\newcommand{\PM}{M}
\newcommand{\charpoly}[1]{\mathcal{X}_{#1}}
\newcommand{\trimat}[5]{
#2 & #1                   \\
#1 & #3 & #1              \\
   & #1 & #4 & #1         \\
   &    & #1 & #5 &\ddots \\
&&&\ddots&\ddots
}
\newcommand{\diamat}[4]{
#1                \\
   & #2 &         \\
   &    & #3 &    \\
   &    &    & #4 \\
&&&&\ddots
}
\DeclareRobustCommand{\lrev}{\texorpdfstring{\reflectbox{\textup{L}}}{L}}
\newcommand{\setst}{\colon}
\newcommand{\sinB}[2][]{\sin#1\left(#2\right)}
\newcommand{\cosB}[2][]{\cos#1\left(#2\right)}
\newcommand{\ceil}[1]{\left\lceil#1\right\rceil}
\newcommand{\kdelta}[2]{\delta_{#1,#2}}
\newtheorem{theo}{Theorem}[section]
\newtheorem{lemma}[theo]{Lemma}
\newtheorem{coro}[theo]{Corollary}
\newtheorem{prop}[theo]{Proposition}
\theoremstyle{definition}
\newtheorem{exam}[theo]{Example}
\newtheorem{rem}[theo]{Remark}
\newtheorem{defn}[theo]{Definition}
\newtheorem{maintheo}{Theorem}
\title{Snake graphs and their characteristic polynomials}
\author{J. P. Bradshaw, P. Lampe, D. Ziga}
\date{25 October 2019}
\keywords{Snake Graphs, Perfect Matchings, Fibonacci Numbers, Characteristic Polynomials, Chebyshev Polynomials, Continued Fractions}
\address{School of Mathematics, Statistics and Actuarial Science (SMSAS),
Sibson Building,
Parkwood Road,
University of Kent,
Canterbury CT2 7FS}
\begin{document}

\begin{abstract}
    The aim of the article is to understand the combinatorics of snake graphs by means of linear algebra. In particular, we apply Kasteleyn's and Temperley--Fisher's ideas about spectral properties of weighted adjacency matrices of planar bipartite graphs to snake graphs. First we focus on snake graphs whose set of turning vertices is monochromatic. We provide recursive sequences to compute the characteristic polynomials; they are indexed by the upper or the lower boundary of the graph and are determined by a neighbour count. As an application, we compute the characteristic polynomials for \lrev-shaped snake graphs and staircases in terms of Fibonacci product polynomials. Next, we introduce a method to compute the characteristic polynomials as convergents of continued fractions. Finally, we show how to transform a snake graph with turning vertices of two colours into a graph with the same number of perfect matchings to which we can apply the results above.
\end{abstract}

\maketitle

\section{Overview}

Snake graphs are planar bipartite graphs. They are created by putting together square pieces such that each piece is either right or above its predecessor.

Snake graphs are considered in the theory of cluster algebras where they are used to write down a formula for cluster variables in surface cluster algebras, see Musiker--Schiffler~\cite{MS10}, Musiker--Schiffler--Williams~\cite{MSW11}, and Propp~\cite{P05}. The summands in the formula are parametrised by perfect matchings of the given snake graph. \c{C}anak\c{c}{\i}--Schiffler~\cite{CS13,CS15,CS17} have studied snake graphs profoundly and noticed that the enumeration of perfect matchings is related to convergents of continued fractions.

In physics, perfect matchings are known as dimer models and Kasteleyn's \cite{K61,K63} and Temperley--Fisher's \cite{TF61} work on dimer statistics has led to useful counting formulas for perfect matchings. Specifically, we can assign weights to the edges of a planar bipartite graph and then read off the number of perfect matchings from the determinant of the weighted adjacency matrix.

Authors often emphasise the importance of spectral properties in graph theory. In particular, the determinant of a weighted adjacency matrix of a snake graph, and hence the number of perfect matchings, can be expressed as a product of the eigenvalues.

We are interested in the characteristic polynomials of weighted adjacency matrices of snake graphs. A special case of a theorem about grid graphs that Temperley--Fisher \cite{TF61} and Kasteleyn~\cite{K61,K63} discovered in independent work allows us to determine the characteristic polynomials for horizontal snake graphs. Here, the horizontal snake graph $H_n$ is created by putting together $n$ square tiles together in horizontal direction. The polynomials can be expressed in terms of the Fibonacci product polynomials $(P_r)_{r\geq 1}$ and $(Q_r)_{r\geq 1}$. Specifically, the characteristic polynomials turn out to be even and if we substitute $x=t^2$, then we may write $\charpoly{H_n}(t)=P_{r}(x)^2$ if $n=2r-1$ is odd, and  $\charpoly{H_n}(t)=(x-1)Q_r(x)^2$ if $n=2r$ is even. The first aim of the article is to express the characteristic polynomials for other interesting classes of snake graphs via Fibonacci product polynomials.

\begin{maintheo}[Theorem~\ref{Theo:L}, Theorem~\ref{Theo:Staircase}]
\begin{itemize}
    \item[(a)]
The \lrev-shaped snake graphs $\lrev_{r,s}$, which are created by placing first $r$ tiles to the right of each other and then $s$ tiles on top of each other, have the characteristic polynomials
\begin{align*}
\charpoly{\lrev_{2m,2n}}&=\left[Q_{m+n-1}-Q_{m-1}Q_{n-1}\right]\left[(x-1)Q_{m+n}+P_mP_n\right],\\
\charpoly{\lrev_{2m+1,2n+1}}&=\left[(x-1)Q_{m+n}-P_{m}P_{n}\right]\left[Q_{m+n+1}+Q_{m}Q_{n}\right],\\
\charpoly{\lrev_{2m,2n+1}}&=\left[P_{m+n}-Q_{m-1}P_{n}\right]\left[P_{m+n+1}+P_{m}Q_{n}\right].
\end{align*}
\newcommand{\tmpvarB}{\left(\left(x-3\right)^2\right)}
\item[(b)] The staircases $S_{m,3}$, which change direction from horizontal to vertical, or vice versa, at every other square tile, have characteristic polynomials
\begin{align*}
      \charpoly{S_{2k,3}}&=(x-2)Q_k\tmpvarB\left[(x-2)Q_k\tmpvarB+xQ_{k-1}\tmpvarB\right],\\
    \charpoly{S_{2k+1,3}}&=\left[Q_{k+1}\tmpvarB+(x-1)Q_k\tmpvarB\right]^2.
\end{align*}
\end{itemize}
\end{maintheo}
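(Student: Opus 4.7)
The plan is to apply the boundary-indexed recursive sequences established earlier in the paper for snake graphs with monochromatic turning vertices. Both $\lrev_{r,s}$ (which has a single turning vertex) and $S_{m,3}$ (whose turns are all of the same colour) satisfy this hypothesis, so the characteristic polynomial can be computed by iterating a three-term linear recurrence along the outer boundary of the snake. Throughout the argument the substitution $x=t^{2}$ is in force, so that everything can be phrased in terms of the Fibonacci product polynomials $P_{r}$ and $Q_{r}$.

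For the $\lrev$-shaped graphs in part (a), I would decompose $\lrev_{r,s}$ into a horizontal arm of $r$ tiles and a vertical arm of $s$ tiles meeting at the corner tile. Along each straight arm the boundary recursion coincides with the one that produces $\charpoly{H_{n}}$, so the partial determinants along the horizontal (respectively vertical) arm are the polynomials $P_{r}$, $Q_{r}$ (respectively $P_{s}$, $Q_{s}$), chosen according to the parity of the arm's length. At the corner the recursion performs one extra coupling step, and the output is that the total determinant factors as a product of two ``partial'' polynomials --- one formed from the boundary data immediately before the turn, the other immediately after. Splitting on the three parity combinations $(r,s)=(2m,2n)$, $(2m+1,2n+1)$ and $(2m,2n+1)$, with $(2m+1,2n)$ obtained from the reflection symmetry $\lrev_{r,s}\cong\lrev_{s,r}$, produces the three cases in the statement.

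For the staircases in part (b), turning vertices occur more densely, so I would pair consecutive tiles into ``steps'' and rewrite the boundary recursion as a recurrence over steps rather than over tiles. A direct calculation per step shows that the transfer quantity is a polynomial in $(x-3)^{2}$, which explains why the answer is an evaluation of $Q_{k}$ and $Q_{k-1}$ at $(x-3)^{2}$, with the number of steps matching the index. The two parities of $m$ distinguish whether the staircase terminates mid-step or at the end of a complete step: the even case $m=2k$ gives an asymmetric two-factor form led by $(x-2)Q_{k}((x-3)^{2})$, while the odd case $m=2k+1$ leaves the recursion in a symmetric state and so produces a perfect square.

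The main obstacle is the last step in each case: recognising the recursively produced polynomial as the stated factorisation. The recursion naturally outputs sums and differences of products of $P$'s and $Q$'s, and rearranging these into the quoted two-factor products requires convolution identities of the form $P_{m+n}=P_{m}Q_{n-1}+Q_{m-1}P_{n}$ together with the analogous addition and subtraction formulas for $Q$. Deriving and combining these identities so that each parity case collapses to the claimed product is the delicate combinatorial piece of the proof; once they are in hand, the remaining ingredients --- the boundary recursion and the known formula for $\charpoly{H_{n}}$ --- feed in more or less mechanically.
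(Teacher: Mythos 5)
There is a genuine gap in part (a), and it propagates to part (b): you have misidentified where the two-factor product in the statement comes from. You write that at the corner ``the recursion performs one extra coupling step, and the output is that the total determinant factors as a product of two partial polynomials --- one formed from the boundary data immediately before the turn, the other immediately after.'' That is not what happens, and no such factorisation of a single boundary recursion exists. The actual mechanism is the block structure of $BB^T$: because all turns are black, black vertices on the upper boundary and black vertices on the lower boundary share no white neighbour with a nonzero contribution (the paper's Proposition~\ref{Prop:NoInteraction}), so $BB^T$ is block diagonal with tridiagonal blocks $B_1$ and $B_2$ indexed by the \emph{upper} and \emph{lower} boundary path graphs (Proposition~\ref{Prop:Tridiagonal}), and $\charpoly{BB^T}=\charpoly{B_1}\charpoly{B_2}$. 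Each of the two bracketed factors in the theorem is the characteristic polynomial of one entire boundary component; running the three-term recursion along one boundary of $\lrev_{r,s}$ produces a \emph{sum} such as $Q_{r+s-1}\mp Q_{r-1}Q_{s-1}$ (one factor), not a product. Notice that each factor already mixes the indices of both arms, which is incompatible with your ``before the turn / after the turn'' splitting. The same omission explains the square in $\charpoly{S_{2k+1,3}}$: it is a square because the two boundary blocks happen to have equal characteristic polynomials, not because ``the recursion is left in a symmetric state.''

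A secondary issue: the convolution identities you propose as the delicate final step, e.g.\ $P_{m+n}=P_{m}Q_{n-1}+Q_{m-1}P_{n}$, are not needed and the one quoted is false (test it at $m=n=1$ after the substitution $P_k(2y+3)=U_k(y)+U_{k-1}(y)$, $Q_k(2y+3)=U_k(y)$). The paper instead uses the expansion $\charpoly{M_n}=Q_n+\sum_{k=0}^{n-1}(3-e_k)\charpoly{M_k}\,Q_{n-k-1}$ (Lemma~\ref{Lemma:TridiagFib}); since the diagonal entries along an \lrev-boundary equal $3$ except at the first vertex, the turn, and the last vertex, the sum has at most three terms and the indices $m+n$ appear automatically, after which only the two elementary relations $P_k=Q_k+Q_{k-1}$ and $(x-1)Q_k=P_{k+1}+P_k$ are required. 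Your outline for the staircases (pairing tiles into steps and obtaining a recursion in $(x-3)^2$) does match the paper's computation of the individual boundary factors, but it still needs the block decomposition to assemble them into the stated products.
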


Let us colour the vertices of a snake graph in the two sets of the bipartition black and white. We call a square tile where the graph changes direction from horizontal to vertical, or vice versa, a turning tile. Every turning tile has a vertex with exactly $2$ neighbours and a vertex with exactly $4$ neighbours (opposite to each other) which we call turns.

Every rational number $x$ admits an essentially unique continued fraction expansion. Namely, we write $x$ as the sum of its integer part $\lfloor x\rfloor$ and its fractional part $x-\lfloor x\rfloor$ and iterate the procedure with the reciprocal of the fractional part. \c{C}anak\c{c}{\i}--Schiffler~\cite{CS17b} indicate a connection between snake graphs and continued fractions. For instance, the numerator of the rational number whose continued fraction expansion is encoded by the sign sequence of the snake graph equals the number of perfect matchings of the graph.

The rational number $x$ admits a second continued expansion. Namely, we write $x$ as the difference between the ceiling $\lceil x\rceil$ and the number $\lceil x\rceil-x\in [0,1)$ with whose reciprocal we iterate the process. The significance of the second method in the context of triangulated surfaces was first noted by Morier-Genoud--Ovsienko~\cite{MO19}.

\begin{maintheo}[Theorem~\ref{Theo:Numerator}]
\label{Theo:OverviewB}
Let $G$ be a snake graph whose turns are all black. For any Kasteleyn weighting the characteristic polynomial of the weighted adjacency matrix $A$ can be written as $\charpoly{A}(t)=\charpoly{B_1}(t^2)\charpoly{B_2}(t^2)$
where $\charpoly{B_1}(x)=p(x)$ is the numerator of the convergent of the continued fraction
\begin{align*}
    [[c_1(x),\ldots,c_k(x)]]=c_1(x)-\cfrac{1}{c_2(x) -\cfrac{1}{c_3(x) -\cfrac{1}{\ddots - \cfrac{1}{c_k(x)}}}}=\frac{p(x)}{q(x)}.
\end{align*}
Here, $1,2,\ldots,k$ is an enumeration of the black vertices in the upper boundary of $G$ and for every $l$ we put $c_l(x)=x-e_l$ where $e_l$ is the number of (white) vertices adjacent to $l$. The polynomial $\charpoly{B_2}(x)$ is constructed similarly using the vertices in the lower boundary.
\end{maintheo}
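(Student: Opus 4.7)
The plan is to reduce the characteristic polynomial of $A$ to that of a smaller symmetric matrix $BB^{\top}$ via the bipartite structure, then to exploit the Kasteleyn condition together with the hypothesis that every turn is black to decompose $BB^{\top}$ into two tridiagonal blocks indexed by the upper and lower boundary, and finally to identify each block's characteristic polynomial with the numerator of the corresponding continued fraction.

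First, I would write $A=\bigl(\begin{smallmatrix}0 & B\\ B^{\top}&0\end{smallmatrix}\bigr)$ in the black--white bipartition. Since a snake graph with $n$ tiles admits a perfect matching, its bipartition is balanced and $B$ is square of size $n+1$, so the Schur complement identity yields $\charpoly{A}(t)=\det(t^{2}\idmat-BB^{\top})$. The $(b,b')$-entry of $BB^{\top}$ is the sum $\sum_{w}B_{b,w}B_{b',w}$ over common white neighbours of the black vertices $b$ and $b'$, so the problem reduces to a combinatorial analysis of common white neighbours of pairs of black vertices.

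Second, I would prove that $BB^{\top}$ is block diagonal with respect to the partition of the black vertices into upper and lower boundary. The combinatorial ingredient is that two distinct black vertices in a snake graph share at most two common white neighbours, and share exactly two precisely when they form the black diagonal of a single tile. Under the ``all turns black'' hypothesis, the black diagonal of every tile has one vertex on the upper boundary and one on the lower: in a non-turning tile this is immediate from its placement between the two boundary paths, while in a turning tile the black diagonal coincides with the turn diagonal, and a traversal of the boundary cycle around the corner places the $4$-valent inner turn and the $2$-valent outer turn on opposite paths. For any cross-boundary pair $(b,b')$ with common white neighbours $w_{1},w_{2}$, the Kasteleyn relation $B_{b,w_{1}}B_{b',w_{1}}B_{b',w_{2}}B_{b,w_{2}}=-1$ on the corresponding tile then forces $(BB^{\top})_{b,b'}=B_{b,w_{1}}B_{b',w_{1}}+B_{b,w_{2}}B_{b',w_{2}}=0$, so $BB^{\top}$ is block diagonal with blocks $M_{1},M_{2}$ indexed by the upper and lower boundary black vertices.

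Third, within each boundary the same dichotomy forces $M_{i}$ to be tridiagonal with diagonal entries $e_{l}$ and off-diagonal entries $\pm 1$, since two black vertices on the same boundary can share a common white neighbour only when they are consecutive along the path, and then the unique shared neighbour is the white vertex between them. Expanding $\det(x\idmat-M_{i})$ along the last row produces the three-term recurrence $(x-e_{k})\det(x\idmat-(M_{i})_{k-1})-\det(x\idmat-(M_{i})_{k-2})$, which is precisely the recurrence for the numerators $p_{k}(x)$ of the convergents of $[[c_{1},\ldots,c_{k}]]$ with $c_{l}=x-e_{l}$. Identifying $M_{i}$ with $B_{i}$ and substituting $x=t^{2}$ gives $\charpoly{A}(t)=\charpoly{B_{1}}(t^{2})\charpoly{B_{2}}(t^{2})$. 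The main obstacle is the geometric claim used in the second step: one must verify carefully that the ``all turns black'' hypothesis forces the black diagonal of every turning tile to straddle the two boundary paths once the snake's two end-edges are removed, since relaxing this hypothesis would place both black diagonal vertices at a white turn on the same boundary, breaking both the block-diagonal structure of $BB^{\top}$ and the clean factorisation into two continued fractions.
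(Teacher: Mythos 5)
Your proposal is correct and follows essentially the same route as the paper: reduce $\charpoly{A}(t)$ to $\det(t^2\idmat-BB^T)$ via the Schur complement, use the Kasteleyn condition on each tile together with the all-turns-black hypothesis to show $BB^T$ is block diagonal with tridiagonal blocks indexed by the two boundary paths (the paper's Propositions~\ref{Prop:NoInteraction} and~\ref{Prop:Tridiagonal}), and match the three-term recurrence of the characteristic polynomials with that of the continued-fraction numerators. The only (harmless) difference is that you identify $\charpoly{l}=c_l\charpoly{l-1}-\charpoly{l-2}$ directly with the numerator recurrence of the \emph{minus} continued fraction $[[c_1,\ldots,c_k]]$, whereas the paper rewrites $[[c_1,\ldots,c_k]]$ as the ordinary continued fraction $[c_1,-c_2,\ldots,(-1)^{k+1}c_k]$ and must then track a $4$-periodic sign $\varepsilon(l)$ relating $\charpoly{l}$ to the convergent numerators $p_l$; your version avoids that bookkeeping and determines the numerator exactly rather than up to sign.
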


The polynomial $\charpoly{B_1}(x)$ is the characteristic polynomial of a matrix $B_1$ that describes interactions between the black vertices of the upper boundary and the white vertices of $G$. Theorem~\ref{Theo:OverviewB} implies $\charpoly{A}(0)=\pm \operatorname{det}(B_1)\operatorname{det}(B_2)$. The absolute value of this expression is equal to the square of the number of perfect matchings of $G$ thanks to the Theorem of Temperley--Fisher~\cite{TF61} and Kasteleyn~\cite{K61,K63}. We prove that it is enough to consider one of the boundary components to count matchings.

\begin{maintheo}[Special case of Theorem~\ref{Theo:Matchings}] In the setup of Theorem~\ref{Theo:OverviewB} the characteristic polynomials $\charpoly{B_1}(x)$ and $\charpoly{B_2}(x)$ have the same constant term, except for a possibly different sign. In particular, $\lvert\operatorname{det}(B_1)\rvert=\lvert\operatorname{det}(B_2)\rvert$ is equal to the number of perfect matchings of $G$.
\end{maintheo}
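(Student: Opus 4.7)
The plan is to combine the factorization $\charpoly{A}(t) = \charpoly{B_1}(t^2)\charpoly{B_2}(t^2)$ of Theorem~\ref{Theo:OverviewB} with the Temperley--Fisher and Kasteleyn identity $\lvert\det A\rvert = m(G)^2$, where $m(G)$ denotes the number of perfect matchings. Setting $t=0$ immediately yields $\lvert\det B_1\rvert\cdot\lvert\det B_2\rvert = m(G)^2$. The real content is the stronger statement $\lvert\det B_1\rvert = \lvert\det B_2\rvert$; the claim that the common value equals $m(G)$ is then automatic.

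To make the matching structure available, I would first record -- using the construction in the proof of Theorem~\ref{Theo:OverviewB} -- that $B_i = K_i^{\mathsf{T}}K_i$, where $K_i$ is the submatrix of the biadjacency matrix $K$ whose columns are indexed by the black vertex set $V_i$ governing the $i$-th factor. The vanishing of the cross block $K_1^{\mathsf{T}}K_2$, which underlies the factorisation of $\charpoly{A}$, follows from the observation that the only way two blacks in different blocks can share white neighbours is as opposite corners of a common tile, and the Kasteleyn sign condition -- valid thanks to the ``all turns black'' hypothesis -- makes the two contributions cancel in pairs. In particular each $B_i$ is positive semidefinite, so $\det B_1\cdot\det B_2 = m(G)^2$ holds without sign ambiguity.

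The crucial combinatorial input will be a tree lemma: the bipartite graph $\Gamma_i$ on $V_i\cup W$ with edges inherited from $G$ is a tree. Indeed, two blacks of a single block share at most one common white -- for a pair sharing two would be opposite corners of a tile and would therefore have been separated into different blocks by the Kasteleyn argument above -- and a direct edge/vertex count then shows that $\Gamma_i$ has exactly one fewer edge than vertex; together with connectedness through the chain of bridging whites this forces $\Gamma_i$ to be a tree. Since matchings in a bipartite tree are unique when they exist, the count $m_i(S)$ of matchings of $V_i$ to a white subset $S$ of cardinality $\lvert V_i\rvert$ lies in $\{0,1\}$. Cauchy--Binet then gives
\begin{equation*}
\det B_i \;=\; \sum_{S}(\det K_i[S])^2 \;=\; \sum_S m_i(S) \;=\; \lvert\{S\setst m_i(S) = 1\}\rvert,
\end{equation*}
since the signed count of a unique matching in a tree equals $\pm 1$. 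Every perfect matching of $G$ restricts uniquely to partial matchings on the two blocks, giving $m(G) = \sum_S m_1(S)\,m_2(W\setminus S) = \lvert A\cap B\rvert$ for $A = \{S\setst m_1(S)=1\}$, $B = \{S\setst m_2(W\setminus S)=1\}$. The identity $\lvert A\rvert\cdot\lvert B\rvert = m(G)^2 = \lvert A\cap B\rvert^2$, together with $\lvert A\cap B\rvert\leq\min(\lvert A\rvert,\lvert B\rvert)$, forces $A = B$, whence $\det B_1 = \det B_2 = m(G)$. The main obstacle will be establishing the tree lemma rigorously; this reduces to a careful local analysis of the snake graph's tile structure near four-neighbour turns, where the ``all turns black'' hypothesis is used most delicately.
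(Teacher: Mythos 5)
Your route is genuinely different from the paper's. The paper obtains $\lvert\det(B_1)\rvert=\PM(G)$ by continued-fraction means: Theorem~\ref{Theo:Numerator} identifies $\charpoly{B_1}(0)$ up to sign with the numerator $p$ of $[[e_1,\ldots,e_k]]$, the Hirzebruch/Morier-Genoud--Ovsienko formula~(\ref{Eqn:TwoContinuedFractions}) together with the triangle-count interpretation of the $e_l$ converts this into the numerator of the sign-sequence continued fraction $[a_1,\ldots,a_n]$, and \c{C}anak\c{c}{\i}--Schiffler's theorem says that this numerator equals $\PM(G)$; the value of $\lvert\det(B_2)\rvert$ is then forced by Kasteleyn's $p^2=\pm\det(B_1)\det(B_2)$, exactly as in your opening paragraph. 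Your substitute for the continued-fraction input --- Cauchy--Binet turning $\det B_i$ into a count of white subsets admitting a (necessarily unique) matching of $V_i$, followed by the squeeze $\lvert A\rvert\,\lvert B\rvert=\lvert A\cap B\rvert^2$ together with $\lvert A\cap B\rvert\leq\min(\lvert A\rvert,\lvert B\rvert)$ forcing $A=B$ --- is correct as a skeleton, is self-contained apart from Theorem~\ref{Thm:Kasteleyn} and the block structure of Proposition~\ref{Prop:Tridiagonal}, and even yields the sharper conclusion $\det B_1=\det B_2=\PM(G)$ with no sign ambiguity. That is an attractive alternative.

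The gap is the tree lemma, which you explicitly defer; it carries the entire combinatorial content of your proof and is precisely where the all-turns-black hypothesis enters, so it cannot be waved through. Two points need repair. First, the justification that two blacks of one block sharing two whites ``would have been separated into different blocks by the Kasteleyn argument'' is circular: the blocks are defined by membership in the upper or lower boundary, not by sign cancellation, so what you must actually prove is the geometric fact that the two black corners of every tile lie on different boundary components. For a non-turning tile this is clear; at a turning tile only one of the two diagonal pairs of corners is split between the two boundaries, namely the pair consisting of the $2$-turn and the $4$-turn, so the fact holds if and only if the turns are black --- this is the honest use of the hypothesis. Second, the assertion that $\Gamma_i$ has exactly one fewer edge than vertex amounts to $\sum_{v\in V_i}\deg_G(v)=\lvert V_i\rvert+\lvert W\rvert-1$, i.e.\ that on each boundary component separately the degree deficits of the two end vertices and the $2$-turns offset the surpluses of the $4$-turns in exactly the right proportion; this is true (the two components' counts sum to $3n+1$, but you need each summand individually), and it wants an induction along the tiles rather than a ``direct count''. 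Connectivity, by contrast, is easy once the first point is settled: every white vertex is a corner of some tile and is joined by two sides of that tile to both of its black corners, one of which lies in $V_i$, and the boundary path links the vertices of $V_i$ to one another. With connectivity and the edge count established, $\Gamma_i$ is a tree and the rest of your argument goes through.
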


Finally, we show how to transform a snake graph with turns of two colours into a graph with the same number of perfect matchings to which we can apply the results above. The transformation is of combinatorial nature and constructed by rotations of subgraphs around tiles.

\section{Snake graphs and their adjacency matrices}

\subsection{Snake graphs} In this paper, unless stated otherwise, graphs are assumed to be simple, that is they are finite, undirected, and there is at most one edge between any two vertices. Recall that a graph is \emph{planar} if it can be drawn in the plane in such a way that edges intersect only at endpoints. A planar graph may admit many different embeddings into the plane; a planar graph together with a fixed embedding is called a \emph{plane} graph.

\begin{defn}[Snake graphs]
\label{Def:SnakeGraph}
\emph A \emph{snake graph} is a plane graph consisting of a sequence of tiles such that each tile is placed to the right or on top of the previous tile, starting from an initial tile. Here each tile is a square with four vertices and four edges and two adjacent tiles share exactly one edge.
\end{defn}

The procedure described in Definition \ref{Def:SnakeGraph} creates a strip of tiles that resembles a snake.

\begin{exam}
\label{Ex:SnakeGraphs}
In this note we are particularly interested in the following classes of snake graphs. Let $m$ and $n$ be natural numbers.
\begin{itemize}
    \item[(i)] The \emph{horizontal snake graph} $H_n$ is created by putting together $n$ square tiles in a horizontal direction. The example $H_6$ is shown on the left of Figure~\ref{Figure:Snake}. The graph $H_n$ is also known as the \emph{ladder graph} in the literature. The \emph{vertical snake graph} $V_n$ is the rotation of $H_n$ by $\pi/2$.
    \item[(ii)] The \emph{\lrev-shaped snake graph} $\lrev_{m,n}$ is created by placing first $m$ tiles to the right of each other and then $n-1$ tiles on top of each other. The snake graph $\lrev_{7,5}$ is shown in the middle of Figure~\ref{Figure:Snake}.
    \item[(iii)] The \emph{staircase} $S_{m,n}$ is the snake graph that we obtain by gluing together horizontal and vertical snake graphs $H_n$ and $V_n$ alternately along their first or last tiles, respectively, using a total number of $m$ pieces. For example, the staircase $S_{4,3}$ arises by gluing together four pieces $H_3$, $V_3$, $H_3$, $V_3$; it is shown in the right of Figure~\ref{Figure:Snake}.
\end{itemize}
\end{exam}

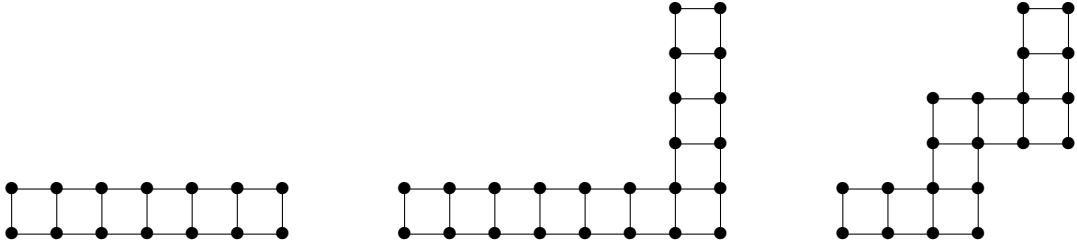
\begin{figure}[ht]
\begin{center}

%%% horizontal snake graph
\begin{tikzpicture}
\newcommand{\x}{0.6cm} %%% side length of a tile
\newcommand{\n}{7} %%% number of tiles +1

\foreach \k in {1,2,...,\n} {
\node at (\k*\x,0) {$\bullet$};
\node at (\k*\x,\x) {$\bullet$};
\path[draw] (\k*\x,0) to (\k*\x,\x);
}

\foreach \k in {2,3,...,\n} {
\path[draw] (\k*\x-\x,0) to (\k*\x,0);
\path[draw] (\k*\x-\x,\x) to (\k*\x,\x);
}

\end{tikzpicture}\hspace{1cm}
%%% L-shaped snake graph
\begin{tikzpicture}
\newcommand{\x}{0.6cm} %%% side length of a tile
\newcommand{\n}{8} %%% of number of horizontal tiles +1
\newcommand{\m}{5} %%% number of vertical tiles +1

\foreach \k in {1,2,...,\n} {
\node at (\k*\x,0) {$\bullet$};
\node at (\k*\x,\x) {$\bullet$};
\path[draw] (\k*\x,0) to (\k*\x,\x);
}

\foreach \k in {2,3,...,\n} {
\path[draw] (\k*\x-\x,0) to (\k*\x,0);
\path[draw] (\k*\x-\x,\x) to (\k*\x,\x);
}

\foreach \k in {2,3,...,\m} {
\node at (\n*\x,\k*\x) {$\bullet$};
\node at (\n*\x-\x,\k*\x) {$\bullet$};
\path[draw] (\n*\x,\k*\x) to (\n*\x-\x,\k*\x);
}

\path[draw] (\n*\x,\x) to (\n*\x,\m*\x);
\path[draw] (\n*\x-\x,\x) to (\n*\x-\x,\m*\x);

\end{tikzpicture}\hspace{1cm}
%%% zig zag
\begin{tikzpicture}
\newcommand{\x}{0.6cm} %%% side length of a tile

\node at (0,0) {$\bullet$};
\node at (0,\x) {$\bullet$};

\node at (\x,0) {$\bullet$};
\node at (\x,\x) {$\bullet$};

\node at (2*\x,0) {$\bullet$};
\node at (2*\x,\x) {$\bullet$};

\node at (3*\x,0) {$\bullet$};
\node at (3*\x,\x) {$\bullet$};

\node at (2*\x,2*\x) {$\bullet$};
\node at (3*\x,2*\x) {$\bullet$};

\node at (2*\x,3*\x) {$\bullet$};
\node at (3*\x,3*\x) {$\bullet$};

\node at (4*\x,2*\x) {$\bullet$};
\node at (4*\x,3*\x) {$\bullet$};

\node at (5*\x,2*\x) {$\bullet$};
\node at (5*\x,3*\x) {$\bullet$};

\node at (4*\x,4*\x) {$\bullet$};
\node at (4*\x,5*\x) {$\bullet$};

\node at (5*\x,4*\x) {$\bullet$};
\node at (5*\x,5*\x) {$\bullet$};

\path[draw] (0,0) to (3*\x,0);
\path[draw] (0,\x) to (3*\x,\x);

\path[draw] (2*\x,0) to (2*\x,3*\x);
\path[draw] (3*\x,0) to (3*\x,3*\x);

\path[draw] (2*\x,2*\x) to (5*\x,2*\x);
\path[draw] (2*\x,3*\x) to (5*\x,3*\x);

\path[draw] (4*\x,2*\x) to (4*\x,5*\x);
\path[draw] (5*\x,2*\x) to (5*\x,5*\x);

\path[draw] (0,0) to (0,\x);
\path[draw] (\x,0) to (\x,\x);

\path[draw] (4*\x,4*\x) to (5*\x,4*\x);
\path[draw] (4*\x,5*\x) to (5*\x,5*\x);

\end{tikzpicture}
\end{center}
\caption{A horizontal snake graph, an \lrev-shaped snake graph and a staircase}
\label{Figure:Snake}

\end{figure}

\subsection{Special vertices and edges in snake graphs}
\label{Subsec:SpecialVertices}

Let $G=(V,E)$ be a snake graph.

\begin{defn}[Turns]
\label{Def:Turns}
A vertex of $G$ is called a \emph{turn} if the following conditions hold.
\begin{itemize}
    \item[(i)] It is adjacent to exactly $2$ other vertices or it is adjacent to exactly $4$ other vertices.
    \item[(ii)] It is neither the lower right, the lower left nor the upper left vertex of the first tile; and it is neither the upper left, the upper right nor the lower right vertex of the last tile.
\end{itemize}
A turn is called a \emph{$2$-turn} if it is adjacent to exactly $2$ vertices and a \emph{$4$-turn} otherwise. A tile of $G$ having exactly two turns as vertices is called a \emph{turning tile}.
\end{defn}

The name \emph{turn} reflects the fact that the snake changes from horizontal direction to vertical direction, or vice versa, at tiles having a $2$-turn and a $4$-turn.

\begin{defn}[Internal and external edges]
\label{Def:Internal}
An edge of $G$ is called \emph{internal} if one of the following conditions holds:
\begin{itemize}
    \item[(i)] it is shared by two adjacent square tiles;
    \item[(ii)] it is the lower side of the first tile;
    \item[(iii)] it is the upper side of the last tile.
\end{itemize}
The edge is called \emph{external} otherwise.
\end{defn}

Hence, with two exceptions, the external edges are the edges that bound the infinite face of the graph. Note that every square tile is bounded by exactly $2$ internal and $2$ external edges. An example is shown on the left in Figure \ref{fig:SpecialEdges}.

We will refer to all edges that bound the infinite face (including two internal edges) as \emph{boundary edges}. 

Suppose that $G$ has at least two tiles. The \emph{start edge} of $G$ is the edge of the first square tile connecting the $2$ vertices that do not belong to any other tile. The \emph{end edge} is defined similarly for the last tile. The subgraph of $G$ formed by the boundary edges is a circular graph $C(G)$. We are interested in the following two full subgraphs of the infinite face.

\begin{defn}[Upper and lower boundary]
\label{Def:UpperLowerBoundary}
If we further remove the start and the end edge from the circular graph $C(G)$, then we obtain a disjoint union of two path graphs which we will call the \emph{upper} and \emph{lower boundary} of $G$. Here, the path graph containing the lower left vertex of the first tile is the upper boundary if the next tile is above the first tile, otherwise it is the lower boundary. Vertices in the upper or lower boundary are called \emph{upper} and \emph{lower} vertices, respectively.
\end{defn}

We can extend Definition~\ref{Def:UpperLowerBoundary} to the snake graph with $1$ tile by declaring the upper boundary to be the upper edge and the lower boundary to be the lower edge.

An example is shown on the right in Figure \ref{fig:SpecialEdges}.

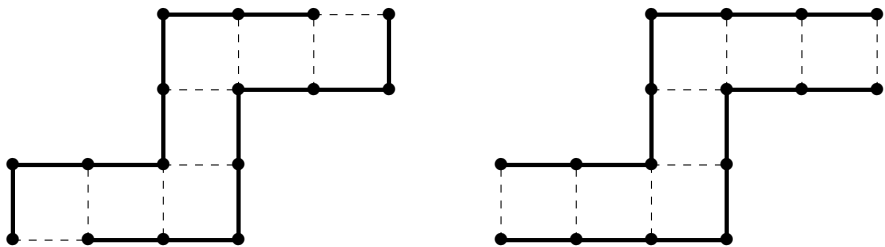
\begin{figure}

\begin{center}
\begin{tikzpicture}

\pgfdeclarelayer{background}
\pgfdeclarelayer{foreground}
\pgfsetlayers{background,main,foreground}

\definecolor{c1}{RGB}{150,150,150}

\newcommand{\x}{1cm} %%% side length of a tile
\newcommand{\R}{0.2pt}  %%% size of bullet
\newcommand{\s}{0.5cm}  %%% shift of labels
\newcommand{\sos}{0.22cm}  %%% shift of labels (signs)

\draw (0,0) node {$\bullet$};
\draw (0,\x) node {$\bullet$};

\draw (\x,0) node {$\bullet$};
\draw (\x,\x) node {$\bullet$};

\draw (2*\x,0) node {$\bullet$};
\draw (2*\x,\x) node {$\bullet$};

\draw (3*\x,0) node {$\bullet$};
\draw (3*\x,\x) node {$\bullet$};

\draw (2*\x,2*\x) node {$\bullet$};
\draw (3*\x,2*\x) node {$\bullet$};

\draw (2*\x,3*\x) node {$\bullet$};
\draw (3*\x,3*\x) node {$\bullet$};

\draw (4*\x,2*\x) node {$\bullet$};
\draw (4*\x,3*\x) node {$\bullet$};

\draw (5*\x,2*\x) node {$\bullet$};
\draw (5*\x,3*\x) node {$\bullet$};

\begin{pgfonlayer}{background}
\path[draw,dashed] (0,0) to (\x,0);
\path[draw,ultra thick] (\x,0) to (3*\x,0);
\path[draw,ultra thick] (0,\x) to (2*\x,\x);
\path[draw,dashed] (2*\x,\x) to (3*\x,\x);

\path[draw,dashed] (2*\x,\x) to (2*\x,0);
\path[draw,ultra thick] (2*\x,\x) to (2*\x,3*\x);
\path[draw,ultra thick] (3*\x,0) to (3*\x,2*\x);
\path[draw,dashed] (3*\x,2*\x) to (3*\x,3*\x);

\path[draw,ultra thick] (3*\x,2*\x) to (5*\x,2*\x);
\path[draw,dashed] (2*\x,2*\x) to (3*\x,2*\x);
\path[draw,ultra thick] (2*\x,3*\x) to (4*\x,3*\x);
\path[draw,dashed] (4*\x,3*\x) to (5*\x,3*\x);

\path[draw,dashed] (4*\x,2*\x) to (4*\x,3*\x);
\path[draw,ultra thick] (5*\x,2*\x) to (5*\x,3*\x);

\path[draw,ultra thick] (0,0) to (0,\x);
\path[draw,dashed] (\x,0) to (\x,\x);

\end{pgfonlayer}

\end{tikzpicture}\hspace{1cm}\begin{tikzpicture}

\pgfdeclarelayer{background}
\pgfdeclarelayer{foreground}
\pgfsetlayers{background,main,foreground}

\definecolor{c1}{RGB}{150,150,150}

\newcommand{\x}{1cm} %%% side length of a tile
\newcommand{\R}{0.2pt}  %%% size of bullet
\newcommand{\s}{0.5cm}  %%% shift of labels
\newcommand{\sos}{0.22cm}  %%% shift of labels (signs)

\draw (0,0) node {$\bullet$};
\draw (0,\x) node {$\bullet$};

\draw (\x,0) node {$\bullet$};
\draw (\x,\x) node {$\bullet$};

\draw (2*\x,0) node {$\bullet$};
\draw (2*\x,\x) node {$\bullet$};

\draw (3*\x,0) node {$\bullet$};
\draw (3*\x,\x) node {$\bullet$};

\draw (2*\x,2*\x) node {$\bullet$};
\draw (3*\x,2*\x) node {$\bullet$};

\draw (2*\x,3*\x) node {$\bullet$};
\draw (3*\x,3*\x) node {$\bullet$};

\draw (4*\x,2*\x) node {$\bullet$};
\draw (4*\x,3*\x) node {$\bullet$};

\draw (5*\x,2*\x) node {$\bullet$};
\draw (5*\x,3*\x) node {$\bullet$};

\begin{pgfonlayer}{background}
\path[draw,ultra thick] (0,0) to (3*\x,0);
\path[draw,ultra thick] (0,\x) to (2*\x,\x);
\path[draw,dashed] (2*\x,\x) to (3*\x,\x);

\path[draw,dashed] (2*\x,\x) to (2*\x,0);
\path[draw,ultra thick] (2*\x,\x) to (2*\x,3*\x);
\path[draw,ultra thick] (3*\x,0) to (3*\x,2*\x);
\path[draw,dashed] (3*\x,2*\x) to (3*\x,3*\x);

\path[draw,ultra thick] (3*\x,2*\x) to (5*\x,2*\x);
\path[draw,dashed] (2*\x,2*\x) to (3*\x,2*\x);
\path[draw,ultra thick] (2*\x,3*\x) to (5*\x,3*\x);

\path[draw,dashed] (4*\x,2*\x) to (4*\x,3*\x);
\path[draw,dashed] (5*\x,2*\x) to (5*\x,3*\x);

\path[draw,dashed] (0,0) to (0,\x);
\path[draw,dashed] (\x,0) to (\x,\x);

\end{pgfonlayer}

\end{tikzpicture}

\end{center}

\caption{The external edges (left) and the upper and lower boundary (right)}
\label{fig:SpecialEdges}
\end{figure}

\subsection{Perfect matchings and continued fractions}

Suppose $G=(V,E)$ is a simple graph with vertex set $V$ and edge set $E$.

\begin{defn}[Perfect matchings]
A \emph{perfect matching} of the graph $G$ is a subset $P\subseteq E$ of the set of edges such that every vertex of the graph is incident to exactly one edge in $P$. 
\end{defn}

Perfect matchings are also known as \emph{dimer models} in the literature. We denote the number of perfect matchings of $G$ by $\PM(G)$.

\begin{defn}[Fibonacci numbers]
The \emph{Fibonacci numbers} $(F_n)_{n\geq 0}$ are given by the sequence $F_0 = 0$, $F_1 = 1$ and $F_n = F_{n-1} + F_{n-2}$ for $n \geq 2$.
\end{defn}

The first Fibonacci numbers are shown in Figure \ref{fig:Fibonacci}.

\begin{exam}
\label{Ex:Fibonacci}
It is well-known and easy to show that the horizontal snake graph satisfies $\PM(H_n)=F_{n+2}$.
\end{exam}

\begin{figure}
    \centering
\begin{center}
\begin{tabular}{|c||c|c|c|c|c|c|c|c|c|c|c|}\hline
     $n$ & $0$ & $1$ & $2$ & $3$ & $4$ & $5$ & $6$ & $7$ & $8$ & $9$ & $10$\\\hline
     $F_n$ & $0$ & $1$ & $1$ & $2$ & $3$ & $5$ & $8$ & $13$ & $21$ & $34$ & $55$ \\\hline
\end{tabular}
\end{center}
    \caption{The Fibonacci numbers}
    \label{fig:Fibonacci}
\end{figure}

More generally, the number of perfect matchings of any snake graph can directly be given using continued fractions thanks to the work of \c{C}anak\c{c}{\i}--Schiffler~\cite{CS17b}. Given a snake graph, we place a sign on every internal edge and with the sign sequence we associate a sequence of natural numbers $(a_1,a_2,\ldots,a_n)$ by counting consecutive entries in the sequence
\begin{align*}
(\underbrace{+,\ldots,+}_{a_1}\,,\,\underbrace{-,\ldots,-}_{a_2}\,,\,\underbrace{+,\ldots,+}_{a_3}\,,\ldots).
\end{align*}
We refer the reader to Schiffler's overview article \cite{S19} for precise details of the construction of the sign sequence. Then we consider the continued fraction
\begin{align}
\label{Eqn:CFrac}
[a_1,a_2,\ldots,a_n ]=a_1+\cfrac{1}{a_2 +\cfrac{1}{a_3 +\cfrac{1}{\ddots + \cfrac{1}{a_n}}}}.
\end{align}
Let us write the continued fraction as the rational number
\begin{align}
\label{Eqn:Expansion}
[a_1,a_2,\ldots,a_n]=\frac{p}{q}
\end{align}
with coprime integers $p,q\geq 1$. Then the numerator $p$ gives us the number of perfect matchings of the snake graph. For example, the sign sequence of the horizontal snake graph $H_n$ is given by $a_1=a_{n-1}=2$ and $a_2=\ldots=a_{n-2}=1$.

\begin{rem}
\label{Rem:Unique}
The continued fraction expansion of the rational number $p/q$ in Equation (\ref{Eqn:Expansion}) is essentially unique. More precisely, given numbers $a_i \geq 1$ with $1\leq i\leq n$ and $b_i \geq 1$ with $1\leq i\leq m$ such that $[a_1,a_2,\ldots,a_n]= [b_1,b_2,\ldots,b_m]$. Then exactly one of the two cases holds.
\begin{itemize}
\item[(i)] We have $n=m$ and $a_i=b_i$ for all $1\leq i\leq n$.
\item[(ii)] We have $\lvert n-m\rvert =1$; without loss of generality let us assume $m=n+1$. Then the equality $a_i=b_i$ holds for all $1\leq i\leq n-1$ and the last entries satisfy $b_{n}=a_{n}-1$ and $b_{n+1}=1$.
\end{itemize}
It follows that the continued fraction expansion of a rational number as in Equation (\ref{Eqn:CFrac}) becomes unique if we impose the condition $n \equiv 0 \operatorname{mod} 2$. Notice that if the sequence $(a_i)$ is constructed from a snake graph $G$, then the sequence $(b_i)$ in case (ii) arises by using the sign of the right edge of the last tile instead of the sign of the upper edge.
\end{rem}

\newcommand\myeq{\stackrel{\mathclap{\normalfont\tiny \mbox{def}}}{=}}

The rational number $p/q$ admits another continued fraction expansion
\begin{align}
\label{Eqn:NegConFrac}
\frac{p}{q}=[[c_1,c_2,\ldots,c_k ]]&\myeq c_1-\cfrac{1}{c_2 -\cfrac{1}{c_3 -\cfrac{1}{\ddots - \cfrac{1}{c_k}}}},
\end{align}
which is unique if $c_i\geq 2$ for all $i\geq 1$. Note that
\begin{align*}
[[c_1,c_2,\ldots,c_k ]]=[c_1,-c_2,c_3,\ldots,(-1)^{k+1}c_k].
\end{align*}
Morier-Genoud and Ovsienko \cite{MO19}, based on an observation by Hirzebruch ~\cite[Equation (19)]{H73}, note that (assuming $n$ is even) both expansions are related by the formula
\begin{align}
\label{Eqn:TwoContinuedFractions}
(c_1,\ldots,c_k)=
\big(a_1+1,\underbrace{2,\ldots,2}_{a_2-1},\,
a_3+2,\underbrace{2,\ldots,2}_{a_4-1},a_5+2,\ldots,
a_{n-1}+2,\underbrace{2,\ldots,2}_{a_{n}-1}\big).
\end{align}

\subsection{Weighted adjacency matrices}

As before, let $G=(V,E)$ be a simple graph. We denote by $m=\lvert V\rvert$ the cardinality of the vertex set.

\begin{defn}[Weightings]
A map $w\colon E\to \{-1,1\}$, which assigns a weight $e\mapsto w(e)$ to every edge $e\in E$, is called a \emph{weighting} of the graph $G$.
\end{defn}

More generally, some authors consider weightings $e\colon E\to R$ with values in a general set $R$. In the context of Kasteleyn's and Temperley--Fisher's Theorem, which we will apply to count perfect matchings in bipartite graphs, weightings with real or complex numbers of absolute value $1$ are considered. In this article, we are mostly interested in real weightings, so we assume $R=\{-1,1\}$.

\begin{defn}[Weighted adjacency matrices] Fix a labelling $V=\{v_0,v_1,\ldots,v_{m-1}\}$ of the set of vertices.
\begin{itemize}
\item[(i)]
 The \emph{adjacency matrix} is the $m\times m$ matrix $Z=(z_{ij})_{0\leq i,j\leq m-1}\in \operatorname{Mat}_{m\times m}(\mathbb{Z})$ with entries
\begin{align*}
    z_{ij}=\begin{cases}
    1,&\textrm{if $i$ and $j$ are connected by an edge};\\
    0,&\textrm{if $i$ and $j$ are not connected by an edge}.
    \end{cases}
\end{align*}
\item[(ii)] Given a weighting $w\colon E\to \{\pm 1\}$, the \emph{weighted adjacency matrix} is the $m\times m$ matrix $A=(a_{ij})_{0\leq i,j\leq m-1}\in \operatorname{Mat}_{m\times m}(\mathbb{Z})$ with entries
\begin{align*}
    a_{ij}=\begin{cases}
    w(e),&\textrm{if $i$ and $j$ are connected by an edge $e$};\\
    0,&\textrm{if $i$ and $j$ are not connected by an edge}.
    \end{cases}
\end{align*}
\end{itemize}
\end{defn}

By construction, the adjacency matrix and the weighted adjacency matrix are symmetric matrices for every $G$ and every weighting $w$. In particular, $A$ will be diagonalisable and all eigenvalues will be real numbers. The eigenvalues of $A$ do not depend on the choice of the ordering of the vertices.

\subsection{The theorem of Temperley--Fisher and Kasteleyn}
\label{Subsec:Kasteleyn}

Recall the following definition.

\begin{defn}[Bipartite graphs]
A graph $G=(V,E)$ is called \emph{bipartite} if we can divide the vertices into two disjoint sets $V_1$ and $V_2$ such that every edge connects a vertex in $V_1$ to a vertex in $V_2$.
\end{defn}

We often call a vertex in $V_1$ \emph{black} and a vertex in $V_2$ \emph{white}. Note that snake graphs are bipartite, see Figure \ref{fig:weighting}.

\begin{defn}[Bipartite orders]
 Given a bipartite graph $G=(V_1\sqcup V_2,E)$, we abbreviate $m_i=\lvert V_i\rvert$ for $i\in \{1,2\}$. The pair $(m_1,m_2)$ is called the \emph{order} of the bipartite graph $G$.
\end{defn}

Let us fix a plane bipartite graph $G=(V_1\sqcup V_2,E)$ together with a weighting $w\colon E\to \{\pm 1\}$ for the rest of the section. Recall that the embedding of $G$ divides the plane into regions which are called \emph{faces}. There is one unbounded face which is called the \emph{infinite face}.

\begin{defn}[Kasteleyn weightings]
\label{Def:Weighting}
 The weighting $w$ is called a \emph{Kasteleyn weighting} if each face (including the infinite face) having the shape of a $k$-gon has$\ldots$
 \begin{itemize}
 \item[(i)] an odd number of negative signs if $k \equiv 0 \operatorname{mod} 4$;
 \item[(ii)] an even number of negative signs if $k \equiv 2
 \operatorname{mod} 4$.
 \end{itemize}
\end{defn}

It can be shown that every plane bipartite graph admits a Kasteleyn weighting. To construct weightings for snake graphs we distinguish external and internal edges, see Definition~\ref{Def:Internal}.

\begin{defn}[Real weightings for snake graphs]
\label{Def:ConstructionRealWeighting}
Given a snake graph $G=(V,E)$, we set $w(e)=1$ for every edge $e$ in the upper boundary and every edge in the lower boundary and we alternate the signs of the remaining edges, starting with $+1$ on the start edge.
\end{defn}

An example is shown in Figure \ref{fig:weighting}.

\begin{prop}
The weighting $w$ from Definition \ref{Def:ConstructionRealWeighting} is a Kasteleyn weighting.
\end{prop}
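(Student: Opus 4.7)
The plan is to verify each of the two cases in Definition~\ref{Def:Weighting} separately for the square-tile faces — all of which are $4$-gons — and for the single infinite face. First I would record the structure of the constructed weighting: the $n+1$ edges of $G$ that do not lie in the upper or lower boundary form a natural sequence
\begin{align*}
s,\; f_1,\; f_2,\; \ldots,\; f_{n-1},\; e,
\end{align*}
where $s$ is the start edge, $e$ is the end edge, and $f_i$ is the edge shared between the $i$-th and the $(i+1)$-th tile. By Definition~\ref{Def:ConstructionRealWeighting} the sign of the $j$-th entry of this sequence is $(-1)^{j-1}$, while every other edge carries weight $+1$.

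The first step is then a bookkeeping argument for each square tile $T_i$. Out of its four bounding edges, precisely two lie in the upper or lower boundary (and therefore carry weight $+1$), while the remaining two are two \emph{consecutive} members of the sequence above: namely $s$ and $f_1$ for $T_1$, the pair $f_{i-1}$ and $f_i$ for an interior tile with $1<i<n$, and $f_{n-1}$ and $e$ for $T_n$. Since consecutive entries of the sequence have opposite signs, each square face contains exactly one negative edge, which is odd, in accordance with Definition~\ref{Def:Weighting}(i) for a $4$-gon.

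It remains to handle the infinite face. A direct count using Definitions~\ref{Def:Internal} and~\ref{Def:UpperLowerBoundary} shows that its boundary cycle $C(G)$ has $2n+2$ edges, namely the $2n$ edges of the upper and lower boundary together with $s$ and $e$. The upper and lower boundary contribute no negative signs, so the total number of negative signs on the infinite face equals the number of negative signs among $\{s,e\}$; this is $0$ if $n$ is even and $1$ if $n$ is odd. The residue of $2n+2$ modulo $4$ is $2$ when $n$ is even and $0$ when $n$ is odd, so these parities line up with cases (ii) and (i) of Definition~\ref{Def:Weighting} respectively. I do not anticipate any serious obstacle; the mild subtlety is keeping track of which edge of the first and last tile plays the role of start edge versus ``lower side of the first tile'' depending on whether the snake starts horizontally or vertically, but in every case the four edges of the tile decompose into the same pattern used above, so the argument proceeds uniformly.
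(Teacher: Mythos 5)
Your proof is correct and follows essentially the same route as the paper's: exactly one negative edge per square face because the two non-boundary edges of each tile are consecutive in the alternating sequence, and the infinite face has $2n+2$ edges with $0$ or $1$ negative signs according to the parity of $n$, which matches the required congruence classes. Your version simply spells out the bookkeeping that the paper's two-sentence proof leaves implicit.
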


\begin{proof}
By construction, every internal face is bounded by $4$ edges and exactly one of the edges weighs $-1$. The infinite face has $2n+2$ edges, if $n$ denotes the number of tiles of the graph. All of its edges weigh $1$, except for the end edge of the last tile when $n$ is odd.
\end{proof}

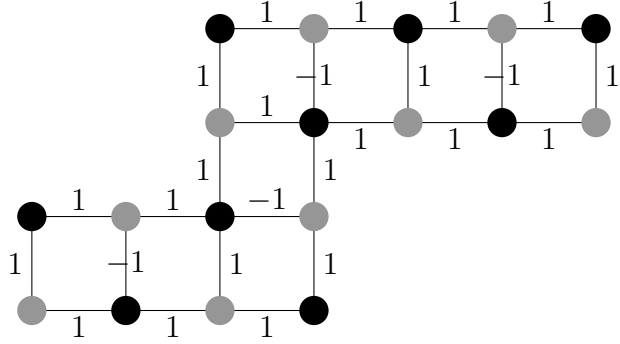
\begin{figure}

\begin{center}
\begin{tikzpicture}

\pgfdeclarelayer{background}
\pgfdeclarelayer{foreground}
\pgfsetlayers{background,main,foreground}

\definecolor{c1}{RGB}{150,150,150}

\newcommand{\x}{1.25cm} %%% side length of a tile
\newcommand{\R}{0.5pt}  %%% size of bullet
\newcommand{\s}{0.5cm}  %%% shift of labels
\newcommand{\sos}{0.22cm}  %%% shift of labels (signs)

\draw (0,0) node[fill,circle,minimum size=\R,color=c1]{};
\node at (0.5*\x,0*\x-\sos) {$1$};
\node at (0.5*\x,1*\x+\sos) {$1$};
\node at (0-\sos,0.5*\x) {$1$};
\draw (0,\x) node[fill,circle,minimum size=\R]{};

\draw (\x,0) node[fill,circle,minimum size=\R]{};
\node at (1.5*\x,0*\x-\sos) {$1$};
\node at (1.5*\x,1*\x+\sos) {$1$};
\draw (\x,\x) node[fill,circle,minimum size=\R,color=c1]{};
\node at (\x,0.5*\x) {$-1$};

\draw (2*\x,0) node[fill,circle,minimum size=\R,color=c1]{};
\node at (2*\x+\sos,0.5*\x) {$1$};
\draw (2*\x,\x) node[fill,circle,minimum size=\R]{};
\node at (2.5*\x,0*\x-\sos) {$1$};

\draw (3*\x,0) node[fill,circle,minimum size=\R]{};
\node at (3*\x+\sos,0.5*\x) {$1$};
\draw (3*\x,\x) node[fill,circle,minimum size=\R,color=c1]{};
\node at (2.5*\x,\x+\sos) {$-1$};

\draw (2*\x,2*\x) node[fill,circle,minimum size=\R,color=c1]{};
\draw (3*\x,2*\x) node[fill,circle,minimum size=\R]{};
\node at (3*\x+\sos,1.5*\x) {$1$};
\node at (2*\x-\sos,1.5*\x) {$1$};
\node at (2.5*\x,2*\x+\sos) {$1$};

\draw (2*\x,3*\x) node[fill,circle,minimum size=\R]{};
\node at (2*\x-\sos,2.5*\x) {$1$};
\draw (3*\x,3*\x) node[fill,circle,minimum size=\R,color=c1]{};
\node at (3*\x,2.5*\x) {{$-1$}};

\draw (4*\x,2*\x) node[fill,circle,minimum size=\R,color=c1]{};
\draw (4*\x,3*\x) node[fill,circle,minimum size=\R]{};
\node at (4*\x+\sos,2.5*\x) {{$1$}};

\draw (5*\x,2*\x) node[fill,circle,minimum size=\R]{};
\node at (3.5*\x,2*\x-\sos) {$1$};
\node at (4.5*\x,2*\x-\sos) {$1$};
\node at (5.5*\x,2*\x-\sos) {$1$};
\draw (5*\x,3*\x) node[fill,circle,minimum size=\R,color=c1]{};
\node at (5*\x,2.5*\x) {$-1$};
\node at (6*\x+\sos,2.5*\x) {$1$};

\draw (6*\x,2*\x) node[fill,circle,minimum size=\R,color=c1]{};
\draw (6*\x,3*\x) node[fill,circle,minimum size=\R]{};
\node at (5.5*\x,3*\x+\sos) {{$1$}};
\node at (4.5*\x,3*\x+\sos) {{$1$}};
\node at (3.5*\x,3*\x+\sos) {{$1$}};
\node at (2.5*\x,3*\x+\sos) {{$1$}};

\begin{pgfonlayer}{background}
\path[draw] (0,0) to (3*\x,0);
\path[draw] (0,\x) to (3*\x,\x);

\path[draw] (2*\x,0) to (2*\x,3*\x);
\path[draw] (3*\x,0) to (3*\x,3*\x);

\path[draw] (2*\x,2*\x) to (6*\x,2*\x);
\path[draw] (2*\x,3*\x) to (6*\x,3*\x);

\path[draw] (4*\x,2*\x) to (4*\x,3*\x);
\path[draw] (5*\x,2*\x) to (5*\x,3*\x);
\path[draw] (6*\x,2*\x) to (6*\x,3*\x);

\path[draw] (0,0) to (0,\x);
\path[draw] (\x,0) to (\x,\x);

\end{pgfonlayer}

\end{tikzpicture}

\end{center}

\caption{A real Kasteleyn weighting}
\label{fig:weighting}
\end{figure}

Given a Kasteleyn weighting $w$ of a plane bipartite graph $G=(V,E)$, we can construct another weighting $\widetilde{w}$ of $G$ using the following procedure. We pick a vertex $v\in V$ and change the weighting on all edges incident with $v$, that is, we put
\begin{align}
    \label{Eqn:GaugeTrafo}
    \widetilde{w}(e)=\begin{cases}
    -w(e)&\textrm{if $e$ is incident with $v$;}\\
    w(e)&\textrm{otherwise.}
    \end{cases}
\end{align}
It is easy to see that $\widetilde{w}$ is again a Kasteleyn weighting. Moreover, any two Kasteleyn weightings of $G$ are related to each other by applying this procedure for a suitable sequence of vertices, see Kenyon~\cite[Section 3.3]{K09}.

\begin{prop}
Let $w, \widetilde{w}\colon E\to \{\pm 1\}$ be two Kasteleyn weightings of the same bipartite plane graph $G=(V,E)$, and let $A, \widetilde{A}$ be their associated weighted adjacency matrices. Then $\widetilde{A}$ is unitarily equivalent to $A$ (i.e. $\widetilde{A}=UAU^*=UAU^{-1}$ for some unitary matrix~$U$).
\end{prop}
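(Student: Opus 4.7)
The plan is to reduce the claim to the case of a single gauge transformation at one vertex, and then exhibit an explicit diagonal orthogonal conjugator. Since any two Kasteleyn weightings are connected by a sequence of single-vertex gauge transformations as in Equation (\ref{Eqn:GaugeTrafo}) (by the result of Kenyon cited just before the proposition), and since a product of unitaries is unitary, it suffices to treat the case where $\widetilde{w}$ differs from $w$ only at the edges incident with some fixed vertex $v \in V$.

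For this reduced case, I would define the diagonal matrix $D_v \in \operatorname{Mat}_{m\times m}(\mathbb{Z})$ whose entry in the position corresponding to $v$ is $-1$ and whose other diagonal entries are $+1$. Since $D_v$ is real, diagonal, and satisfies $D_v^2 = \idmat[m]$, it is orthogonal and therefore unitary, with $D_v^{-1} = D_v = D_v^*$. A direct computation gives
\begin{align*}
(D_v A D_v)_{ij} = (D_v)_{ii}\, a_{ij}\, (D_v)_{jj},
\end{align*}
which equals $-a_{ij}$ exactly when one of $i,j$ equals $v$ and the other does not, and equals $a_{ij}$ otherwise. This matches the definition of $\widetilde{w}$ on the edges, so $\widetilde{A} = D_v A D_v^{-1}$, as required.

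For the general case, write $\widetilde{w}$ as the result of applying the single-vertex procedure successively at vertices $v_1, v_2, \ldots, v_k$, with intermediate weightings $w = w_0, w_1, \ldots, w_k = \widetilde{w}$ and associated adjacency matrices $A = A_0, A_1, \ldots, A_k = \widetilde{A}$. By the single-vertex case, $A_{j} = D_{v_j} A_{j-1} D_{v_j}^{-1}$ for each $j$, so iteration gives $\widetilde{A} = U A U^{-1}$ with $U = D_{v_k} D_{v_{k-1}} \cdots D_{v_1}$. Since each $D_{v_j}$ is unitary, $U$ is unitary as well.

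There is no substantial obstacle here: the only subtlety is recognising that the sign flip attached to a single vertex is precisely realised by conjugation with the signed diagonal matrix $D_v$, and that Kenyon's result lets us chain such elementary transformations. The conclusion $\widetilde{A} = UAU^* = UAU^{-1}$ follows immediately since $U$ is real orthogonal, so $U^* = U^T = U^{-1}$.
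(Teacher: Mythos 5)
Your proof is correct and follows essentially the same route as the paper: the paper likewise observes that each single-vertex gauge transformation multiplies the corresponding row and column by a sign, and that the whole sequence amounts to conjugation by a single diagonal matrix $D$ with entries in $\{\pm 1\}$, which is unitary and self-adjoint. Your version merely spells out the elementary single-vertex step and the chaining in more detail.
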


\begin{proof}
An application of the procedure described in Equation (\ref{Eqn:GaugeTrafo}) to a vertex $v_i$ corresponds to multiplying both row and column $i$ of the associated weighted adjacency matrix by a sign. The full procedure corresponds to a sequence of such operations, and is equivalent to pre- and post-multiplication of the matrix by a single diagonal matrix $D$ with diagonal entries in $\{\pm 1\}$. Hence $\widetilde{A}=DAD$ and clearly $D$ is both unitary and self-adjoint.
\end{proof}

\begin{defn}[Bipartite weighted adjacency matrices]
\label{Def:BipartiteAdjacency}
The submatrix $B$ of $A$ on rows $V_1$ and columns $V_2$ is called the \emph{bipartite adjacency matrix} of $G$. 
\end{defn}

The bipartite weighted adjacency matrix $B$ determines the weighted adjacency matrix $A$. Specifically, we can reorder the vertices in such a way that
\begin{align}
\label{Eqn:Bipartite}
    A=\left(
    \begin{matrix}
    0 & B\\
    B^T & 0
    \end{matrix}
    \right)
\end{align}
with $B\in\operatorname{Mat}_{m_1\times m_2}(\mathbb{Z})$ as in Definition~\ref{Def:BipartiteAdjacency} and zero matrices of size $m_1\times m_1$ and $m_2\times m_2$. Note that the matrix $BB^T$ is symmetric and positive semidefinite. In particular, its eigenvalues are non-negative real numbers.

Some authors, for example Kenyon \cite{K09}, also use the name \emph{Kasteleyn matrix} for the matrix $B$. Moreover, if $m_1=m_2$, then $\operatorname{det}(A)=\operatorname{det}(BB^T)=\operatorname{det}(B)^2$.

\begin{theo}[Kasteleyn \cite{K61,K63}, Temperley--Fisher \cite{TF61}]
\label{Thm:Kasteleyn}
Let $w\colon E\to \{\pm 1\}$ be a Kasteleyn weighting of a plane bipartite graph $G=(V_1\sqcup V_2,E)$ with $\lvert V_1\rvert=\lvert V_2\rvert$, and let $A$ be the associated weigthed adjacency matrix. Then
\begin{align}
\label{Eqn:NumberMatch}
        \left\vert \operatorname{det}(A)\right\rvert=\PM(G)^2\textnormal{ and }\lvert\operatorname{det}(B)\rvert=\PM(G).
    \end{align}
\end{theo}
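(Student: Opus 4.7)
The plan is to derive both identities from a single combinatorial fact about $\det(B)$: when the Leibniz expansion is unfolded, every non-vanishing term carries the same sign, so $\lvert\det(B)\rvert$ counts perfect matchings exactly. The first identity then falls out of the block decomposition.

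First I would dispose of the easy reduction. The block form in~(\ref{Eqn:Bipartite}), together with the permutation that swaps the two blocks of rows, gives $\det(A)=\pm\det(B)\det(B^{T})=\pm\det(B)^{2}$, so $\lvert\det(A)\rvert=\PM(G)^{2}$ follows at once from $\lvert\det(B)\rvert=\PM(G)$. Writing $V_{1}=\{v_{1},\ldots,v_{m}\}$ and $V_{2}=\{w_{1},\ldots,w_{m}\}$, the Leibniz formula reads
\begin{align*}
\det(B)=\sum_{\sigma\in S_{m}}\operatorname{sgn}(\sigma)\prod_{i=1}^{m}B_{i,\sigma(i)},
\end{align*}
in which a term is non-zero precisely when $P_{\sigma}:=\{v_{i}w_{\sigma(i)}:1\le i\le m\}$ is a perfect matching of $G$; each surviving term has absolute value $1$. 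Hence $\lvert\det(B)\rvert\le\PM(G)$, with equality if and only if all non-zero terms share a common sign.

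Next I would reduce the common-sign condition to a statement about the weight of a single cycle. Given two perfect matchings $P,P'$ with associated permutations $\sigma,\sigma'$, the symmetric difference $P\triangle P'$ is a vertex-disjoint union of cycles $C_{1},\ldots,C_{r}$, each of even length $2\ell_{j}$ (cycles in the bipartite graph are even, and $P,P'$ alternate along them). One checks that $\sigma'\sigma^{-1}$ acts on $V_{1}$ as a product of $r$ disjoint cycles of lengths $\ell_{1},\ldots,\ell_{r}$, giving $\operatorname{sgn}(\sigma)\operatorname{sgn}(\sigma')=\prod_{j}(-1)^{\ell_{j}-1}$, while the weight ratio between the two Leibniz terms is $\prod_{j}\prod_{e\in C_{j}}w(e)$. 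Hence the sign-coincidence reduces to the single claim
\begin{align*}
\prod_{e\in C}w(e)=(-1)^{\ell+1}\qquad\text{for every simple even cycle }C\text{ of length }2\ell\text{ in }G.
\end{align*}

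The main obstacle is this last claim, which is where Definition~\ref{Def:Weighting} is essential. I would view $C$ as the boundary of a topological disc in the plane and let $F_{1},\ldots,F_{s}$ denote the interior faces, of lengths $2k_{i}$. Each interior edge bounds two of the $F_{i}$, so its weight cancels when we multiply out $\prod_{i}\prod_{e\in\partial F_{i}}w(e)$, leaving exactly $\prod_{e\in C}w(e)$. The Kasteleyn hypothesis translates into $\prod_{e\in\partial F_{i}}w(e)=(-1)^{k_{i}+1}$ for every $i$, and a short bookkeeping using Euler's relation $V-E_{\mathrm{int}}+s=1$ for the disc and the double count $\sum_{i}2k_{i}=2\ell+2E_{\mathrm{int}}$ produces $\sum_{i}(k_{i}+1)\equiv\ell+V+1\pmod{2}$, where $V$ is the number of vertices strictly inside $C$. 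Because those interior vertices must be matched among themselves by the edges of $P\cap P'$, the integer $V$ is even, whence $\prod_{e\in C}w(e)=(-1)^{\ell+1}$ and the proof closes.
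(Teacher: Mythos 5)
Your argument is correct, and its first half --- the Leibniz expansion of $\det(B)$, the correspondence between non-vanishing terms and perfect matchings, and the reduction of $\lvert\det(A)\rvert=\PM(G)^2$ to $\lvert\det(B)\rvert=\PM(G)$ via the block form~(\ref{Eqn:Bipartite}) --- coincides with the paper's own proof, which is explicitly only a sketch. Where you go genuinely further is the sign-coincidence step: the paper merely asserts that the conditions of Definition~\ref{Def:Weighting} force all non-zero Leibniz terms to carry the same sign, whereas you actually prove it, by passing to the symmetric difference of two matchings, computing $\operatorname{sgn}(\sigma'\sigma^{-1})$ from the cycle lengths, and evaluating the weight of each alternating cycle as a product over the enclosed faces combined with Euler's relation and the parity of the number of enclosed vertices. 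This is the standard completion of Kasteleyn's argument, and your bookkeeping checks out: $V-E_{\mathrm{int}}+s=1$, $\sum_i k_i=\ell+E_{\mathrm{int}}$, hence $\sum_i(k_i+1)\equiv\ell+V+1\pmod 2$, with $V$ even because the interior vertices of each component of $P\triangle P'$ must be matched among themselves by planarity. One small imprecision to fix: the displayed claim $\prod_{e\in C}w(e)=(-1)^{\ell+1}$ is not true for an arbitrary simple even cycle of $G$ (it can fail when $C$ encloses an odd number of vertices), so you should state it only for cycles arising as components of a symmetric difference of perfect matchings --- equivalently, cycles whose interior vertices admit a perfect matching --- which is exactly what your proof establishes and all that the argument requires.
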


\begin{proof}[Sketch of the proof] Let us abbreviate $n=m_1=m_2$ and label the black vertices by $\{v_0,\ldots,v_{n-1}\}$ and the white vertices $\{w_0,\ldots,w_{n-1}\}$. We expand the determinant as
\begin{align*}
    \operatorname{det}(B)=\sum_{\pi \in S_n} \operatorname{sgn}(\pi) b_{0,\pi(0)}b_{1,\pi(1)}\cdot\ldots\cdot b_{n-1,\pi(n-1)}.
\end{align*}
By construction the summand associated with a permutation $\pi$ vanishes unless $(v_i,w_{\pi(i)})$ is an edge in $G$ for every $i\in [0,n-1]$. In this case those edges form a perfect matching of $G$. The corresponding summand is equal to $+1$ or $-1$, and the conditions that a Kasteleyn weighting must fulfill (see Definition~\ref{Def:Weighting}) imply that all summands have the same sign.
\end{proof}

\subsection{Weighted adjacency matrices of bipartite graphs}
\label{Subsec:WeightedAdjacency}

Suppose we are given a bipartite graph $G=(V,E)$ with bipartition $V=V_1\sqcup V_2$ of order $(m_1,m_2)$ together with a weighting $w\colon E\to R$. 

\begin{prop}
\label{Prop:Eigenvectors}
Let $t\neq 0$ be real number. A vector $v\in \mathbb{R}^{m_1}$ is an eigenvector of $BB^T$ of eigenvalue $t^2$ if and only if
\begin{align*}
\begin{pmatrix}
tv\\ B^Tv
\end{pmatrix},
\begin{pmatrix}
-tv\\ B^Tv
\end{pmatrix}
\in\mathbb{R}^{m_1+m_2}
\end{align*}
are eigenvectors of $A$ of eigenvalues $t$ and $-t$.
\end{prop}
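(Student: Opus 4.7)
The plan is to do a direct block computation. With $A$ written in the bipartite block form from Equation~(\ref{Eqn:Bipartite}), applying $A$ to a vector of the form $\begin{pmatrix}\pm tv\\ B^Tv\end{pmatrix}$ gives
\begin{align*}
A\begin{pmatrix}\pm tv\\ B^Tv\end{pmatrix}
=\begin{pmatrix}0 & B\\ B^T & 0\end{pmatrix}\begin{pmatrix}\pm tv\\ B^Tv\end{pmatrix}
=\begin{pmatrix}BB^Tv\\ \pm tB^Tv\end{pmatrix},
\end{align*}
while the required right-hand side is $\pm t\begin{pmatrix}\pm tv\\ B^Tv\end{pmatrix}=\begin{pmatrix}t^2 v\\ \pm tB^Tv\end{pmatrix}$. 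Comparing blocks, the lower block matches automatically and the upper block gives exactly the relation $BB^Tv=t^2v$.

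For the forward direction, I would start from $BB^Tv=t^2v$ with $v\neq 0$. The computation above then shows that the two block vectors satisfy the desired eigenvalue equations for $A$, and they are nonzero because $t\neq 0$ forces $tv\neq 0$. For the converse, if either of the block vectors is a (nonzero) eigenvector of $A$ with eigenvalue $\pm t$, reading off the top block of the equation $A\begin{pmatrix}\pm tv\\ B^Tv\end{pmatrix}=\pm t\begin{pmatrix}\pm tv\\ B^Tv\end{pmatrix}$ recovers $BB^Tv=t^2v$. To see that $v$ itself is nonzero, note that if $v=0$ then both $\pm tv$ and $B^Tv$ vanish, contradicting that the block vector was an eigenvector (hence nonzero).

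There is no real obstacle here: everything follows from the block form of $A$ together with the hypothesis $t\neq 0$. The only point requiring a line of care is the nonvanishing bookkeeping in the converse direction, to make sure we are really producing an eigenvector of $BB^T$ rather than just a solution of the eigenvalue equation by the zero vector.
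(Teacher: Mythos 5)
Your proposal is correct and follows essentially the same route as the paper: a direct block computation of $A\begin{pmatrix}\pm tv\\ B^Tv\end{pmatrix}$, observing that the lower block matches automatically and the upper block is equivalent to $BB^Tv=t^2v$. The extra bookkeeping you add about nonvanishing is fine and only makes explicit what the paper handles by restricting to $v\neq 0$ at the outset.
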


\begin{proof} Every $v\in \mathbb{R}^{m_1}\backslash \{0\}$ satisfies
\begin{align*}
 \begin{pmatrix}
0&B \\
B^T & 0
\end{pmatrix}\begin{pmatrix}
\pm tv\\ B^Tv
\end{pmatrix}=\begin{pmatrix}
BB^Tv\\ \pm t B^Tv
\end{pmatrix}.
\end{align*}
This vector agrees with the (nonzero) vector
\begin{align*}
\begin{pmatrix}
t^2v\\ \pm t B^Tv
\end{pmatrix}=\pm t\begin{pmatrix}
\pm tv\\ B^Tv
\end{pmatrix}
\end{align*}
if and only if $BB^Tv=t^2 v$, that is, $v$ is an eigenvector of $BB^T$ with eigenvalue $t^2$.
\end{proof}

In particular, Proposition~\ref{Prop:Eigenvectors} implies that if the determinants of $A$ and $BB^T$ are non-zero, then they have the same absolute value, since they can be expressed as products of their eigenvalues.

\begin{prop}
\label{Prop:CharPoly}
Suppose that $\lvert V_1\rvert=\lvert V_2\vert$. The characteristic polynomials of $A$ and $BB^T$ are related to each other by the relation $\charpoly{A}(t)=\charpoly{BB^T}(t^2)$.
\end{prop}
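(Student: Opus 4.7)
The plan is a direct block-matrix computation via the Schur complement. Write $n=m_1=m_2$ and use the block form (\ref{Eqn:Bipartite}) to get
\begin{align*}
t\idmat[2n]-A=\begin{pmatrix} t\idmat[n] & -B \\ -B^T & t\idmat[n]\end{pmatrix}.
\end{align*}
For $t\neq 0$ the bottom-right block is invertible, and the Schur complement formula gives
\begin{align*}
\charpoly{A}(t)=\operatorname{det}(t\idmat[n])\operatorname{det}\bigl(t\idmat[n]-(-B)(t\idmat[n])^{-1}(-B^T)\bigr)=t^n\operatorname{det}(t\idmat[n]-t^{-1}BB^T).
\end{align*}
Factoring $t^{-1}$ out of the second determinant contributes $t^{-n}$, leaving $\operatorname{det}(t^2\idmat[n]-BB^T)=\charpoly{BB^T}(t^2)$. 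The two powers of $t$ cancel, so $\charpoly{A}(t)=\charpoly{BB^T}(t^2)$ on $\mathbb{R}\setminus\{0\}$. Since both sides are polynomials in $t$, they agree everywhere.

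As an alternative I would mention that one can argue from Proposition~\ref{Prop:Eigenvectors} directly: each positive eigenvalue $t^2$ of $BB^T$ of multiplicity $d$ yields the pair $\pm t$ as eigenvalues of $A$ each of multiplicity $d$, and the hypothesis $m_1=m_2$ forces the multiplicity of $0$ as a root of $\charpoly{A}$ to equal twice the multiplicity of $0$ as a root of $\charpoly{BB^T}$, via $\dim\ker A=\dim\ker B+\dim\ker B^T=2\dim\ker BB^T$ (using that $B$ is square, so $\operatorname{rank}(B)=\operatorname{rank}(B^T)$ and $\ker(BB^T)=\ker(B^T)$). Comparing degrees ($2n$ on both sides) and leading coefficients ($1$ on both sides) then gives the identity.

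There is no substantive obstacle. The only point to watch is the extension of the identity from $t\neq 0$ to $t=0$, which is immediate from the polynomiality of both sides. I would present the Schur complement calculation because it is self-contained, does not require a separate kernel analysis, and produces the polynomial identity in a single line.
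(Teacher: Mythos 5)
Your proof is correct and is essentially identical to the paper's: both apply the Schur complement determinant identity to the block form of $t\idmat-A$ with the bottom-right block $t\idmat[n]$, cancel the two factors of $t^{\pm n}$ to obtain $\operatorname{det}(t^2\idmat[n]-BB^T)$ for $t\neq 0$, and then extend to all $t$ by polynomiality. The alternative eigenvalue-multiplicity argument you sketch is a reasonable extra, but the main route matches the paper's proof step for step.
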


\begin{proof}
A block matrix with $2\times 2$ blocks of size $m_1\times m_1$ satisfies the identity
\begin{align*}
    \operatorname{det}\begin{pmatrix}X_1&X_2\\X_3&X_4\end{pmatrix}=\operatorname{det}\left(X_1-X_2X_4^{-1}X_3\right)\operatorname{det}(X_4),
\end{align*}
provided $X_4$ is invertible. Now suppose that $t$ is a nonzero real number. We plug in $X_1=X_4=t I_{m_1}$, $X_2=-B$, and $X_3=-B^T$. We obtain
\begin{align*}
    \charpoly{A}(t)=\operatorname{det}\left(tI_{m_1}-A\right)&=\operatorname{det}\left(tI_{m_1}-t^{-1}BB^T\right)t^{m_1}=\operatorname{det}\left(t^2 I_{m_1}-BB^T\right)=\charpoly{BB^T}\left(t^2\right).
\end{align*}
Since the identity holds for all $t\in \mathbb{R}\backslash\{0\}$, both polynomials must agree.
\end{proof}

\subsection{On the bipartite structure of snake graphs}
\label{Subsec:BipartiteStructure}

Now we assume that $G=(V,E)$ is a snake graph with a bipartition $V=V_1 \sqcup V_2$ of the vertices into black and white. Furthermore, we equip $G$ with a Kasteleyn weighting $w\colon E\to \{\pm 1\}$. In this case $BB^T$ is a square matrix whose rows and columns are both indexed by the black vertices of $G$. By construction, there are as many black as white vertices.

\begin{rem}
Every snake graph admits at least one perfect matching. So the bipartite matrix $B$ is invertible by Equation~(\ref{Eqn:NumberMatch}). It follows that $\operatorname{det}(BB^T)=\operatorname{det}(B)^2\neq 0$. Hence, all eigenvalues of $BB^T$ are positive and $BB^T$ is positive definite.
\end{rem}

\begin{prop}
\label{Prop:DiagonalEntries}
Given a black vertex $i\in V_1$. The diagonal entry of $BB^T$ at position $i$ is equal to the number of white vertices that are adjacent to $i$.
\end{prop}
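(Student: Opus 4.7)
The plan is essentially to expand the matrix product $BB^T$ at position $(i,i)$ directly from the definition and exploit the fact that the weights are $\pm 1$.

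First I would write the diagonal entry using the formula for matrix multiplication:
\begin{align*}
(BB^T)_{ii} = \sum_{j\in V_2} B_{ij}\bigl(B^T\bigr)_{ji} = \sum_{j\in V_2} B_{ij}^2.
\end{align*}
By Definition of the bipartite weighted adjacency matrix, $B_{ij}=w(e)\in\{\pm 1\}$ when $i\in V_1$ and $j\in V_2$ are connected by an edge $e$, and $B_{ij}=0$ otherwise. Squaring therefore kills the sign: $B_{ij}^2=1$ if $i$ and $j$ are adjacent (regardless of whether the edge has weight $+1$ or $-1$), and $B_{ij}^2=0$ if they are not adjacent.

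Consequently $(BB^T)_{ii}$ counts exactly the white vertices $j\in V_2$ that are adjacent to $i$, which is the desired statement. There is no real obstacle here; the whole point of squaring the entries is that the Kasteleyn sign data disappears on the diagonal of $BB^T$, leaving just the combinatorial neighbour count. (Note that this uses only that weights lie in $\{\pm 1\}$, not that $w$ is actually a Kasteleyn weighting, so the statement holds for any real weighting from Definition of weightings.)
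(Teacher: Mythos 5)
Your proof is correct and follows essentially the same route as the paper: expand $(BB^T)_{ii}=\sum_{j\in V_2}B_{ij}^2$ and observe that squaring the $\pm1$ entries leaves the indicator of adjacency. The closing remark that only $w(e)\in\{\pm1\}$ is needed, not the Kasteleyn condition, is accurate and matches the generality of the paper's argument.
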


\begin{proof}
For all white vertices $k \in V_2$, we have
\begin{align*}
b_{ik}^2=
\begin{cases}
1 &\text{if there is an edge from $k$ to $i$;} \\
0 &\text{otherwise}.
\end{cases}
\end{align*}
Hence, the diagonal entry is given by
\begin{align*}
    \left(BB^T\right)_{ii}=\sum_{k \in V_2} b_{ik} b^T_{ki}=\sum_{k \in V_2} b_{ik}^2
     = \lvert \{ k \in V_2 \setst{} \text{$k$ is adjacent to $i$}\} \rvert
\end{align*}
by the nature of matrix multiplication.
\end{proof}

Recall that turns are vertices in a snake graph with $2$ or $4$ neighbours that do not lie at the beginning or the end, see Definition~\ref{Def:Turns}. We are particularly interested in snake graph for which all turns have the same colour. Equivalently, except at the beginning and the end, the length of any maximal sequence of consecutive tiles propagating in the same direction must be odd. For example, the graph in Figure~\ref{fig:weighting} satisfies this property.

Given a graph with this property, we assume that the turns are coloured black without loss of generality.

\begin{prop}
\label{Prop:NoInteraction}
Assume that $G$ is a snake graph such that all turns are black. Let $i$ be a black vertex on the upper boundary of $G$ and let $j$ be a black vertex on the lower boundary of $G$. Then $(BB^T)_{ij}=0$.
\end{prop}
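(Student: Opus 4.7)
The plan is to expand
\begin{align*}
(BB^T)_{ij}=\sum_{k\in V_2}b_{ik}b_{jk}
\end{align*}
as a sum over the common white neighbours of $i$ and $j$ and to show that these contributions cancel, in pairs, via the Kasteleyn condition on a single tile.

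First I would exploit the hypothesis that every turn of $G$ is black: no white vertex is a $4$-turn, hence every white vertex has degree at most $3$ and lies in at most two tiles of $G$. For a common white neighbour $k$ of $i$ and $j$ there are then two structural possibilities. If $k$ lies in a single tile $T$, then $i$ and $j$ are both corners of $T$ and, having the same colour, diagonally opposite in $T$. If instead $k$ lies on the shared edge between two adjacent tiles, then $k$ has degree $3$ and sits on one of the two boundary paths, say the upper one; its three black neighbours are the two path-neighbours of $k$ along the upper boundary, together with the other endpoint of the shared edge, which necessarily lies on the lower boundary.

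The main obstacle, and the place where the assumption that $i$ and $j$ sit on opposite boundaries is essential, is to exclude the case in which $\{i,j\}$ coincides with the two same-boundary neighbours of $k$. But this is immediate: since $i$ and $j$ lie on opposite boundaries, at least one of them must be the unique opposite-boundary neighbour of $k$, which again places $i$ and $j$ inside a common tile as diagonally opposite corners. Two distinct tiles of a snake graph cannot share two non-adjacent corners (adjacent tiles share an edge and hence two adjacent corners, while non-adjacent tiles share at most one vertex), so the tile $T$ with $i$ and $j$ as diagonal corners is unique, and the common white neighbours of $i,j$ are exactly the two remaining corners $k_1,k_2$ of $T$.

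Finally I would apply the Kasteleyn condition to the $4$-cycle bounding the face $T$, which gives $w(ik_1)\,w(k_1j)\,w(jk_2)\,w(k_2i)=-1$. Using that the weights are symmetric and take values in $\{\pm 1\}$, this rearranges to $w(ik_1)w(jk_1)=-w(ik_2)w(jk_2)$, and therefore
\begin{align*}
(BB^T)_{ij}=w(ik_1)w(jk_1)+w(ik_2)w(jk_2)=0.
\end{align*}
If $i$ and $j$ have no common white neighbour at all, the sum is empty and the conclusion is immediate.
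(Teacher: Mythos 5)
Your proof is correct and follows essentially the same route as the paper's: reduce the sum $(BB^T)_{ij}=\sum_k b_{ik}b_{jk}$ to the two common white neighbours sitting on a single tile and cancel them via the Kasteleyn condition on that face. The only difference is that you spell out the combinatorial facts the paper merely asserts (that opposite-boundary black vertices have common neighbours only when they are diagonal in a tile, and then exactly two of them), using the degree bound on white vertices coming from the all-turns-black hypothesis.
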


\begin{proof}
First, suppose that $i$ and $j$ are opposite vertices of a square tile. Then there are exactly two white vertices $k$ and $l$ which are adjacent to both $i$ and $j$. We have $b_{ik}b_{jk}b_{il}b_{jl}=-1$, since the weighting $w$ is assumed to be a Kasteleyn weighting. We conclude that
\begin{align*}
    \left(BB^T\right)_{ij} = b_{ik}b_{jk}+b_{il}b_{jl}=0.
\end{align*}
Second, if $i$ and $j$ are not opposite vertices of a square tile, then $(BB^T)_{ij}=0$, since $i$ and $j$ do not have any common neighbours.
\end{proof}

\begin{prop}
\label{Prop:Tridiagonal}
Assume that $G$ is a snake graph such that all turns are black. We endow the graph with the Kasteleyn weighting from Definition~\ref{Def:ConstructionRealWeighting} Then we can reorder the black vertices $V$ such that $BB^T$ becomes a $2\times 2$ block matrix
\begin{align*}
   BB^T= \begin{pmatrix}
   B_1 & 0\\
   0& B_2
   \end{pmatrix}
\end{align*}
with blocks indexed by the upper and lower boundary. The matrices $B_1$ and $B_2$ are tridiagonal matrices %Moreover, the Kasteleyn weighting $w$ can be chosen
such that the entries on the first diagonal above and below the main diagonal are all $1$.
\end{prop}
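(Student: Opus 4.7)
The plan is to deduce the block-diagonal decomposition immediately from Proposition~\ref{Prop:NoInteraction}, and then to analyse each diagonal block by a careful case analysis on the possible common white neighbours of two upper-boundary (or, symmetrically, lower-boundary) black vertices. The hypothesis that every turn is black is essential because it forces every white vertex to have at most three neighbours, which in turn controls how many common white neighbours two black vertices can possibly share.

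I would enumerate the black vertices so that those on the upper boundary come first, listed in the order in which they appear along the upper boundary path, followed by those on the lower boundary in their path order. With this ordering, Proposition~\ref{Prop:NoInteraction} immediately yields
\[
BB^T = \begin{pmatrix} B_1 & 0 \\ 0 & B_2 \end{pmatrix},
\]
with $B_1$ indexed by the upper-boundary black vertices and $B_2$ by the lower-boundary black vertices.

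To get the tridiagonal structure of $B_1$, fix two distinct upper-boundary black vertices $i$ and $j$ and recall that $(BB^T)_{ij} = \sum_w b_{iw} b_{jw}$ is a sum over common white neighbours $w$. The geometric fact I would first establish is that every edge of $G$ lying on neither boundary path, namely every edge shared between two consecutive tiles together with the start and the end edge, connects an upper-boundary vertex to a lower-boundary vertex. Combined with the bound of three neighbours per white vertex, this yields a dichotomy. If $w$ lies on the lower boundary, then $w$ has at most two lower-boundary path neighbours and at most one further neighbour, which would be upper-boundary; so $w$ cannot be a common neighbour of two distinct upper-boundary vertices. If $w$ lies on the upper boundary, then its upper-boundary neighbours are precisely its (at most two) neighbours along the upper boundary path, so $i$ and $j$ must be the two path-neighbours of $w$, hence consecutive in the enumeration. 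Thus $(B_1)_{ij} = 0$ whenever $\lvert i - j\rvert \geq 2$.

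For consecutive indices $i$ and $i+1$, the analysis above pinpoints the unique common white neighbour $w$ as the white vertex between them along the upper boundary path, and both edges $iw$ and $(i+1)w$ lie on the upper boundary. Definition~\ref{Def:ConstructionRealWeighting} assigns them weight $+1$, giving $(B_1)_{i,i+1} = 1$. The same argument carries over verbatim to $B_2$. I expect the main technical obstacle to be the geometric dichotomy about non-path edges: it requires a short case analysis distinguishing vertical from horizontal shared edges between consecutive tiles, and separately verifying that the start and end edges also cross between the two boundary components, which is where the grid structure of snake graphs enters the argument in a genuine way.
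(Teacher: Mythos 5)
Your proposal is correct and follows essentially the same route as the paper: block-diagonality from Proposition~\ref{Prop:NoInteraction}, and the value $1$ on the off-diagonals from the weighting being $+1$ on boundary-path edges. You additionally spell out why two black vertices of the same boundary that are not consecutive share no common white neighbour (via the degree bound forced by black turns and the fact that non-path edges join the two boundary components) --- a step the paper's proof leaves implicit --- and your argument there is sound.
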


\begin{proof} Proposition~\ref{Prop:NoInteraction} implies that we can reorder the black vertices so that $BB^T$ is a $2\times 2$ block matrix as above, with blocks indexed by the black vertices in the upper and lower boundary. The upper and the lower boundary are path graphs with vertices alternating in colour. By construction of the weighting we have $w(e)=1$ for all edges $e$ in the upper and lower boundary. 

Let $i$ and $j$ be black vertices in the same boundary component such that the only white vertex in $G$ adjacent to both of them, say $k$, lies in the same boundary component between them. If $e_1=(i,k)$ and $e_2=(j,k)$, then $(BB^T)_{ij}=w(e_1)w(e_2)=1\cdot 1=1$.
\end{proof}

For the particular case of horizontal snake graphs, the characteristic polynomials $\charpoly{A}(t)=\charpoly{BB^T}(t^2)$, as well as the eigenvalues and eigenvectors, have been determined independently by Kasteleyn~\cite{K61,K63} and Temperley--Fisher~\cite{TF61}.

\begin{theo}[Kasteleyn, Temperley--Fisher]
\label{Thm:Horizontal}
Suppose that $G=H_n$. The eigenvalues of $BB^T$ are given by
\begin{align*}
    4\cos^2 \left(\frac{l \pi}{n+2}\right)+1&&(l=1,\ldots,n+1).
    \end{align*}
\end{theo}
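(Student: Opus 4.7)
The plan is to compute the bipartite weighted adjacency matrix $B$ of $H_n$ explicitly in a convenient labelling, and then to recognise $BB^T$ as a polynomial in the adjacency matrix of a path graph, whose spectrum is classical.

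First, I would label the $n+1$ black and $n+1$ white vertices of $H_n$ by their column index: take the bipartition $(i,j)$ black iff $i+j$ is even, and set $b_i = (i,\, i \bmod 2)$ and $w_i = (i,\, (i+1) \bmod 2)$ for $i=0,1,\ldots,n$, so that $b_i$ and $w_i$ sit in the same column $i$. With the Kasteleyn weighting of Definition~\ref{Def:ConstructionRealWeighting}, every horizontal edge lies in the upper or lower boundary and carries weight $+1$, while the vertical edge at column $i$ is not a boundary edge and therefore carries the alternating weight $(-1)^i$. Reading off the nonzero entries of $B$, one checks that $B_{ii} = (-1)^i$ (vertical edge at column $i$) and $B_{i,i\pm 1} = 1$ (horizontal edges between columns), giving
\begin{align*}
B = T_{n+1} + D,
\end{align*}
where $T_{n+1}$ is the adjacency matrix of the path graph $P_{n+1}$ and $D = \operatorname{diag}(1,-1,1,\ldots,(-1)^n)$. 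In particular $B$ is symmetric, so $BB^T = B^2$.

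Next, I would expand the square. Since $D^2 = I_{n+1}$, we get $B^2 = T_{n+1}^2 + (T_{n+1}D + D T_{n+1}) + I_{n+1}$. A one-line check shows the cross term vanishes: for $|i-j|=1$ the $(i,j)$-entry of $T_{n+1} D + D T_{n+1}$ equals $(-1)^j + (-1)^i = 0$, because $i$ and $j$ have opposite parities; all other entries are zero since $T_{n+1}$ is supported on the first off-diagonals. Therefore
\begin{align*}
BB^T = T_{n+1}^2 + I_{n+1}.
\end{align*}

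Finally, I would invoke the classical spectrum of the path graph: the eigenvalues of $T_{n+1}$ are $2\cos(l\pi/(n+2))$ for $l=1,\ldots,n+1$, with eigenvectors given by the discrete sines $v_l(k) = \sin(kl\pi/(n+2))$. Consequently the eigenvalues of $BB^T$ are exactly $4\cos^2(l\pi/(n+2)) + 1$, as claimed. There is no genuine obstacle; the only step that requires care is to align the sign convention for the vertical edges in Definition~\ref{Def:ConstructionRealWeighting} with the pattern $(-1)^i$ on the diagonal of $D$, since it is precisely this alignment that makes the cross term $T_{n+1}D + DT_{n+1}$ cancel and leaves the clean identity $BB^T = T_{n+1}^2 + I_{n+1}$.
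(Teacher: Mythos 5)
Your proof is correct, but it takes a genuinely different route from the paper. The paper does not prove Theorem~\ref{Thm:Horizontal} directly where it is stated; instead it reproves it later (Corollary~\ref{Coro:Horizontal}) by invoking Proposition~\ref{Prop:Tridiagonal} to split $BB^T$ into two tridiagonal blocks indexed by the black vertices of the upper and lower boundaries, identifying their characteristic polynomials with the Fibonacci product polynomials $P_{r+1}$ and $(x-1)Q_r$ via the neighbour-count recursion of Lemma~\ref{Lemma:Tridiagonal}, and then reading off the roots from Proposition~\ref{Prop:RootsFibonacciProdPoly}. You instead compute $B$ itself in the column labelling, observe that with the weighting of Definition~\ref{Def:ConstructionRealWeighting} it equals $T_{n+1}+D$ with $D=\operatorname{diag}(1,-1,\ldots,(-1)^n)$, and exploit the symmetry of $B$ and the parity cancellation $T_{n+1}D+DT_{n+1}=0$ to get $BB^T=T_{n+1}^2+I_{n+1}$, after which the classical path-graph spectrum finishes the job. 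Your computation of the weights is right (all horizontal edges lie in the upper or lower boundary and get $+1$; the $n+1$ vertical edges alternate starting from $+1$, so the one in column $i$ gets $(-1)^i$), the cross-term cancellation is valid because adjacent columns have opposite parity, and since any two Kasteleyn weightings conjugate $BB^T$ by a diagonal sign matrix the spectrum is independent of the choice. What your argument buys is brevity and transparency: it needs no block decomposition and no Chebyshev or Fibonacci-product machinery, and it exhibits the eigenvectors explicitly as discrete sines. What it gives up is generality: it hinges on $B$ being symmetric tridiagonal in a single natural ordering, which is special to $H_n$, whereas the paper's boundary-block method is the engine behind all the later results on \lrev-shaped graphs and staircases. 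One cosmetic point: the first and last vertical edges are boundary edges of the infinite face (they are the start and end edges), just not members of the upper or lower boundary in the sense of Definition~\ref{Def:UpperLowerBoundary}; your weight assignment for them is nonetheless the correct one.
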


Notice that in Theorem~\ref{Thm:Horizontal} the indices $l$ and $n+1-l$ yield the same eigenvalue. Especially, all eigenvalues occur with multiplicity $2$, except possibly for the eigenvalue $1$, which occurs with multiplicity $1$ in the case when $n$ is even for the choice $l=(n+2)/2$.

\begin{coro}
\label{Coro:FibonacciProd}
Combining Example~\ref{Ex:Fibonacci} and Theorem~\ref{Thm:Horizontal} we obtain
\begin{align*}
    F_{n+2}=\prod_{l=1}^{\lfloor \frac{n+1}{2}\rfloor} \left(4\cos^2 \left(\frac{l \pi}{n+2}\right)+1\right).
\end{align*}
\end{coro}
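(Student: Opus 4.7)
The plan is to combine Theorems~\ref{Thm:Kasteleyn} and \ref{Thm:Horizontal} with Example~\ref{Ex:Fibonacci}, and then exploit the symmetry $\cos(\pi-\theta)=-\cos(\theta)$ to fold the full product of eigenvalues into a perfect square. First I would write, using Theorem~\ref{Thm:Kasteleyn} together with Example~\ref{Ex:Fibonacci},
\begin{align*}
F_{n+2}^2=\PM(H_n)^2=\lvert\det(B)\rvert^2=\det(B)\det(B^T)=\det(BB^T).
\end{align*}
Since $BB^T$ has size $(n+1)\times(n+1)$ and Theorem~\ref{Thm:Horizontal} lists its $n+1$ eigenvalues (with multiplicity, as the small cases $n=1,2,3$ already confirm), the determinant agrees with
\begin{align*}
\det(BB^T)=\prod_{l=1}^{n+1}\left(4\cos^2\!\left(\frac{l\pi}{n+2}\right)+1\right).
\end{align*}

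Next I would exploit the identity $\cos((n+2-l)\pi/(n+2))=\cos(\pi-l\pi/(n+2))=-\cos(l\pi/(n+2))$, which forces the factors at indices $l$ and $n+2-l$ to coincide. The involution $l\mapsto n+2-l$ on $\{1,\ldots,n+1\}$ acts freely when $n$ is odd, so the product is visibly the square of the product over $l=1,\ldots,(n+1)/2$. When $n$ is even it has the unique fixed point $l=(n+2)/2$, which contributes the harmless factor $4\cos^2(\pi/2)+1=1$, and the remaining $n$ indices pair up into a square of the product over $l=1,\ldots,n/2$. Since $\lfloor(n+1)/2\rfloor$ equals $(n+1)/2$ in the odd case and $n/2$ in the even case, both parities yield the uniform identity
\begin{align*}
F_{n+2}^2=\left[\prod_{l=1}^{\lfloor(n+1)/2\rfloor}\left(4\cos^2\!\left(\frac{l\pi}{n+2}\right)+1\right)\right]^2.
\end{align*}

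Finally I would extract the positive square root of both sides, which is legitimate because each factor on the right is strictly positive and $F_{n+2}\geq 0$. There is no real obstacle to this argument; the only delicate point is the parity case split, and the floor in the exponent already bundles the two cases uniformly. The underlying reason the corollary holds is therefore just the compatibility between the Kasteleyn--Temperley--Fisher count, the Fibonacci enumeration of matchings of $H_n$, and the palindromic structure of the eigenvalue list in Theorem~\ref{Thm:Horizontal}.
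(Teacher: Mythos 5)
Your argument is correct and is precisely the intended one: the paper gives no explicit proof, but the corollary is meant to follow exactly as you describe, from $\PM(H_n)=F_{n+2}$, Kasteleyn's identity $\lvert\det(B)\rvert=\PM(G)$, the eigenvalue list of Theorem~\ref{Thm:Horizontal}, and the folding of the product via $\cos(\pi-\theta)=-\cos(\theta)$. Note only that your pairing $l\leftrightarrow n+2-l$ is the right one; the remark following Theorem~\ref{Thm:Horizontal} states the pairing as $l\leftrightarrow n+1-l$, which is an off-by-one slip in the paper (its own identification of the fixed point $l=(n+2)/2$ confirms your version).
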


The polynomials having the factors of the product on the right hand side in Corollary~\ref{Coro:FibonacciProd} as roots are called \emph{Fibonacci product polynomials}. They will be studied in Section \ref{Sec:FibProd}.

\subsection{Equivalence of Kasteleyn weightings and Pfaffian orientations}
There is an alternative version of Kasteleyn's Theorem using Pfaffians of skew-symmetric matrices instead of determinants of symmetric matrices. In this subsection we will see that both approaches are equivalent.

\begin{defn}[Pfaffian orientations]
Given the weighting $w\colon E\to \{\pm 1\}$ from Definition~\ref{Def:ConstructionRealWeighting} of $G$, we can construct an orientation as follows.
\begin{itemize}
    \item[(i)] An edge $e$ with $w(e)=1$ is oriented from black to white.
    \item[(ii)] An edge $e$ with $w(e)=-1$ is oriented from white to black.
\end{itemize}
\end{defn}

\begin{rem}
It follows immediately that the orientation is \emph{Pfaffian}, see Aigner \cite[Section~10.1]{A07}.
\end{rem}

The \emph{oriented adjacency matrix} $\widetilde{A}$ is a square matrix of size $m=\lvert V\rvert$ with entries in $\{\pm 1\}$ such that $\widetilde{A}_{ij}=-\widetilde{A}_{ji}=1$ for every arrow $e\colon i\to j$ in the oriented graph and $\widetilde{A}_{ij}=0$ if there is no arrow between $i$ and $j$. By construction $\widetilde{A}$ is skew-symmetric, and if we reorder the vertices as in Equation~(\ref{Eqn:Bipartite}), then
\begin{align*}
    \widetilde{A}= \begin{pmatrix}
0&B \\
-B^T & 0
\end{pmatrix}\in\operatorname{Mat}_{m\times m}(\mathbb{Z}).
\end{align*}
The determinant of the $m\times m$ skew-symmetric matrix $\widetilde{A}$ is the square of an integer polynomial in the entries called the \emph{Pfaffian} $\operatorname{Pf}(\widetilde{A})$. In short, $\operatorname{Pf}(\widetilde{A})^2=\operatorname{det}(\widetilde{A})$. In particular, Equation~(\ref{Eqn:NumberMatch}) implies that
\begin{align*}
\left\lvert \operatorname{Pf}(\widetilde{A})\right\rvert = \PM(G)
\end{align*}
is equal to the number of perfect matchings of $G$.

The next statement is a variation of Proposition~\ref{Prop:Eigenvectors} and the proof carries over mutatis mutandis to our setting.

\begin{prop}
Let $t\neq 0$ be real number. A vector $v\in \mathbb{R}^{m_1}$ is an eigenvector of $BB^T$ of eigenvalue $t^2$ if and only if
\begin{align*}
\begin{pmatrix}
i tv\\ B^Tv
\end{pmatrix},
\begin{pmatrix}
-i tv\\ B^Tv
\end{pmatrix}
\in\mathbb{R}^{m_1+m_2}
\end{align*}
are eigenvectors of $\widetilde{A}$ of eigenvalues $it$ and $-it$.
\end{prop}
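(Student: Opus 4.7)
The plan is to mimic the proof of Proposition~\ref{Prop:Eigenvectors} essentially verbatim, with the only modification being the sign in the lower-left block of $\widetilde{A}$ and the appearance of the factor $i$ in the top entries of the candidate eigenvectors. A direct block multiplication gives, for any $v\in\mathbb{R}^{m_1}\setminus\{0\}$,
\begin{align*}
\widetilde{A}\begin{pmatrix} \pm it\,v\\ B^T v\end{pmatrix}
=\begin{pmatrix} 0 & B\\ -B^T & 0\end{pmatrix}\begin{pmatrix} \pm it\,v\\ B^T v\end{pmatrix}
=\begin{pmatrix} BB^T v\\ \mp it\,B^T v\end{pmatrix}.
\end{align*}
On the other hand, using $(\mp it)(\pm it)=-i^2 t^2=t^2$, one has
\begin{align*}
(\mp it)\begin{pmatrix} \pm it\,v\\ B^T v\end{pmatrix}
=\begin{pmatrix} t^2 v\\ \mp it\,B^T v\end{pmatrix}.
\end{align*}
Comparing the two right-hand sides entry by entry shows that the vector $\begin{pmatrix} \pm it v\\ B^T v\end{pmatrix}$ satisfies $\widetilde{A}u=(\mp it)u$ if and only if $BB^T v=t^2 v$, that is, if and only if $v$ is an eigenvector of $BB^T$ with eigenvalue $t^2$. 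Choosing the two signs yields the two claimed eigenvectors with eigenvalues $\pm it$.

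The only minor subtlety is that the eigenvector equations must be applied to nonzero vectors. Since $t\neq 0$, the top blocks $\pm itv$ are nonzero whenever $v\neq 0$, so the full vector is automatically nonzero; conversely, any hypothesised eigenvector of $BB^T$ is nonzero by definition. Thus the biconditional holds without degenerate cases.

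I expect no serious obstacle: this is a routine block-matrix calculation that the author has already indicated transfers \emph{mutatis mutandis} from Proposition~\ref{Prop:Eigenvectors}. The only bookkeeping to watch is that the factor $-1$ coming from the lower-left block $-B^T$ of $\widetilde{A}$ combines with the factor $i$ in the top entries to reproduce the identity $BB^T v=t^2 v$ on the top component, and to make the bottom component automatically consistent. Tracking which choice of sign $\pm it$ corresponds to eigenvalue $\mp it$ is the one place to exercise mild care, but it falls out directly from the computation above.
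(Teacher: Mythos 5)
Your proof is correct and is exactly the argument the paper intends: the paper gives no separate proof of this statement, saying only that the computation of Proposition~\ref{Prop:Eigenvectors} carries over \emph{mutatis mutandis}, which is precisely what you have written out. Your careful sign-tracking even reveals that the vector with top block $+itv$ is the one with eigenvalue $-it$ (and vice versa), so the pairing of vectors to eigenvalues in the statement as printed is reversed, although the set of eigenpairs is as claimed.
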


We conclude that the knowledge about spectral properties of the matrix $BB^T$ is sufficient to determine spectral properties of the skew-symmetric matrix $\widetilde{A}$.

\section{Chebyshev polynomials and orthogonal polynomials}

In this section we recall some basic facts about Chebyshev polynomials and orthogonal polynomials. Both notions will become crucial when studying the characteristic polynomials we have encountered in Section~\ref{Sec:Recursions}.

\subsection{Chebyshev polynomials}
\label{Sec:Chebyshev}

\begin{defn}[Chebyshev polynomials of the first kind] The sequence $(T_n)_{n\geq 0}$ of polynomials in $\mathbb{Z}[y]$ is defined by $T_0(y)=1$, $T_1(y)=y$ and $T_{n+1}(y)=2yT_n(y)-T_{n-1}(y)$.
\end{defn}

\begin{defn}[Chebyshev polynomials of the second kind] The sequence $(U_n)_{n\geq 0}$ of polynomials in $\mathbb{Z}[y]$ is defined by $U_0(y)=1$, $U_1(y)=2y$ and $U_{n+1}(y)=2yU_n(y)-U_{n-1}(y)$.
\end{defn}

The recursion for $(U_n)$ is consistent with the convention $U_{-1}(y)=0$. Notice that $T_n(y)$ and $U_n(y)$ are polynomials of degree $n$ for every $n\geq 1$. The leading coefficient of $T_n(y)$ is $2^{n-1}$, the leading coefficient of $U_n(y)$ is $2^n$.

In the article we will need some identities for Chebyshev polynomials. Before we state those identities, let us recall some trigonometric identities on which our calculations are built.

\begin{rem}[Product to sum formula]
\label{Rem:TrigIdentities}
Let $\theta$ and $\eta$ be real numbers. Then
\begin{align*}
    \textrm{(i)} \quad 2\cos(\theta)\cos(\eta)&=\cos(\theta-\eta)+\cos(\theta+\eta),\\
    \textrm{(ii)} \quad 2\sin(\theta)\cos(\eta)&=\sin(\theta-\eta)+\sin(\theta+\eta),\\
    \textrm{(iii)} \quad 2\sin(\theta)\sin(\eta)&=\cos(\theta-\eta)-\cos(\theta+\eta).
\end{align*}
\end{rem}
Especially, if we plug in $\theta=\eta$ in the first and second equation of the previous remark, then we obtain the double angle formulae $\cos(2\theta)=2\cos^2(\theta)-1=1-2\sin^2(\theta)$ and $\sin(2\theta)=2\cos(\theta)\sin(\theta)$.

Chebyshev polynomials arise in mathematics in many contexts and in particular, they are related to trigonometry. Equations~(i) and (ii) in Remark~\ref{Rem:TrigIdentities} imply that, for any $\theta\in\mathbb{R}$ and $n\in \mathbb{N}$, we may write
\begin{align}
    \label{Eqn:ChebyshevSine}
      T_{n}\left(\cos(\theta)\right)=\cos\left(n\theta\right),&&  U_{n}\left(\cos(\theta)\right)=\frac{\sin\left((n+1)\theta\right)}{\sin\left(\theta\right)}.
\end{align}

\begin{rem}
\label{Rem:RootsChebyshev}
Equation~(\ref{Eqn:ChebyshevSine}) allows us read off the roots of the Chebyshev polynomials of the first and second kind. Specifically, $T_n(y)$ vanishes for
\begin{align*}
    y_l=\cos\left(\frac{\pi(2l+1)}{2n}\right)&&(l=0,1,\ldots,n-1);
\end{align*}
since $\operatorname{deg}(T_n)=n$, these are all the roots of $T_n$. Similarly, the roots of $U_n$ are
\begin{align*}
    y_l=\cos\left(\frac{\pi l}{n+1}\right)&&(l=1,2,\ldots,n).
\end{align*}
\end{rem}

\begin{prop}
\label{Prop:IdentitiesChebyshev}
For every $n\geq 0$ we have
\begin{align*}
    \textnormal{(i)} \quad U_{2n+1}(y)=2U_n(y)T_{n+1}\left(y\right), &&
    \textnormal{(ii)} \quad U_{2n}(y)=U_n(y)^2-U_{n-1}(y)^2.
\end{align*}
\end{prop}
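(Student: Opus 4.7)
The plan is to derive both identities from the trigonometric representations in Equation~(\ref{Eqn:ChebyshevSine}), and then invoke a polynomial-identity argument to conclude. Writing $y = \cos(\theta)$ transforms both identities into equalities of trigonometric expressions that follow from the standard product-to-sum and double-angle rules in Remark~\ref{Rem:TrigIdentities}. Since $y = \cos(\theta)$ runs over the infinite set $[-1,1]$ as $\theta$ varies in $[0,\pi]$, any polynomial identity in $y$ that holds on this set must hold identically in $\mathbb{Z}[y]$.

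For part (i), I would substitute $y = \cos(\theta)$ into the right-hand side to obtain
\begin{align*}
2 U_n(\cos\theta)\,T_{n+1}(\cos\theta)
= 2\,\frac{\sin((n+1)\theta)}{\sin(\theta)}\,\cos((n+1)\theta)
= \frac{\sin(2(n+1)\theta)}{\sin(\theta)}
= U_{2n+1}(\cos\theta),
\end{align*}
using the double-angle formula $\sin(2\alpha) = 2\sin(\alpha)\cos(\alpha)$ with $\alpha = (n+1)\theta$, followed by the identity~(\ref{Eqn:ChebyshevSine}) for $U_{2n+1}$.

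For part (ii), I would first derive the factorisation $\sin^2(A) - \sin^2(B) = \sin(A+B)\sin(A-B)$ from the product-to-sum formula~(iii) of Remark~\ref{Rem:TrigIdentities} (and the even/odd symmetries of $\cos$), and then compute
\begin{align*}
U_n(\cos\theta)^2 - U_{n-1}(\cos\theta)^2
= \frac{\sin^2((n+1)\theta) - \sin^2(n\theta)}{\sin^2(\theta)}
= \frac{\sin((2n+1)\theta)\,\sin(\theta)}{\sin^2(\theta)}
= U_{2n}(\cos\theta).
\end{align*}
This gives the identity on the set $\{\cos(\theta) : \theta \in (0,\pi)\} = (-1,1)$, which is infinite, so both identities extend to equalities of polynomials in $\mathbb{Z}[y]$.

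There is no real obstacle here: the only minor care is that Equation~(\ref{Eqn:ChebyshevSine}) requires $\sin(\theta) \neq 0$, so the trigonometric computation is legitimate only for $\theta \in (0,\pi)$; but this still yields infinitely many values of $y$ at which the polynomial identities hold, which is enough to conclude. Alternatively, one could give a purely algebraic proof by induction on $n$ using the defining recursions of $(T_n)$ and $(U_n)$, but the trigonometric route is cleaner and matches the style already established in Section~\ref{Sec:Chebyshev}.
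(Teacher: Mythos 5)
Your proof is correct and follows essentially the same route as the paper: both parts are reduced via Equation~(\ref{Eqn:ChebyshevSine}) to the double-angle formula for sine (part (i)) and to the product-to-sum identity (iii) of Remark~\ref{Rem:TrigIdentities} (part (ii)), with the conclusion in $\mathbb{Z}[y]$ drawn from agreement on the infinite set $(-1,1)$. The only cosmetic difference is that you run the computation in part (ii) from $U_n^2-U_{n-1}^2$ towards $U_{2n}$ via the factorisation $\sin^2(A)-\sin^2(B)=\sin(A+B)\sin(A-B)$, whereas the paper starts from $2\sin^2(\theta)U_{2n}(\cos\theta)$; the underlying identities are the same.
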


\begin{proof}
The first statement follows from the double angle identity for $\sin((2n+2)\theta)$.

To prove the second statement, we use Equation (\ref{Eqn:ChebyshevSine}), the identity (iii) from Remark~\ref{Rem:TrigIdentities} and the double angle formula for cosine. We get, for all $\theta \in \left(0,\pi\right)$, that
\begin{align*}
    2\sin^2(\theta)U_{2n}(\cos(\theta))
    &= 2\sin(\theta)\sin\left((2n+1)\theta\right) \\
    &= \cos(2n\theta)-\cos\left((2n+2)\theta\right) \\
    &= \left[ 1-2\sin^2(n\theta) \right] -\left[ 1-2\sin^2\left((n+1)\theta\right) \right] \\
    &= 2\sin^2\left((n+1)\theta\right)-2\sin^2(n\theta) \\
    &= 2\sin^2(\theta)U_n(\cos(\theta))^2 - 2\sin^2(\theta)U_{n-1}(\cos(\theta))^2.
\end{align*}
This clearly proves that the statement holds for all $y \in (-1,1)$.
This is enough to show that the statement holds in $\mathbb{Z}[y]$, since the polynomials on both sides agree for infinitely many real numbers.
\end{proof}

\subsection{Orthogonal polynomial sequences}
\newcommand{\orthP}{P}

%In this section we recall some well-known facts about orthogonal polynomials which will become helpful in later sections of the article.

\begin{defn}[Orthogonal polynomial sequence]
\label{Def:OPSrec}
Given a monic polynomial sequence $(\orthP_n)_{n\geq 0}$ with $\operatorname{deg}(\orthP_n)=n$ and real coefficients, we say that $(\orthP_n)$ is an \emph{orthogonal polynomial sequence} (OPS) if and only if it satisfies a recurrence of the form
\begin{gather*}
    \orthP_{n+1}(x) = (x-\beta_n)\orthP_n(x)-\gamma_n \orthP_{n-1}(x),\ n\geq 1 \\
    \orthP_0(x)=1,\ \orthP_1(x)=x-\beta_0
\end{gather*}
for some real coefficients $(\beta_n)_{n\geq 0}$ and $(\gamma_n)_{n\geq 1}$ with $\gamma_n\neq 0$ for $n\geq 1$.

Given instead a polynomial sequence $(\widetilde{P}_n)_{n\geq 0}$ with $\operatorname{deg}(\widetilde{P}_n)=n$ and real coefficients, we say that $(\widetilde{P}_n)$ is an OPS if and only if the corresponding monic polynomial sequence $(\orthP_n)$, given by dividing each polynomial by its leading coefficient, satisfies a recurrence relation of the form given above.
\end{defn}

\begin{exam}
The sequences $(T_n)_{n\geq 0}$ and $(U_n)_{n\geq 0}$ of Chebyshev polynomials are OPS.
\end{exam}

Other equivalent definitions to Definition~\ref{Def:OPSrec} exist involving linear functionals, in particular the polynomials in an OPS are pairwise orthogonal with respect to a suitable inner product.

\begin{theo}[Christoffel--Darboux]
Given $(\orthP_n)_{n\geq 0}$ a monic OPS with recurrence as in Definition~\ref{Def:OPSrec}, we have the following identity
\[ \frac{\orthP_{n+1}(x)\orthP_n(y)-\orthP_n(x)\orthP_{n+1}(y)}{x-y} =\sum_{k=0}^n\gamma_{k+1}...\gamma_n \orthP_k(x)\orthP_k(y) \]
Taking $\lim_{y\to x}$ gives the \emph{confluent Christoffel--Darboux identity}
\[ P'_{n+1}(x)\orthP_n(x)-\orthP_{n+1}(x)P'_n(x) =\sum_{k=0}^n\gamma_{k+1}...\gamma_n(\orthP_k(x))^2. \]
\end{theo}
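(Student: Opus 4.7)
The plan is to introduce the \emph{Christoffel--Darboux kernel}
\begin{align*}
    K_n(x,y) := \frac{\orthP_{n+1}(x)\orthP_n(y)-\orthP_n(x)\orthP_{n+1}(y)}{x-y},
\end{align*}
which is a polynomial in $x$ and $y$ since the numerator vanishes whenever $x=y$, and then to prove the identity by induction on $n$. The engine of the induction is a one-step recurrence for $K_n$ that I would derive by applying the three-term relation of Definition~\ref{Def:OPSrec} to both $\orthP_{n+1}(x)$ and $\orthP_{n+1}(y)$ and subtracting. The $\beta_n$-terms cancel, yielding
\begin{align*}
    \orthP_{n+1}(x)\orthP_n(y) - \orthP_n(x)\orthP_{n+1}(y) = (x-y)\orthP_n(x)\orthP_n(y) + \gamma_n\bigl[\orthP_n(x)\orthP_{n-1}(y) - \orthP_{n-1}(x)\orthP_n(y)\bigr].
\end{align*}
Dividing by $x-y$ and recognising the bracketed expression as $(x-y)K_{n-1}(x,y)$ gives the clean recurrence $K_n(x,y) = \orthP_n(x)\orthP_n(y) + \gamma_n K_{n-1}(x,y)$.

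The base case is a direct check: since $\orthP_1(z) = z-\beta_0$, we have $\orthP_1(x)\orthP_0(y)-\orthP_0(x)\orthP_1(y) = x-y$, so $K_0(x,y) = 1 = \orthP_0(x)\orthP_0(y)$. From here I would telescope the recurrence to obtain
\begin{align*}
    K_n(x,y) = \sum_{k=0}^n \gamma_{k+1}\cdots\gamma_n \orthP_k(x)\orthP_k(y),
\end{align*}
with the convention that the empty product at $k=n$ equals $1$, which is precisely the claimed formula.

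For the confluent version, I would simply pass to the limit $y\to x$. Setting $f(y) := \orthP_{n+1}(x)\orthP_n(y) - \orthP_n(x)\orthP_{n+1}(y)$, one has $f(x)=0$, hence by L'H\^opital's rule
\begin{align*}
    \lim_{y\to x}\frac{f(y)}{x-y} = -f'(x) = \orthP'_{n+1}(x)\orthP_n(x) - \orthP_{n+1}(x)\orthP'_n(x).
\end{align*}
The right-hand side is polynomial in $y$ and converges immediately to $\sum_{k=0}^n\gamma_{k+1}\cdots\gamma_n\orthP_k(x)^2$.

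The only step that needs any care is the algebraic manipulation after substituting the three-term recurrence: one has to keep track of signs and recognise the shifted kernel $K_{n-1}$ on the right-hand side. Everything else is a short induction and a standard limit, both of which are routine.
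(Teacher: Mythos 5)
Your proof is correct. The paper itself states the Christoffel--Darboux theorem without proof, citing it implicitly as a classical fact, so there is no in-paper argument to compare against; what you give is the standard textbook derivation. Your key recurrence $K_n(x,y)=P_n(x)P_n(y)+\gamma_n K_{n-1}(x,y)$ is obtained correctly (the $\beta_n$-terms do cancel upon subtraction, and the leftover cross term is exactly $(x-y)K_{n-1}(x,y)$), the base case $K_0=1=P_0(x)P_0(y)$ is right, the telescoped product $\gamma_{k+1}\cdots\gamma_n$ with empty product $1$ at $k=n$ matches the stated sum, and the passage to the confluent identity via $\lim_{y\to x}f(y)/(x-y)=-f'(x)$ is sound since both sides are polynomials in $y$. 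No gaps.
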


\begin{rem}
\label{Rem:OPS-CDinequality}
In particular, if $\gamma_n>0$ for all $n\geq 1$ then
\[ P'_{n+1}(x)\orthP_n(x) - \orthP_{n+1}(x)P'_n(x) > 0 \]
\end{rem}

\begin{theo}
\label{Thm:OPSsimple}
The real zeros of any $\orthP_n$ in a monic OPS $(\orthP_n)_{n\geq 0}$ with recurrence as in Definition~\ref{Def:OPSrec} and $\gamma_n>0$ for all $n\geq 1$ are all simple.
\end{theo}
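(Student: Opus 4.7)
The plan is to argue by contradiction using the confluent Christoffel--Darboux identity stated just before the theorem, combined with Remark~\ref{Rem:OPS-CDinequality}.

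Suppose, for contradiction, that $x_0\in\mathbb{R}$ is a zero of $\orthP_n$ of multiplicity at least $2$. Then by definition $\orthP_n(x_0)=0$ and $\orthP_n'(x_0)=0$. Substituting $x=x_0$ into the confluent Christoffel--Darboux identity gives
\begin{align*}
P'_{n+1}(x_0)\orthP_n(x_0)-\orthP_{n+1}(x_0)P'_n(x_0)=\sum_{k=0}^n \gamma_{k+1}\cdots\gamma_n\bigl(\orthP_k(x_0)\bigr)^2,
\end{align*}
whose left-hand side is $0$ by the assumed double zero.

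The contradiction will come from showing the right-hand side is strictly positive. Since $\gamma_j>0$ for all $j\geq 1$, every coefficient $\gamma_{k+1}\cdots\gamma_n$ (with the $k=n$ term interpreted as the empty product $1$) is positive, so each summand is non-negative. The key observation is that the $k=0$ summand equals $\gamma_1\gamma_2\cdots\gamma_n\bigl(\orthP_0(x_0)\bigr)^2=\gamma_1\gamma_2\cdots\gamma_n>0$, because $\orthP_0\equiv 1$ in the monic OPS recurrence. Hence the right-hand side is strictly positive, contradicting the vanishing of the left-hand side.

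For the general (not necessarily monic) case, one reduces to the monic case by dividing each $\widetilde{P}_n$ by its leading coefficient, which does not affect the location or multiplicity of real zeros. The only step requiring any care is making sure the $k=0$ term is nonzero; that is immediate from $\orthP_0=1$ in Definition~\ref{Def:OPSrec}, so there is no real obstacle -- the whole argument is a short application of Remark~\ref{Rem:OPS-CDinequality} at a prospective double root.
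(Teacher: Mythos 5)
Your argument is correct and is essentially the paper's own proof: both evaluate the confluent Christoffel--Darboux identity at a prospective root and use $\gamma_j>0$ (via the $k=0$ term $\gamma_1\cdots\gamma_n(\orthP_0(x_0))^2>0$) to force the left-hand side to be nonzero. The only cosmetic difference is that the paper argues directly that $P'_{n+1}(x_0)\orthP_n(x_0)>0$ at a root of $\orthP_{n+1}$, whereas you phrase it as a contradiction at a double root of $\orthP_n$; the substance is identical.
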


\begin{proof}
Let $x_0$ be a real root of $\orthP_{n+1}$, then with $x=x_0$ Remark~\ref{Rem:OPS-CDinequality} becomes
\[P'_{n+1}(x_0)\orthP_n(x_0) > 0,\]
therefore $P'_{n+1}(x_0)\neq 0$, which proves the theorem.
\end{proof}

\begin{rem}
A more complicated proof involving linear functionals can show that any OPS as in Theorem~\ref{Thm:OPSsimple} has only real zeros. This is not included here.
\end{rem}

\section{Tridiagonal matrices and characteristic polynomials}
\label{Sec:Recursions}

\subsection{Tridiagonal matrices}

Each of the matrices $B_1$ and $B_2$ in Proposition~\ref{Prop:Tridiagonal} can be written as
\begin{align}
\label{Eqn:Tridiagonal}
\begin{pmatrix}
e_0 & 1 \\
1   & e_1 & 1 \\
    & 1   & e_2 & 1\\
    &      &\ddots&\ddots & \ddots \\
    & & & 1 & e_{n-2} & 1\\
    &    & &  & 1 & e_{n-1}
\end{pmatrix}
\end{align}
for some integers $e_0,\ldots,e_{n-1}$ (which will be different for $B_1$ or $B_2$ in general). By Proposition~\ref{Prop:DiagonalEntries}, the number $e_i$ is the number of white vertices that are adjacent to a black vertex $i$. In this section we wish to study characteristic polynomials of tridiagonal matrices as in Equation~(\ref{Eqn:Tridiagonal}). The knowledge of the eigenvectors and eigenvalues makes it possible to determine the number of perfect matchings of the graph as a product of the eigenvalues thanks to Kasteleyn's Theorem~\ref{Thm:Kasteleyn}.

Let us fix an infinite sequence of integers $e=(e_k)_{k\geq0}$.

\begin{defn}[Tridiagonal matrices]
\label{Def:Tridiagonal}
For a natural number $n$, we denote the tridiagonal $n\times n$ matrix with sub- and super-diagonal entries $(1,1,\ldots,1)$ and diagonal entries $(e_0,e_1,\ldots,e_{n-1})$ in Formula~(\ref{Eqn:Tridiagonal}) by $M_n$. We adopt the convention that $M_0$ is the empty matrix.
\end{defn}

 In the next lemma we use the convention $\charpoly{M_0}(x)=1$.

\begin{lemma}
\label{Lemma:Tridiagonal}
For every $n\geq 1$ we have
\begin{align}
\label{Eqn:Recursion}
\charpoly{M_{n+1}}(x)=(x-e_n)\charpoly{M_n}(x)-\charpoly{M_{n-1}}(x).
\end{align}
\end{lemma}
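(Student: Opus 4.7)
The plan is to compute $\charpoly{M_{n+1}}(x)=\det(xI_{n+1}-M_{n+1})$ by Laplace expansion along the last row. Because $M_{n+1}$ is tridiagonal, that row has only two nonzero entries in $xI_{n+1}-M_{n+1}$: the diagonal entry $(x-e_n)$ in position $(n+1,n+1)$ and the subdiagonal entry $-1$ in position $(n+1,n)$. So the expansion produces exactly two summands, and the strategy is to recognise each cofactor as the characteristic polynomial of a smaller matrix in the same family $\{M_k\}$.

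The diagonal cofactor is the easy one: deleting row $n+1$ and column $n+1$ of $xI_{n+1}-M_{n+1}$ leaves the top-left $n\times n$ block, which is precisely $xI_n-M_n$. Its determinant is $\charpoly{M_n}(x)$, and together with the factor $(x-e_n)$ this produces the first term on the right-hand side of (\ref{Eqn:Recursion}).

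The subdiagonal cofactor requires one further step. After removing row $n+1$ and column $n$, the resulting $n\times n$ matrix still has a very sparse last column: it is the restriction of column $n+1$ of $xI_{n+1}-M_{n+1}$ to rows $1,\ldots,n$, and the tridiagonal pattern means its only nonzero entry is the superdiagonal $-1$ in row $n$. A second Laplace expansion along this column immediately isolates the top-left $(n-1)\times(n-1)$ block, which equals $xI_{n-1}-M_{n-1}$, and therefore has determinant $\charpoly{M_{n-1}}(x)$. (When $n=1$ this reduces to the empty-matrix convention $\charpoly{M_0}(x)=1$, which is consistent with the stated base case.)

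The one point that needs care is sign bookkeeping. In the first expansion the subdiagonal cofactor carries a sign $(-1)^{(n+1)+n}$ together with the matrix entry $-1$; in the second expansion the chosen pivot sits in position $(n,n)$ of the residual matrix, contributing $(-1)^{n+n}$ along with another entry $-1$. Multiplying these four factors gives $-1$, so the subdiagonal contribution is $-\charpoly{M_{n-1}}(x)$. Adding the two contributions yields (\ref{Eqn:Recursion}). The main (only) obstacle is thus getting these signs right; the structural reduction is forced by the tridiagonal shape.
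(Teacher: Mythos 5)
Your proof is correct and follows exactly the route the paper indicates: the paper's proof is the one-line remark that the recursion follows by Laplace expansion of $\det(xI_{n+1}-M_{n+1})$ along the last column, and your row expansion is the same computation (the matrix is symmetric), carried out in full with the sign bookkeeping done correctly. Nothing to change.
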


Lemma~\ref{Lemma:Tridiagonal} is well-known and can be proved by expanding $\charpoly{M_{n+1}}(x)=\operatorname{det}(x I_{n+1}-M_{n+1})$ along the last column. It provides a recursion to compute the characteristic polynomials. Notice that Equation (\ref{Eqn:Recursion}) is coherent with the choice $\charpoly{M_{-1}}(x)=0$.

\begingroup
\newcommand{\xzero}{x_0}
\newcommand{\vzero}{v}
\newcommand{\vecdots}[5][\vdots]{
#2\\
#3\\
#1\\
#4\\
#5
}
\begin{prop}
\label{Prop:EntriesEigenvectors}
For any zero $\xzero$ of $\charpoly{M_n}(x)$ (i.e. $\xzero$ any eigenvalue of $M_n$), the vector
\[\vzero=\begin{pmatrix}
    \vecdots{\charpoly{M_0}(\xzero)}{\charpoly{M_1}(\xzero)}{\charpoly{M_{n-2}}(\xzero)}{\charpoly{M_{n-1}}(\xzero)}
\end{pmatrix}
\]
is an eigenvector of $M_n$ with eigenvalue $\xzero$ and $\ker(\xzero I_n-M_n)=\operatorname{span}\{\vzero\}$ is the eigenspace of $M_n$ for this eigenvalue.
\end{prop}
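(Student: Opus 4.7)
The plan is to verify the eigenvector equation directly using the three-term recursion from Lemma~\ref{Lemma:Tridiagonal}, and then argue simplicity of the eigenvalue by a rank computation on $x_0 I_n - M_n$.

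First I would set $p_k := \charpoly{M_k}(x_0)$ for $k=0,1,\ldots,n$, so that $v = (p_0,p_1,\ldots,p_{n-1})^T$ and $p_n = 0$ (since $x_0$ is a root of $\charpoly{M_n}$). Then I would compute the $i$-th entry of $M_n v$ for each $i\in\{0,1,\ldots,n-1\}$, using the tridiagonal shape of $M_n$:
\begin{align*}
(M_n v)_0 &= e_0 p_0 + p_1,\\
(M_n v)_i &= p_{i-1}+e_i p_i + p_{i+1}\quad (0<i<n-1),\\
(M_n v)_{n-1} &= p_{n-2}+e_{n-1}p_{n-1}.
\end{align*}
Lemma~\ref{Lemma:Tridiagonal} rewritten as $p_{k+1}+(e_k-x_0)p_k+p_{k-1}=0$ for $k\geq 1$ shows that the middle entries equal $x_0 p_i$. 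For $i=0$ the initial values $p_0=1$, $p_1=x_0-e_0$ give $e_0p_0+p_1=x_0=x_0p_0$, and for $i=n-1$ the same recursion applied at $k=n-1$ together with $p_n=0$ gives $p_{n-2}+e_{n-1}p_{n-1}=x_0p_{n-1}$. Hence $M_n v = x_0 v$; and $v\neq 0$ because $p_0=1$.

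For the one-dimensionality of $\ker(x_0 I_n - M_n)$, I would exhibit an $(n-1)\times(n-1)$ non-vanishing minor of $x_0 I_n-M_n$. Deleting the first row and the last column of the tridiagonal matrix $x_0 I_n-M_n$ yields a lower triangular matrix whose diagonal consists of the super-diagonal entries $-1,-1,\ldots,-1$ of $x_0 I_n -M_n$, so its determinant is $(-1)^{n-1}\neq 0$. Consequently $\operatorname{rank}(x_0 I_n-M_n)\geq n-1$, and since $x_0$ is an eigenvalue the rank is exactly $n-1$, i.e.\ the eigenspace is a line. Combined with the previous paragraph, this line must be $\operatorname{span}\{v\}$.

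I expect no genuine obstacle: the argument is essentially the standard one for Jacobi matrices. The only minor care needed is the treatment of the boundary indices $i=0$ and $i=n-1$, where one has to invoke the initial conditions $\charpoly{M_0}=1$, $\charpoly{M_1}(x)=x-e_0$ and the hypothesis $\charpoly{M_n}(x_0)=0$ respectively, rather than the recursion itself.
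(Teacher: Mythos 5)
Your verification that $M_nv=x_0v$ is essentially the paper's computation: both of you expand the recursion of Lemma~\ref{Lemma:Tridiagonal} at $x=x_0$ row by row, handling the first row via the initial values $\charpoly{M_0}=1$, $\charpoly{M_1}(x)=x-e_0$ and the last row via $\charpoly{M_n}(x_0)=0$. Where you genuinely diverge is the one-dimensionality of the eigenspace. The paper deduces it from Theorem~\ref{Thm:OPSsimple}: the polynomials $\charpoly{M_k}$ form an orthogonal polynomial sequence with $\gamma_k=1>0$, so the confluent Christoffel--Darboux identity shows every root of $\charpoly{M_n}$ is simple, and a simple root can only have a one-dimensional eigenspace. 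You instead exhibit a nonvanishing $(n-1)\times(n-1)$ minor of $x_0I_n-M_n$ (deleting the first row and last column leaves a triangular matrix whose diagonal consists of the off-diagonal entries $-1$), forcing $\operatorname{rank}(x_0I_n-M_n)=n-1$. This is correct and is in some ways preferable: it is more elementary, uses only that the sub- and superdiagonal entries are nonzero, needs no positivity hypothesis on the recurrence coefficients, and bypasses the orthogonal-polynomial machinery entirely. What the paper's route buys is the stronger conclusion that the roots of $\charpoly{M_n}$ are simple as roots of the polynomial (algebraic multiplicity one), a fact it wants anyway; your minor argument controls only the geometric multiplicity, though for these symmetric matrices the two coincide. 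Both proofs are complete.
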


\begin{proof}
\newcommand{\charsum}[5][&]{\charpoly{M_{#3}}(#2)#1+#1e_{#4}\charpoly{M_{#4}}(#2)#1+#1\charpoly{M_{#5}}(#2)}
\newcommand{\charsuma}[4][&]{\charpoly{M_{#3}}(#2)#1+#1e_{#4}\charpoly{M_{#4}}(#2)}
\newcommand{\charsumc}[4][&]{e_{#3}\charpoly{M_{#3}}(#2)#1+#1\charpoly{M_{#4}}(#2)}
We can rearrange the recurrence from Lemma~\ref{Lemma:Tridiagonal} to get for every $k\geq 0$
\[x\charpoly{M_k}(x)=\charsum[]{x}{k-1}{k}{k+1}\]
with the choice $\charpoly{M_{-1}}(x)=0$. We then have
\begingroup \setlength\arraycolsep{1pt}
\begin{align*}
M_n
\begin{pmatrix}
    \vecdots{\charpoly{M_0}(\xzero)}{\charpoly{M_1}(\xzero)}{\charpoly{M_{n-2}}(\xzero)}{\charpoly{M_{n-1}}(\xzero)}
\end{pmatrix}
&= \begin{pmatrix}
    0&+&\charsumc{\xzero}{0}{1}\\
    \charsum{\xzero}{0}{1}{2}\\
    &&\vdots\\
    \charsum{\xzero}{n-3}{n-2}{n-1}\\
    \charsuma{\xzero}{n-2}{n-1}
\end{pmatrix}\\
&= \begin{pmatrix}
    \charsum{\xzero}{-1}{0}{1}\\
    \charsum{\xzero}{0}{1}{2}\\
    &&\vdots\\
    \charsum{\xzero}{n-3}{n-2}{n-1}\\
    \charsum{\xzero}{n-2}{n-1}{n}
\end{pmatrix}
- \charpoly{M_n}(\xzero) \begin{pmatrix} \vecdots{0}{0}{0}{1} \end{pmatrix} \\
&=\xzero\vzero-0 = \xzero\vzero
\end{align*}
\endgroup
and $\vzero$ is non-zero since $\charpoly{M_0}(x)=1$, hence $v$ is an eigenvector of $M_n$ with eigenvalue $\xzero$.

Finally the eigenspace $\ker(\xzero I_n-M_n)$ can only have dimension 1 since all roots
of $\charpoly{M_n}(x)$ are simple by Theorem~\ref{Thm:OPSsimple}.
\end{proof}
\endgroup

\subsection{Characteristic polynomials via Chebyshev polynomials}

In this subsection we relate the characteristic polynomials describing entries of eigenvectors to the Chebyshev polynomials of the second kind from Section~\ref{Sec:Chebyshev}. Recall that the sequence $(U_n)$ satisfies the recursion $U_{n+1}=2yU_n-U_{n-1}$ for $n\geq 0$ and has initial values $U_{-1}=0$ and $U_0=1$.

Let $\mu$ be a parameter.

\begin{defn}[Shifted characteristic polynomials] We put
$f_n(y)=\charpoly{M_n}(2y+\mu)$.
\end{defn}

\begin{rem}
\label{Rem:TridiagonalShifted}
We can rewrite Lemma~\ref{Lemma:Tridiagonal} as
\begin{align*}
f_{n+1}(y)&=(2y+\mu-e_n)f_n(y)-f_{n-1}(y)\\
          &=[2yf_n(y)-f_{n-1}(y)]+(\mu-e_n)f_n(y).
\end{align*}
\end{rem}

\begin{lemma}
\label{Lemma:TridiagCheb}
For every $n \geq 0$ we have
\[
f_n=U_n+\sum_{k=0}^{n-1}(\mu-e_k)f_k U_{n-k-1}.
\]
\end{lemma}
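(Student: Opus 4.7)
The plan is to prove the identity by induction on $n$, using the recurrences already at our disposal: the split form of the recurrence for $f_n$ from Remark~\ref{Rem:TridiagonalShifted}, namely $f_{n+1} = [2y f_n - f_{n-1}] + (\mu - e_n) f_n$, together with the Chebyshev recurrence $U_{m+1} = 2y U_m - U_{m-1}$. The base case $n=0$ is immediate since $f_0 = 1 = U_0$ and the sum is empty; one may check $n = 1$ as a sanity step, where $f_1 = 2y + \mu - e_0 = U_1 + (\mu - e_0) f_0 U_0$.

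For the inductive step, I would assume the formula holds up to index $n$, and compute $2y f_n - f_{n-1}$ by plugging in the inductive expressions. The ``leading'' piece $2y U_n - U_{n-1}$ collapses via the Chebyshev recurrence to $U_{n+1}$. The remaining cross terms, after combining the $2y$-multiplied sum (ranging over $k = 0, \ldots, n-1$) with the sum from $f_{n-1}$ (ranging over $k = 0, \ldots, n-2$), pair up for $k = 0, \ldots, n-2$ as
\begin{align*}
(\mu - e_k) f_k \left[\, 2y\, U_{n-k-1} - U_{n-k-2} \,\right] = (\mu - e_k) f_k \, U_{n-k},
\end{align*}
again by the Chebyshev recurrence. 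The boundary term at $k = n-1$, namely $(\mu - e_{n-1}) f_{n-1} \cdot 2y\, U_0$, fits the same pattern because $U_1 = 2y$, so it equals $(\mu - e_{n-1}) f_{n-1} U_1$. Finally, the extra piece $(\mu - e_n) f_n = (\mu - e_n) f_n U_0$ from the splitting supplies exactly the top term $k = n$, yielding $f_{n+1} = U_{n+1} + \sum_{k=0}^{n}(\mu - e_k) f_k U_{n-k}$, which is the claim at level $n+1$.

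The main obstacle is not conceptual but bookkeeping: one has to line up the two sums (which range over different index sets) and see that the Chebyshev recurrence is precisely the mechanism that merges the interior cross terms, while the two boundary indices $k = n-1$ and $k = n$ are absorbed thanks to $U_1 = 2y$ and $U_0 = 1$ respectively. No further ingredient beyond the two recurrences is needed.
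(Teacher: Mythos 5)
Your proof is correct and follows essentially the same route as the paper: induction using the split recurrence from Remark~\ref{Rem:TridiagonalShifted} together with the Chebyshev recurrence to merge the cross terms, the only cosmetic difference being that the paper absorbs the boundary index $k=n-1$ via the convention $U_{-1}=0$ while you handle it explicitly through $2yU_0=U_1$. One small remark: since the inductive step consumes both $f_n$ and $f_{n-1}$, the case $n=1$ is a genuinely required second base case rather than a mere sanity check, but you do verify it, so the argument is complete.
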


\begin{proof}
We prove the lemma by induction on $n$. Notice that, by definition $f_{0}=1$ and $f_1=2y+(\mu-e_0)$, so that the statement is true for $n=0$ and $n=1$. For the induction step, we assume that this formula holds for both
$f_n$ and $f_{n-1}$. Using $U_{n-(n-1)-2}=U_{-1}=0$ we may rephrase the induction hypothesis as
\begin{align*}
f_{n-1}&=U_{n-1}+\sum_{k=0}^{n-2}(\mu-e_k) f_k U_{n-k-2}
       =U_{n-1}+\sum_{k=0}^{n-1}(\mu-e_k) f_k U_{n-k-2} \\
f_n   &=U_n+\sum_{k=0}^{n-1}(\mu-e_k) f_k U_{n-k-1}
\end{align*}
which can be used with Remark~\ref{Rem:TridiagonalShifted} to get
\begin{align*}
f_{n+1}&=\left[2yf_n-f_{n-1}\right]+(\mu-e_n)f_n \\
       &=\left[2yU_n + 2y\sum_{k=0}^{n-1}(\mu-e_k) f_k U_{n-k-1} - U_{n-1}-
         \sum_{k=0}^{n-1}(\mu-e_k) f_k U_{n-k-2}\right] + (\mu-e_n)f_n U_{n-n} \\
       &=\left[2yU_n - U_{n-1} +
         \sum_{k=0}^{n-1}(\mu-e_k)f_k (2yU_{n-k-1} - U_{n-k-2})\right]
         + (\mu-e_n)f_n U_{n-n} \\
       &=U_{n+1} + \sum_{k=0}^{n-1}(\mu-e_k)f_k U_{n-k} + (\mu-e_n)f_n U_{n-n}.\qedhere % \\
       %&=U_{n+1} + \sum_{k=0}^{n}(\mu-e_k)f_k U_{n-k}.\qedhere
\end{align*}
\end{proof}

\subsection{Fibonacci product polynomials}
\label{Sec:FibProd}

\newcommand{\fibP}{P}
\newcommand{\fibQ}{Q}

\begin{defn}[Fibonacci product polynomials]
\label{Def:FibProdPoly}
The \emph{Fibonacci product polynomials} are two monic polynomial sequences $(\fibP_n)_{n\geq 0}$ and $(\fibQ_n)_{n\geq 0}$ defined in terms of characteristic polynomials. For any $n\geq 1$,
\begin{itemize}
    \item[(i)] $\fibP_n(x)$ is the characteristic polynomials of the $n\times n$ matrix
    \begin{align}
\label{Eqn:FibP_def}
\begin{pmatrix} \trimat{1}{2}{3}{3}{3} \end{pmatrix};
\end{align}
\item[(ii)] $\fibQ_n(x)$ is the characteristic polynomial of the $n\times n$ matrix
\begin{align}
\label{Eqn:FibQ_def}\begin{pmatrix} \trimat{1}{3}{3}{3}{3} \end{pmatrix}.
\end{align}
\end{itemize}
We put $\fibP_0(x)=\fibQ_0(x)=1$.
\end{defn}

The polynomials in Definition~\ref{Def:FibProdPoly} are well-known in the literature. Information about them can be found for example in the On-Line Encyclopedia of Integers Sequences, see entry \href{https://oeis.org/A152063}{A152063} \cite{OEIS}. The name reflects a property that we will see later in Proposition~\ref{Prop:RootsFibonacciProdPoly}, namely that the product of the roots of any of these polynomials is a Fibonacci number. This property is related to Corollary~\ref{Coro:FibonacciProd}.

Notice that some authors define the Fibonacci product polynomials using tridiagonal matrices with entries $-1$ on the sub- and superdiagonal. Both definitions are equivalent by the next Lemma~\ref{Lemma:TridiagonalSimilar}; its proof is straightforward.

\begin{lemma}
\label{Lemma:TridiagonalSimilar}
\newcommand{\tmpd}{D}
\newcommand{\tmpm}{\widetilde{M}_n}
With $n\times n$ matrices
\[
\tmpd = \begin{pmatrix} \diamat{1}{-1}{1}{-1} \end{pmatrix},
\tmpm = \begin{pmatrix} \trimat{-1}{e_0}{e_1}{e_2}{e_3} \end{pmatrix}
\]
we have, with $M_n$ as in Lemma \ref{Lemma:Tridiagonal}, that
\[
\tmpd\tmpm\tmpd^{-1} = \begin{pmatrix} \trimat{1}{e_0}{e_1}{e_2}{e_3} \end{pmatrix} = M_n.
\]
So $\tmpm$ and $M_n$ are similar, especially $\charpoly{\tmpm}(x)=\charpoly{M_n}(x)$, meaning Lemma~\ref{Lemma:Tridiagonal} and the results that follow can be used for $\charpoly{\tmpm}(x)$. 
\end{lemma}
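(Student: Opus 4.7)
The plan is to verify the similarity identity $D\widetilde{M}_n D^{-1} = M_n$ by a direct entry-wise computation, after which the equality of characteristic polynomials is immediate from the standard fact that similar matrices share a characteristic polynomial. The key preliminary observation is that $D$ is involutive: since every diagonal entry of $D$ is $\pm 1$, we have $D^2 = I_n$ and hence $D^{-1} = D$, so the conjugation reduces to the two-sided product $D\widetilde{M}_n D$.

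Writing $D = \operatorname{diag}(d_0,d_1,\ldots,d_{n-1})$ with $d_k = (-1)^k$, the $(i,j)$-entry of $D\widetilde{M}_n D$ is $d_i\widetilde{m}_{ij}d_j$. I would then split into the three relevant cases. On the main diagonal, $d_i^2 e_i = e_i$, so the diagonal entries are unchanged. On the super- and sub-diagonal, $\widetilde{m}_{i,i+1} = \widetilde{m}_{i+1,i} = -1$ and $d_i d_{i+1} = (-1)^i(-1)^{i+1} = -1$, giving $d_i(-1)d_{i+1} = 1$, which converts every $-1$ into a $+1$. All remaining entries of $\widetilde{M}_n$ are zero and hence stay zero. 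Assembling these cases yields precisely the matrix $M_n$ of Definition~\ref{Def:Tridiagonal}.

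Since $\widetilde{M}_n$ and $M_n$ are similar, their characteristic polynomials coincide, so $\charpoly{\widetilde{M}_n}(x) = \charpoly{M_n}(x)$, and the recursive tools of Lemma~\ref{Lemma:Tridiagonal} apply verbatim to $\widetilde{M}_n$. There is no genuine obstacle here: the only thing to be careful about is the bookkeeping of signs in the off-diagonal product, and this is resolved by noting that consecutive entries of $D$ always have opposite signs.
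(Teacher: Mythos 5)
Your computation is correct and is precisely the straightforward argument the paper has in mind (the paper omits the proof, remarking only that it is straightforward): $D$ is an involutive diagonal sign matrix, conjugation multiplies the $(i,j)$-entry by $d_i d_j$, which fixes the diagonal and flips each off-diagonal $-1$ to $+1$ since consecutive $d_k$ have opposite signs. Nothing is missing.
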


\begin{rem}
\label{Rem:FibProdRec}
Each matrix in Definition~\ref{Def:FibProdPoly} is equal to an $M_n$ from Definition~\ref{Def:Tridiagonal}. Specifically, we have $e_n=3$ for all $n\geq 1$ and $e_0=2$ (in the case of Equation~(\ref{Eqn:FibP_def})) or $e_0=3$ (in the case of Equation~(\ref{Eqn:FibQ_def})) respectively. The recursion in Lemma~\ref{Lemma:Tridiagonal} implies that, for $n\geq 1$,
\begin{align*}
\fibP_{n+1}(x) &= (x-3)\fibP_n(x) - \fibP_{n-1}(x), \\
\fibQ_{n+1}(x) &= (x-3)\fibQ_n(x) - \fibQ_{n-1}(x)
\end{align*}
with $\fibP_0(x)=\fibQ_0(x)=1$ and $\fibP_1(x)=x-2$, $\fibQ_1(x)=x-3$.
\end{rem}

\begin{prop}
\label{Prop:FibProdCheb}
Let $n\geq 0$ be a natural number. Then
\begin{align*}
\fibP_n(2y+3) &= U_n(y) + U_{n-1}(y), \\
\fibQ_n(2y+3) &= U_n(y).
\end{align*}
\end{prop}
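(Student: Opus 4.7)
The plan is to deduce both identities from the recursions already established. Two routes are available, and I would choose the one via Lemma~\ref{Lemma:TridiagCheb}, since it handles both cases uniformly.

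For the $\fibQ_n$ identity, I would specialise Lemma~\ref{Lemma:TridiagCheb} to the matrix defining $\fibQ_n$ in Definition~\ref{Def:FibProdPoly}(ii), with $e_k = 3$ for every $k \geq 0$, and with the shift parameter $\mu = 3$. Then $f_n(y) = \charpoly{M_n}(2y+3) = \fibQ_n(2y+3)$, and every coefficient $(\mu - e_k)$ in the sum of Lemma~\ref{Lemma:TridiagCheb} vanishes. The lemma therefore collapses to $\fibQ_n(2y+3) = U_n(y)$, which is exactly the second claim.

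For the $\fibP_n$ identity, I would again set $\mu = 3$, but now with $e_0 = 2$ and $e_k = 3$ for $k \geq 1$, matching the matrix in Equation~(\ref{Eqn:FibP_def}). Here $(\mu - e_0) = 1$ while $(\mu - e_k) = 0$ for all $k \geq 1$, so only the $k=0$ summand in Lemma~\ref{Lemma:TridiagCheb} survives. Using $f_0 = 1$, the formula reduces to $\fibP_n(2y+3) = U_n(y) + U_{n-1}(y)$, giving the first claim.

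As a sanity check (and as a backup proof strategy, should I prefer to avoid invoking Lemma~\ref{Lemma:TridiagCheb}), both identities can be verified directly by induction on $n$: the substitution $x = 2y+3$ turns the recursion of Remark~\ref{Rem:FibProdRec} into $\fibP_{n+1}(2y+3) = 2y\,\fibP_n(2y+3) - \fibP_{n-1}(2y+3)$ and similarly for $\fibQ_n$, which matches the recursion $U_{n+1}(y) = 2yU_n(y) - U_{n-1}(y)$. The base cases $\fibQ_0(2y+3) = 1 = U_0(y)$, $\fibQ_1(2y+3) = 2y = U_1(y)$, $\fibP_0(2y+3) = 1 = U_0(y) + U_{-1}(y)$, $\fibP_1(2y+3) = 2y+1 = U_1(y) + U_0(y)$ complete the induction. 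There is no real obstacle here; the only subtlety is matching the recursion parameters correctly, and in the $\fibP$ case keeping track of the exceptional coefficient $e_0 = 2$ at the top-left corner.
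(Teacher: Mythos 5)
Your proposal is correct and follows essentially the same route as the paper: the paper also specialises Lemma~\ref{Lemma:TridiagCheb} with $\mu=3$ to the two matrices of Definition~\ref{Def:FibProdPoly}, observes that all coefficients $(\mu-e_k)$ vanish except the $k=0$ term in the $P_n$ case (where it equals $1$), and reads off both identities. The direct induction you sketch as a backup is a fine alternative but is not needed.
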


\begin{proof}
We use Lemma~\ref{Lemma:TridiagCheb}, with parameter $\mu=3$, on the matrices in Definition~\ref{Def:FibProdPoly} to get
\begin{align*}
\fibP_n(2y+3) &= U_n(y) + \sum_{k=1}^{n-1}(3-3)\fibP_k(2y+3)U_{n-k-1}(y) + (3-2)\fibP_0(2y+3)U_{n-1}(y), \\
\fibQ_n(2y+3) &= U_n(y) + \sum_{k=0}^{n-1}(3-3)\fibQ_k(2y+3)U_{n-k-1}(y).
\end{align*}
We get the required identities by removing zero terms and using $\fibP_0(2y+3)=1$. 
\end{proof}

\begin{lemma}
\label{Lemma:RelationPandQ}
For every $n\geq 0$ we have
\begin{align*}
     \fibP_n&=\fibQ_n+\fibQ_{n-1}, \\
(x-1)\fibQ_n&=\fibP_{n+1}+\fibP_n.
\end{align*}
\end{lemma}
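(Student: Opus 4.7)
The plan is to translate both identities into Chebyshev polynomial identities via Proposition~\ref{Prop:FibProdCheb}, which gives $\fibQ_n(2y+3)=U_n(y)$ and $\fibP_n(2y+3)=U_n(y)+U_{n-1}(y)$. Since both sides of each identity are polynomials in $x$, it suffices to verify equality after the substitution $x=2y+3$ for all real $y$, because the two sides will then agree on infinitely many points.

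For the first identity, substituting gives
\[
\fibQ_n(2y+3)+\fibQ_{n-1}(2y+3) = U_n(y)+U_{n-1}(y) = \fibP_n(2y+3),
\]
which is immediate. The $n=0$ case is handled by the convention $U_{-1}=0$ (equivalently $\fibQ_{-1}=0$, consistent with the empty-matrix convention $\charpoly{M_{-1}}(x)=0$).

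For the second identity, the substitution $x=2y+3$ turns $(x-1)\fibQ_n$ into $(2y+2)U_n(y)$, while the right-hand side becomes
\[
\fibP_{n+1}(2y+3)+\fibP_n(2y+3) = U_{n+1}(y)+2U_n(y)+U_{n-1}(y).
\]
The Chebyshev recursion $U_{n+1}(y)=2yU_n(y)-U_{n-1}(y)$ gives $U_{n+1}(y)+U_{n-1}(y)=2yU_n(y)$, so the right-hand side equals $2yU_n(y)+2U_n(y)=(2y+2)U_n(y)$, matching the left-hand side.

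I don't foresee a main obstacle; the work is essentially bookkeeping about the two Chebyshev formulas in Proposition~\ref{Prop:FibProdCheb} and a single application of the recursion for $(U_n)$. An alternative would be a direct induction on $n$ using the three-term recursions recorded in Remark~\ref{Rem:FibProdRec}, but the Chebyshev route is shorter and avoids base-case case-splitting beyond invoking $U_{-1}=0$.
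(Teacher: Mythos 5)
Your proof is correct and follows essentially the same route as the paper: the first identity is read off directly from Proposition~\ref{Prop:FibProdCheb}, and the second comes from the three-term recursion combined with the first identity (the paper performs this step in the variable $x$ using Remark~\ref{Rem:FibProdRec}, which is just the Chebyshev recursion you use after the substitution $x=2y+3$). Your explicit remark that the $n=0$ case rests on the convention $Q_{-1}=0$ (equivalently $U_{-1}=0$) is a small point the paper leaves implicit.
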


\begin{proof}
 The first statement follows immediately from the identities in Proposition~\ref{Prop:FibProdCheb}.

 For the second statement, we can rearrange the recurrence in Remark~\ref{Rem:FibProdRec} to get
 \begin{align*}
     \fibQ_{n+1}(x)+2\fibQ_n(x)+\fibQ_{n-1}(x)=(x-1)\fibQ_n(x)
 \end{align*}
 and we can use the first statement to also get
 \begin{equation*}
     (\fibQ_{n+1}(x)+\fibQ_n(x))+(\fibQ_n(x)+\fibQ_{n-1}(x))=\fibP_{n+1}(x)+\fibP_n(x). \qedhere
 \end{equation*}
\end{proof}

\begin{lemma}
\label{Lemma:TridiagFib}
For every $n\geq 0$ we have
\[
\charpoly{M_n}=\fibQ_n+\sum_{k=0}^{n-1}(3-e_k)\charpoly{M_k} \fibQ_{n-k-1}.
\]
\end{lemma}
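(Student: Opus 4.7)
The plan is to obtain the identity by specialising Lemma~\ref{Lemma:TridiagCheb} to the parameter $\mu=3$ and then translating Chebyshev polynomials of the second kind into Fibonacci product polynomials via Proposition~\ref{Prop:FibProdCheb}.

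Concretely, set $\mu=3$ in the definition $f_n(y)=\charpoly{M_n}(2y+\mu)$, so that $f_n(y)=\charpoly{M_n}(2y+3)$. Lemma~\ref{Lemma:TridiagCheb} then reads
\[
\charpoly{M_n}(2y+3) = U_n(y) + \sum_{k=0}^{n-1}(3-e_k)\,\charpoly{M_k}(2y+3)\,U_{n-k-1}(y).
\]
By Proposition~\ref{Prop:FibProdCheb} we have $\fibQ_m(2y+3)=U_m(y)$ for every $m\geq 0$, so the right-hand side becomes
\[
\fibQ_n(2y+3) + \sum_{k=0}^{n-1}(3-e_k)\,\charpoly{M_k}(2y+3)\,\fibQ_{n-k-1}(2y+3).
\]

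Thus the polynomial identity holds after the substitution $x=2y+3$ for every $y\in\mathbb{R}$, i.e.\ for infinitely many values of $x$. Since both sides are polynomials in $x$ which agree on an infinite set, they must coincide as elements of $\mathbb{Z}[x]$, proving the lemma. There is no real obstacle here: the only thing to notice is that the statement of Lemma~\ref{Lemma:TridiagCheb} has a free parameter $\mu$ which can be chosen at will, and $\mu=3$ is exactly the shift that turns $U_n$ into $\fibQ_n$.
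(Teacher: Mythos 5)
Your proof is correct and follows essentially the same route as the paper: specialise Lemma~\ref{Lemma:TridiagCheb} at $\mu=3$ and use Proposition~\ref{Prop:FibProdCheb} to replace each $U_m(y)$ with $Q_m(2y+3)$. The only addition is your explicit remark that agreement for all $x=2y+3$ forces the polynomial identity, which the paper leaves implicit.
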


\begin{proof}
In Lemma~\ref{Lemma:TridiagCheb} we may replace each $U$ with a $\fibQ$ as given in Proposition~\ref{Prop:FibProdCheb} then substitute $\mu=3$ to get the expression.
\end{proof}

\subsection{Roots of Fibonacci product polynomials} In this subsection we determine the spectrum of the matrices in Equations~(\ref{Eqn:FibP_def}) and (\ref{Eqn:FibQ_def}); in other words, we determine the roots of the Fibonacci product polynomials.

\begin{prop}[Folklore]
\label{Prop:RootsFibonacciProdPoly}
Let $n\geq 0$.
\begin{itemize}
\item[(i)] The roots of the Fibonacci product polynomial $\fibP_n$ are
\begin{align*}
    x_l=4\cos^2\left(\frac{\pi l}{2n+1}\right)+1&&(l=1,2,\ldots,n).
\end{align*}
    \item[(ii)] The roots of the Fibonacci product polynomial $\fibQ_n$ are
\begin{align*}
    x_l=4\cos^2\left(\frac{\pi l}{2(n+1)}\right)+1&&(l=1,2,\ldots,n).
\end{align*}
\end{itemize}
\end{prop}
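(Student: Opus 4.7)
The plan is to reduce each part to finding zeros of a Chebyshev polynomial (or a simple combination of two Chebyshevs) via the identities
\[
\fibP_n(2y+3)=U_n(y)+U_{n-1}(y), \qquad \fibQ_n(2y+3)=U_n(y)
\]
from Proposition~\ref{Prop:FibProdCheb}. After substituting $x=2y+3$ and $y=\cos(\theta)$ with $\theta\in(0,\pi)$, the problem becomes a trigonometric one, and the closed forms asserted in the proposition will emerge from the double angle identity $\cos(2\phi)=2\cos^2(\phi)-1$.

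For part~(ii), the roots of $\fibQ_n$ are direct consequences of the known roots of $U_n$ (Remark~\ref{Rem:RootsChebyshev}). From $U_n(y_l)=0$ with $y_l=\cos(\pi l/(n+1))$ for $l=1,\ldots,n$ I get roots $x_l=2\cos(\pi l/(n+1))+3$ of $\fibQ_n$. Setting $\phi_l=\pi l/(2(n+1))$ and applying the double angle formula rewrites this as $x_l=4\cos^2(\phi_l)+1$, exactly the claimed expression.

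For part~(i) the key step is to use the trigonometric form $U_m(\cos(\theta))=\sin((m+1)\theta)/\sin(\theta)$ from Equation~(\ref{Eqn:ChebyshevSine}) to compute
\[
U_n(\cos(\theta))+U_{n-1}(\cos(\theta))=\frac{\sin((n+1)\theta)+\sin(n\theta)}{\sin(\theta)}=\frac{2\sin\!\bigl(\tfrac{(2n+1)\theta}{2}\bigr)\cos\!\bigl(\tfrac{\theta}{2}\bigr)}{\sin(\theta)},
\]
by way of the sum-to-product formula $\sin(A)+\sin(B)=2\sin((A+B)/2)\cos((A-B)/2)$. For $\theta\in(0,\pi)$ both $\sin(\theta)\neq 0$ and $\cos(\theta/2)>0$, so the only zeros come from $\sin((2n+1)\theta/2)=0$, giving $\theta=2\pi l/(2n+1)$ for $l=1,\ldots,n$. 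Thus the zeros of $\fibP_n$ are $x_l=2\cos(2\pi l/(2n+1))+3$, and a final application of the double angle formula with $\phi_l=\pi l/(2n+1)$ turns this into $x_l=4\cos^2(\phi_l)+1$.

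To finish, I would verify that in each case the $n$ values of $x_l$ are pairwise distinct: for $l=1,\ldots,n$ the angles $\pi l/(2n+1)$ (respectively $\pi l/(2(n+1))$) lie in $(0,\pi/2)$, where cosine, and hence cosine squared, is strictly monotone. Since $\fibP_n$ and $\fibQ_n$ are monic polynomials of degree $n$, these $n$ distinct roots account for all of them. The only point requiring real care, rather than a serious obstacle, is checking that the spurious factor $\cos(\theta/2)$ in the sum-to-product identity does not vanish on $(0,\pi)$, so that no zeros are lost or gained in the identification.
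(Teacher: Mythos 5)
Your proposal is correct and follows essentially the same route as the paper: reduce to Chebyshev polynomials via $\fibP_n(2y+3)=U_n(y)+U_{n-1}(y)$ and $\fibQ_n(2y+3)=U_n(y)$, work trigonometrically, and finish with the double angle formula plus a distinctness-and-degree count. The only cosmetic difference is in part (i), where you factor $\sin((n+1)\theta)+\sin(n\theta)$ by sum-to-product while the paper simply verifies that the candidate angles annihilate that sum; both are valid.
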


\newcommand{\tmpargP}[1]{\frac{#1 \pi l}{2n+1}}
\begin{proof} In both parts we use the identity $2\cos(\theta)+3=4\cos^2(\theta/2)+1$ for all $\theta\in\mathbb{R}$ which follows from the double angle formula.

(i) For every $\theta$ we have
\begin{align*}
    \fibP_n(2\cosB{\theta}+3)\sinB{\theta}
    &=\left[U_n(\cosB{\theta})+U_{n-1}(\cosB{\theta})\right]\sinB{\theta}
    =\sinB{(n+1)\theta}+\sinB{n\theta}.
\end{align*}
Let $l\in[1,n]$. To show that $x_l$ is a root of $\fibP_n$, we substitute $\theta=\tmpargP{2}$, for which $\sin(\theta)\neq 0$, in the previous equation and obtain
\begin{align*}
    \fibP_n\left(2\cosB{\tmpargP{2}}+3\right)\sinB{\tmpargP{2}}
    &=\sinB{\tmpargP{2(n+1)}}+\sinB{\tmpargP{2n}}\\
    &=\sinB{\pi l+\tmpargP{}}+\sinB{\pi l-\tmpargP{}}=0.
\end{align*}
Notice that the $x_l$ with $l\in[1,n]$ are pairwise distinct. There cannot exist other roots since $\operatorname{deg}(\fibP_n)=n$.

(ii) We use the relation $\fibQ_n(2y+3)=U_n(y)$. By Remark~\ref{Rem:RootsChebyshev}
the real numbers $y_l=\cosB{\frac{\pi l}{n+1}}$ with $l=1,2,\ldots,n$ are the roots of $U_n(y)$. The double angle implies $2y_l+3=4\cosB[^2]{\frac{\pi l}{2n+2}}+1$.
\end{proof}

\begin{rem}
The determinant of a matrix is equal to the product of the eigenvalues. Combining Proposition~\ref{Prop:RootsFibonacciProdPoly} with Corollary~\ref{Coro:FibonacciProd} we see that the determinant of the matrix (\ref{Eqn:FibP_def}) is equal to $F_{2n+1}$ and that the determinant of the matrix (\ref{Eqn:FibQ_def}) is equal to $F_{2n+2}$.
\end{rem}

\subsection{Divisibility properties of Fibonacci product polynomials}

\begin{prop}
\label{Prop:Divisible}
We have $\fibQ_{r}\mid \fibQ_{2r+1}$ and $\fibP_{r}\mid \fibQ_{2r}$ in $\mathbb{Z}[x]$ for all integers $r\geq 0$.
\end{prop}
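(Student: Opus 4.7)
The plan is to substitute $x=2y+3$ and pull the problem back to identities among Chebyshev polynomials of the second kind, where the needed factorisations are already recorded in Proposition~\ref{Prop:IdentitiesChebyshev}. Proposition~\ref{Prop:FibProdCheb} lets us translate each Fibonacci product polynomial into Chebyshev form: $\fibQ_n(2y+3)=U_n(y)$ and $\fibP_n(2y+3)=U_n(y)+U_{n-1}(y)$.

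For the first divisibility, I would apply the identity $U_{2r+1}(y)=2U_r(y)T_{r+1}(y)$ from Proposition~\ref{Prop:IdentitiesChebyshev}(i) to obtain
\begin{equation*}
\fibQ_{2r+1}(2y+3)=U_{2r+1}(y)=2\,U_r(y)\,T_{r+1}(y)=2\,\fibQ_r(2y+3)\,T_{r+1}(y).
\end{equation*}
After the substitution $x=2y+3$, this shows that $\fibQ_r(x)$ divides $\fibQ_{2r+1}(x)$ in $\mathbb{Q}[x]$. For the second divisibility, I would use $U_{2r}(y)=U_r(y)^2-U_{r-1}(y)^2$ from Proposition~\ref{Prop:IdentitiesChebyshev}(ii) and factor the difference of squares:
\begin{equation*}
\fibQ_{2r}(2y+3)=\bigl(U_r(y)+U_{r-1}(y)\bigr)\bigl(U_r(y)-U_{r-1}(y)\bigr)=\fibP_r(2y+3)\bigl(U_r(y)-U_{r-1}(y)\bigr),
\end{equation*}
which again gives the claimed divisibility in $\mathbb{Q}[x]$.

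The only remaining point, and the one I expect to be the main (mild) obstacle, is to upgrade divisibility from $\mathbb{Q}[x]$ to $\mathbb{Z}[x]$, since the cofactors $2T_{r+1}((x-3)/2)$ and $U_r((x-3)/2)-U_{r-1}((x-3)/2)$ are a priori only rational polynomials in $x$. This is automatic from the fact that $\fibP_r$ and $\fibQ_r$ are monic elements of $\mathbb{Z}[x]$: the division algorithm in $\mathbb{Z}[x]$ with a monic divisor produces a quotient and remainder in $\mathbb{Z}[x]$, and these must coincide (by uniqueness over $\mathbb{Q}[x]$) with the rational cofactor and zero remainder extracted above. Hence both cofactors lie in $\mathbb{Z}[x]$, completing the proof.
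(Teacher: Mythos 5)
Your proposal is correct and follows essentially the same route as the paper: substitute $y=(x-3)/2$, apply the two identities of Proposition~\ref{Prop:IdentitiesChebyshev}, and translate back via Proposition~\ref{Prop:FibProdCheb}. The only cosmetic difference is in the integrality step, where the paper observes directly that $2T_{r+1}\left(\tfrac{x-3}{2}\right)\in\mathbb{Z}[x]$ from the Chebyshev recursion and writes the second cofactor as $Q_r(x)-Q_{r-1}(x)$, while your appeal to the division algorithm with a monic divisor is an equally valid (and slightly more general) way to reach the same conclusion.
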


\begin{proof}
 We plug $y=(x-3)/2$ in Proposition~\ref{Prop:IdentitiesChebyshev}. The first equation yields
 \begin{align*}
     \fibQ_{2r+1}(x)=2\fibQ_r(x)T_{r+1}\left(\tfrac{x-3}{2}\right).
 \end{align*}
 Using its recursive definition it is easy to see that $2T_r(y)$ is a polynomial in $x$ with integer coefficients when we plug in $y=(x-3)/2$.

 The second equation in Proposition~\ref{Prop:IdentitiesChebyshev} yields
 \begin{align*}
     \fibQ_{2r}(x)=\fibQ_{r}(x)^2-\fibQ_{r-1}(x)^2=\left[\fibQ_{r}(x)-\fibQ_{r-1}(x)\right]\left[\fibQ_{r}(x)+\fibQ_{r-1}(x)\right];
 \end{align*}
 this implies the claim because $\fibQ_{r}(x)+\fibQ_{r-1}(x)=\fibP_r(x)$.
\end{proof}

 \section{Applications}

\subsection{Recursions for graphs with black turns}
\label{Subsec:BlackTurn}

Suppose that $G$ is a snake graph coloured such that all turns are black. By Proposition~\ref{Prop:Tridiagonal} the matrix $BB^T$ is a $2\times 2$ block diagonal matrix whose diagonal blocks $B_1$ and $B_2$ are tridiagonal matrices. The matrix $B_1$ is parametrised by the upper boundary and $B_2$ is parametrised by the lower boundary. We denote the sizes of the blocks by $m_1$ and $m_2$. Recall each diagonal entry is equal to the number of edges incident to the corresponding black vertex.

For a moment we focus on one of the two boundaries, say $B_1$, and label its black vertices by $0,1,\ldots,m_1-1$ in order.

\begin{defn}[Neighbour counting]
\label{Def:NeighbourCounting}
For an integer $l$ with $0\leq l\leq m_1-1$ we denote by $e_l$ the number of neighbours of the black vertex $l$.
\end{defn}

\begin{defn}[Principal characteristic polynomials]
\label{Def:PrincChar}
For an integer $l$ with $1 \leq l\leq m_1$ we denote by $\charpoly{l}=\charpoly{l}(x)$ the characteristic polynomial of the principal submatrix of $B_1$ on rows and columns indexed by vertices $0,1,\ldots,l-1$. We use the convention $\charpoly{0}=1$.
\end{defn}

The following statement follows immediately from Lemma~\ref{Lemma:Tridiagonal}.

\begin{rem}
\label{Rem:CharPolyRecursion}
For any $l$ with $1 \leq l\leq m_1-1$ we have
\begin{align*}
\charpoly{l+1} =(x-e_l)\charpoly{l} - \charpoly{l-1}.
\end{align*}
\end{rem}

If $x_0$ is an eigenvalue of $B_1$, then $(\charpoly{l}(x_0))_{0\leq l\leq m_1-1}$ becomes an eigenvector of $B_1$ by Proposition~\ref{Prop:EntriesEigenvectors}. Remark~\ref{Rem:CharPolyRecursion} yields recursions for the entries of the eigenvector.

\subsection{Horizontal snake graphs}

Now we would like to apply the results from Section~\ref{Sec:Recursions} to various classes of snake graphs. As our first application, we reprove Kasteleyn and Temperley--Fisher's Theorem~\ref{Thm:Horizontal} about the eigenvalues of horizontal snake graphs $H_n$ before giving more original results in the subsequent subsections.

Let $n\geq 1$. Recall that the horizontal snake graph $H_n$ is formed from $n$ square tiles aligned in horizontal direction. It contains $2n+2$ vertices, $n+1$ on the upper boundary and $n+1$ on the lower boundary.

\begin{coro}
\label{Coro:Horizontal} Suppose that $G=H_n$. Then the characteristic polynomial of $BB^T$ equals
\label{Coro:PolyHorizontal}
\begin{align*}
    \charpoly{H_n}(x)=\begin{cases}
    (x-1)\fibQ_r(x)^2&\textrm{if $n=2r$ is even};\\
    \fibP_{r+1}(x)^2&\textrm{if $n=2r+1$ is odd}.\\
    \end{cases}
\end{align*}
\end{coro}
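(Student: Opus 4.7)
The plan is to apply the block-diagonal decomposition of Proposition~\ref{Prop:Tridiagonal} together with the neighbour count of Definition~\ref{Def:NeighbourCounting} to $H_n$. Since the horizontal snake graph contains no turns, the hypothesis that ``all turns are black'' is vacuously satisfied and $BB^T=\operatorname{diag}(B_1,B_2)$, where each block is tridiagonal and indexed by the black vertices of one of the two boundaries. By Proposition~\ref{Prop:DiagonalEntries}, each diagonal entry equals the number of neighbours in $G$ of the corresponding vertex; inspection of $H_n$ reveals that this count is $2$ at a corner of the rectangle and $3$ at every non-corner boundary vertex.

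First I would fix the bipartition so that the upper-left corner is black and then split into cases by the parity of $n$. When $n=2r+1$ is odd, the upper boundary contains $r+1$ black vertices, of which only the upper-left is a corner, giving $B_1$ the diagonal $(2,3,\dots,3)$; the lower boundary similarly contributes $r+1$ black vertices, with only the lower-right being a corner, so $B_2$ has diagonal $(3,\dots,3,2)$. Both matrices have size $r+1$, and reversing the vertex order transforms $B_2$ into $B_1$. By Definition~\ref{Def:FibProdPoly}(i) this is exactly the matrix defining $P_{r+1}$, so $\charpoly{B_1}=\charpoly{B_2}=P_{r+1}$ and thus $\charpoly{H_n}=P_{r+1}^2$.

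When $n=2r$ is even, the upper boundary has $r+1$ black vertices with \emph{both} extremes being corners, giving $B_1$ the diagonal $(2,3,\dots,3,2)$, while the lower boundary has $r$ interior black vertices, producing the diagonal $(3,\dots,3)$ for $B_2$. The latter is precisely the defining matrix of $Q_r$, so $\charpoly{B_2}=Q_r$. For $\charpoly{B_1}$, the first $r$ principal submatrices coincide with the matrices of $P_1,\dots,P_r$, so applying Remark~\ref{Rem:CharPolyRecursion} with the modified final entry $e_r=2$ gives
\[
\charpoly{B_1}(x)=(x-2)P_r(x)-P_{r-1}(x)=\bigl[(x-3)P_r(x)-P_{r-1}(x)\bigr]+P_r(x)=P_{r+1}(x)+P_r(x),
\]
which by Lemma~\ref{Lemma:RelationPandQ} equals $(x-1)Q_r(x)$. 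Multiplying yields $\charpoly{H_n}=(x-1)Q_r^2$.

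The main bookkeeping hurdle is tracking which of the four rectangle corners are black as a function of $n \bmod 2$, and correspondingly which extremes of each boundary contribute a degree-$2$ diagonal entry. Once the diagonal patterns of $B_1$ and $B_2$ are identified, the identification with $P$ and $Q$ is essentially immediate from their defining recursions together with the identity $P_{r+1}+P_r=(x-1)Q_r$ of Lemma~\ref{Lemma:RelationPandQ}.
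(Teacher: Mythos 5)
Your proposal is correct and follows essentially the same route as the paper: invoke Proposition~\ref{Prop:Tridiagonal} (noting $H_n$ has no turns), read off the diagonals $(3,\ldots,3)$, $(2,3,\ldots,3)$, $(2,3,\ldots,3,2)$ from the neighbour counts, identify them with $Q_r$ and $P_{r+1}$, and reduce the remaining block via $(x-2)P_r-P_{r-1}=P_{r+1}+P_r=(x-1)Q_r$ using Lemma~\ref{Lemma:RelationPandQ}. The only cosmetic difference is that you pin down explicitly which boundary carries which diagonal pattern, whereas the paper leaves the labelling of $B_1$ and $B_2$ unspecified.
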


\begin{proof}
The graph $H_n$ has no turns, see Definition~\ref{Def:Turns}, because every vertex has exactly $3$ neighbours unless it is incident with the first or the last vertical edge. In particular, if we use any of the two ways to colour the graph in a bipartite fashion, every turn is black. So the matrix $BB^T$ is a $2\times 2$ block diagonal matrix with blocks indexed by the black vertices in the upper and lower boundary, respectively, according to Proposition~\ref{Prop:Tridiagonal}.

Suppose that $n=2r$ with $r\geq 1$. One of the diagonal blocks, say $B_1$, is associated to a boundary component having exactly $r$ black vertices each of which has exactly $3$ neighbours in $G$. So $\charpoly{B_1}(x)=\fibQ_r(x)$ by Definition~\ref{Def:FibProdPoly}. The other block, $B_2$, is associated to a boundary component with exactly $r+1$ black vertices. The first and the last black vertex have $2$ neighbours each, whereas the other vertices have $3$ neighbours each. Lemma~\ref{Lemma:Tridiagonal} implies $\charpoly{B_2}(x)=(x-2)\fibP_{r}(x)-\fibP_{r-1}(x)=\fibP_{r+1}(x)+\fibP_{r}(x)$, which is equal to $(x-1)\fibQ_r(x)$ by Lemma~\ref{Lemma:RelationPandQ}. Hence $\charpoly{H_n}(x)=\charpoly{B_1}(x)\charpoly{B_2}(x)=(x-1)\fibQ_r(x)^2$.

Now suppose that $n=2r+1$ with $r\geq 0$. Both diagonal blocks, $B_1$ and $B_2$, are associated to boundary components with exactly $r+1$ black vertices. On each boundary component there is one black vertex with $2$ neighbours, located at the beginning or at the end. The other vertices have $3$ neighbours each. It follows that $\charpoly{B_1}(x)=\charpoly{B_2}(x)=\fibP_{r+1}(x)$ which implies the claim.
\end{proof}

 Especially, the roots of the characteristic polynomial $\charpoly{H_n}$, except for the root $1$ in the case when $n$ is even, are equal to the roots of the Fibonacci product polynomials given in Proposition~\ref{Prop:RootsFibonacciProdPoly}. Their multiplicities are twice as large. Hence, Corollary~\ref{Coro:Horizontal} yields another proof of Kasteleyn and Temperley--Fisher's Theorem~\ref{Thm:Horizontal} and, together with Proposition~\ref{Prop:RootsFibonacciProdPoly}, it yields another proof of the Fibonacci product formula in Corollary~\ref{Coro:FibonacciProd}.

\subsection{ \lrev-shaped snake graphs}

In this subsection are interested in \lrev-shaped snake graphs. In every such graph there is exactly one turning tile having a $2$-turn and a $4$-turn. We colour the vertices of $G$ such that both turns are black. The matrix $BB^T$ is a block diagonal matrix, and specifically, the first and last diagonal entries in $B_1$ and $B_2$ will be either $2$ or $3$, and the entry at the index corresponding to the turn will be $2$ for the upper boundary and $4$ for the lower boundary.

\begin{theo}
\label{Theo:L}
The characteristic polynomials of $BB^T$ for the \lrev-shaped snake graphs are given by the expressions
\begin{align*}
\charpoly{\lrev_{2m,2n}}&=\left[\fibQ_{m+n-1}-\fibQ_{m-1}\fibQ_{n-1}\right]\left[(x-1)\fibQ_{m+n}+\fibP_m\fibP_n\right],\\
\charpoly{\lrev_{2m+1,2n+1}}&=\left[(x-1)\fibQ_{m+n}-\fibP_{m}\fibP_{n}\right]\left[\fibQ_{m+n+1}+\fibQ_{m}\fibQ_{n}\right],\\
\charpoly{\lrev_{2m,2n+1}}&=\left[\fibP_{m+n}-\fibQ_{m-1}\fibP_{n}\right]\left[\fibP_{m+n+1}+\fibP_{m}\fibQ_{n}\right].
\end{align*}
\end{theo}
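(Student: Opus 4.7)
The plan is to apply Proposition~\ref{Prop:Tridiagonal} to obtain the factorisation $\charpoly{\lrev_{r,s}}(x)=\charpoly{B_1}(x)\charpoly{B_2}(x)$, where $B_1$ and $B_2$ are tridiagonal matrices indexed by the black vertices of the upper and lower boundary respectively. Since $\lrev_{r,s}$ has a unique turning tile, its $4$-turn and $2$-turn are opposite corners of this tile and so lie in the same bipartition class; I colour both black, placing the $4$-turn on the upper boundary and the $2$-turn on the lower boundary. For each parity case I will determine the sizes of $B_1$ and $B_2$ and, via Proposition~\ref{Prop:DiagonalEntries}, list the diagonal entries: every entry equals $3$ (for an interior black vertex of the snake) except at up to three kinds of positions where the entry differs from $3$ by $\pm 1$. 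Concretely, the $4$-turn contributes a $4$ to $B_1$, the $2$-turn contributes a $2$ to $B_2$, and each of the two snake endpoints that happens to be black contributes a $2$ to whichever boundary matrix it falls in. The parities of $r$ and $s$ determine which endpoints are black and which boundary they lie in.

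The core computation is to expand $\det(xI-B_i)$ using multilinearity of the determinant in its rows. Writing every non-standard diagonal entry as $(x-3)+(3-e_l)$ and distributing, one obtains
\[
\charpoly{B_i}(x)\;=\;\sum_{S}\Bigl(\prod_{l\in S}(3-e_l)\Bigr)\,\det\!\bigl((xI-B_i^{(3)})|_{S^c}\bigr),
\]
where $S$ runs over the subsets of the non-standard positions, $B_i^{(3)}$ denotes the auxiliary tridiagonal matrix with every diagonal entry replaced by $3$, and $|_{S^c}$ denotes the principal submatrix on the complementary index set. Deleting a set of rows and columns from a tridiagonal matrix produces a block diagonal matrix whose tridiagonal blocks still have unit off-diagonals and all diagonal entries equal to $3$; by Definition~\ref{Def:FibProdPoly}(ii), the characteristic polynomial of each such block is a Fibonacci product polynomial $\fibQ_k$. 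Hence every summand is, up to the sign $(3-e_l)=\pm 1$, either a single $\fibQ_k$ or a product $\fibQ_{k_1}\fibQ_{k_2}$ whose indices are determined by the sizes of the blocks surrounding the deleted positions. Since there are at most three non-standard positions, each expansion has at most eight terms.

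The final step is to simplify the polynomial thus obtained. A uniform pattern emerges: subsets $S$ that do not contain the internal turn position contribute terms of the shape $\fibQ_{k+1}$, $\fibQ_k$, $\fibQ_{k-1}$ which telescope via $\fibQ_{k+1}+2\fibQ_{k}+\fibQ_{k-1}=(x-1)\fibQ_{k}$ (a consequence of the recurrence $\fibQ_{k+1}+\fibQ_{k-1}=(x-3)\fibQ_{k}$ from Remark~\ref{Rem:FibProdRec}); subsets that do contain the turn position contribute the four products $\fibQ_a\fibQ_b$, $\fibQ_{a-1}\fibQ_b$, $\fibQ_a\fibQ_{b-1}$, $\fibQ_{a-1}\fibQ_{b-1}$, which assemble into $(\fibQ_a+\fibQ_{a-1})(\fibQ_b+\fibQ_{b-1})=\fibP_a\fibP_b$ via Lemma~\ref{Lemma:RelationPandQ}. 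For instance, when $(r,s)=(2m,2n)$ the matrix $B_2$ has three non-standard entries (all equal to $2$, at positions $0,m,m+n$) and the eight-term expansion collapses to $(x-1)\fibQ_{m+n}+\fibP_m\fibP_n$, while $B_1$ has a single non-standard entry (at the $4$-turn) and its two-term expansion is already $\fibQ_{m+n-1}-\fibQ_{m-1}\fibQ_{n-1}$. The other two parity cases are handled analogously.

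The main obstacle is the combinatorial bookkeeping. For each parity of $(r,s)$ one has to determine the sizes of $B_1$ and $B_2$, which of the snake endpoints are black, the exact positions of the non-standard entries within each boundary, and the sizes of the residual blocks after deletion. Once this data is correctly assembled, the determinant expansion and the subsequent algebraic collapse are essentially mechanical.
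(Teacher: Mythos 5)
Your proposal is correct and its skeleton coincides with the paper's: factor $\charpoly{BB^T}=\charpoly{B_1}\charpoly{B_2}$ via Proposition~\ref{Prop:Tridiagonal}, record the diagonal entries of each tridiagonal block (all $3$'s except a $4$ at the turn on the upper boundary, a $2$ at the turn on the lower boundary, and a $2$ at each black path endpoint), expand each characteristic polynomial into signed products of the polynomials $Q_k$, and collapse using $P_k=Q_k+Q_{k-1}$ and $(x-1)Q_k=Q_{k+1}+2Q_k+Q_{k-1}$ from Lemma~\ref{Lemma:RelationPandQ}. The one genuine difference is how the central expansion is obtained. The paper applies Lemma~\ref{Lemma:TridiagFib} (which rests on the Chebyshev identity of Lemma~\ref{Lemma:TridiagCheb}) three times --- to the full block and to the two principal submatrices ending at the non-standard positions --- and then back-substitutes; you instead get the full sum over subsets of non-standard positions in one stroke from multilinearity of the determinant in its rows, via the splitting $x-e_l=(x-3)+(3-e_l)$. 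The two expansions are term-for-term identical (iterating Lemma~\ref{Lemma:TridiagFib} to completion is exactly your subset sum), so the answer is unchanged, but your derivation is more elementary and self-contained in that it bypasses the Chebyshev machinery entirely, at the cost of having to argue separately that deleting an index from the all-$3$ tridiagonal matrix splits it into two $Q$-blocks (which the lemma packages for you). Your worked case checks out: for $\lrev_{2m,2n}$ the lower block is of size $m+n+1$ with $2$'s at positions $0$, $m$, $m+n$, and the eight terms collapse to $(x-1)Q_{m+n}+P_mP_n$, while the upper block has a single $4$ at position $m-1$, giving $Q_{m+n-1}-Q_{m-1}Q_{n-1}$. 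The remaining bookkeeping for the other parities is exactly what the paper encodes in its parameters $p,q,S$, so your ``handled analogously'' is justified.
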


\begin{proof}
\newcommand{\tmphtiles}{\text{width}}
\newcommand{\tmpvtiles}{\text{height}}
\newcommand{\tmpside}{S}
\newcommand{\tmpstart}{p}
\newcommand{\tmpend}{q}
We consider an \lrev-shaped graph $\lrev_{\tmphtiles,\tmpvtiles}$. Notice that $\tmpvtiles$ and $\tmphtiles$ can be equal to $1$ in which case $G$ becomes a horizontal or a vertical snake graph, respectively.

We consider either the upper or the lower boundary of the graph $\lrev_{\tmphtiles,\tmpvtiles}$, say the one corresponding to $B_1$ in the notation of Subsection~\ref{Subsec:BlackTurn}. This boundary component is a path graph which is naturally divided into a first (horizontal) leg and a second (vertical) leg. After reordering, we may assume that the black vertices on the horizontal leg are $0,1,\ldots,r-1$ and that the black vertices on the vertical leg are $r-1,\ldots,r+s-2$ for some $r,s\geq 1$. Here, the vertex $r-1$ is a turn unless $r=1$ or $s=1$ in which case the graph is horizontal or vertical and the vertex $r-1$ is the first or last black vertex in the boundary component.

 We define
\begin{multicols}{2}
\null \vfill
\noindent
\[\tmpside=\begin{cases}+1&\text{if on upper boundary};\\-1&\text{if on lower boundary};\end{cases}\]
\vfill \null
\noindent
\begin{align*}
\tmpstart&=\left(\tmphtiles+\frac{1-\tmpside}{2}\right)\operatorname{mod}2\in\{0,1\};\\
\tmpend&=\left(\tmpvtiles+\frac{1-\tmpside}{2}\right)\operatorname{mod}2\in\{0,1\}.
\end{align*}
\end{multicols}
\noindent We then also have
\begin{multicols}{2}
\noindent\[r=\ceil{\frac{\tmphtiles+\frac{1-\tmpside}{2}}{2}},\]
\noindent\[s=\ceil{\frac{\tmpvtiles+\frac{1-\tmpside}{2}}{2}}.\]
\end{multicols}

We follow the notation of Subsection~\ref{Subsec:BlackTurn} so that $e_0$ and $e_{r+s-2}$ are the number of white vertices in $G$ neighbouring the first and the last black vertex, respectively, and $e_{r-1}$ is the number of white vertices in $G$ neighbouring the turn (if it exists). It should be clear that
\begin{align*}
e_k=3
-\tmpstart\kdelta{k}{0}
+\tmpside\kdelta{k}{r-1}
-\tmpend\kdelta{k}{r+s-2}.
\end{align*}

Lemma~\ref{Lemma:TridiagFib} gives that the characteristic polynomial of $B_1$ is
\begin{align*}
\charpoly{r+s-1}&=\fibQ_{r+s-1}+\sum_{k=0}^{r+s-2}(3-e_k)\charpoly{k} \fibQ_{r+s-2-k}\\
&=\fibQ_{r+s-1}+\tmpstart\charpoly{0}\fibQ_{r+s-2}-\tmpside\charpoly{r-1}\fibQ_{s-1}+\tmpend\charpoly{r+s-2}\fibQ_0.
\end{align*}
Then we may repeat the use of Lemma~\ref{Lemma:TridiagFib} to get the following (provided we take $Q_{-1}=0$ so that the formula works for $r=1$ and/or $s=1$)
\begin{align*}
\charpoly{r-1}&=\fibQ_{r-1}+\tmpstart\charpoly{0}\fibQ_{r-2}=\fibQ_{r-1}+\tmpstart\fibQ_{r-2},\\
\charpoly{r+s-2}&=\fibQ_{r+s-2}+\tmpstart\charpoly{0}\fibQ_{r+s-3}-\tmpside\charpoly{r-1}\fibQ_{s-2}\\
&=\fibQ_{r+s-2}+\tmpstart\fibQ_{r+s-3}-\tmpside(\fibQ_{r-1}+\tmpstart\fibQ_{r-2})\fibQ_{s-2}.
\end{align*}
We then substitute these into the full characteristic polynomial to get
\begin{align*}
\charpoly{r+s-1}
&=\fibQ_{r+s-1}+(\tmpstart+\tmpend)\fibQ_{r+s-2}+\tmpstart\tmpend\fibQ_{r+s-3}
-\tmpside(\fibQ_{r-1}+p\fibQ_{r-2})(\fibQ_{s-1}+q\fibQ_{s-2}).
\end{align*}

The results from Lemma~\ref{Lemma:RelationPandQ} give for $k\geq0$ that $\fibP_k=\fibQ_k+\fibQ_{k-1}$ and
\begin{align*}
(x-1)\fibQ_k&=\fibP_{k+1}+\fibP_k=\fibQ_{k+1}+2\fibQ_k+\fibQ_{k-1},
\end{align*}
thus
\begin{align*}
    \fibQ_{r+s-1}+(\tmpstart+\tmpend)\fibQ_{r+s-2}+\tmpstart\tmpend\fibQ_{r+s-3}
  &=\begin{cases}
    \fibQ_{r+s-1}&\text{if $\tmpstart=\tmpend=0$};\\
    \fibP_{r+s-1}&\text{if $\{\tmpstart,\tmpend\}=\{0,1\}$};\\
    (x-1)\fibQ_{r+s-2}&\text{if $\tmpstart=\tmpend=1$};
    \end{cases}
\\
    \fibQ_{r-1}+\tmpstart\fibQ_{r-2}
  &=\begin{cases}
    \fibQ_{r-1}&\text{if $\tmpstart=0$};\\
    \fibP_{r-1}&\text{if $\tmpstart=1$};
    \end{cases}
\\
    \fibQ_{s-1}+\tmpend\fibQ_{s-2}
  &=\begin{cases}
    \fibQ_{s-1}&\text{if $\tmpend=0$};\\
    \fibP_{s-1}&\text{if $\tmpend=1$}.
    \end{cases}
\end{align*}
Therefore
\[
\charpoly{r+s-1} =
\begin{cases}
\fibQ_{r+s-1}-\tmpside\fibQ_{r-1}\fibQ_{s-1} &\text{if $\tmpstart=\tmpend=0$}; \\
\fibP_{r+s-1}-\tmpside\fibP_{r-1}\fibQ_{s-1} &\text{if $\tmpstart=1$, $\tmpend=0$}; \\
\fibP_{r+s-1}-\tmpside\fibQ_{r-1}\fibP_{s-1} &\text{if $\tmpstart=0$, $\tmpend=1$}; \\
(x-1)\fibQ_{r+s-2}-\tmpside\fibP_{r-1}\fibP_{s-1} &\text{if $\tmpstart=\tmpend=1$}.
\end{cases}
\]
This expression yields one of the two factors on the right hand side of each equation. The other factor is constructed similarly using the other boundary component of $G$.
\end{proof}

\begin{coro}
For every $n$ we have $\fibP_n\fibQ_{n-1} \mid \charpoly{\lrev_{2n,2n}}$ and $\fibP_n\fibQ_n \mid \charpoly{\lrev_{2n+1,2n+1}}$.
\end{coro}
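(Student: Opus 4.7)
The plan is to specialise Theorem~\ref{Theo:L} at $m=n$ and then invoke the divisibility properties from Proposition~\ref{Prop:Divisible} factor by factor. Concretely, setting $m=n$ in Theorem~\ref{Theo:L} yields
\begin{align*}
\charpoly{\lrev_{2n,2n}}
  &=\bigl[\fibQ_{2n-1}-\fibQ_{n-1}^{2}\bigr]\bigl[(x-1)\fibQ_{2n}+\fibP_{n}^{2}\bigr],\\
\charpoly{\lrev_{2n+1,2n+1}}
  &=\bigl[(x-1)\fibQ_{2n}-\fibP_{n}^{2}\bigr]\bigl[\fibQ_{2n+1}+\fibQ_{n}^{2}\bigr],
\end{align*}
so it suffices to locate $\fibP_n$ in one bracket and $\fibQ_{n-1}$ (resp.\ $\fibQ_n$) in the other.

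For the first identity, Proposition~\ref{Prop:Divisible} with $r=n-1$ gives $\fibQ_{n-1}\mid\fibQ_{2n-1}$, hence $\fibQ_{n-1}$ divides both $\fibQ_{2n-1}$ and $\fibQ_{n-1}^{2}$, and therefore their difference. Similarly Proposition~\ref{Prop:Divisible} with $r=n$ gives $\fibP_{n}\mid\fibQ_{2n}$, and clearly $\fibP_{n}\mid\fibP_{n}^{2}$, so $\fibP_{n}\mid(x-1)\fibQ_{2n}+\fibP_{n}^{2}$. Multiplying the two divisibilities produces $\fibP_n\fibQ_{n-1}\mid\charpoly{\lrev_{2n,2n}}$.

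For the second identity, $\fibP_n\mid\fibQ_{2n}$ and $\fibP_n\mid\fibP_n^{2}$ together give $\fibP_n\mid(x-1)\fibQ_{2n}-\fibP_n^{2}$. Applying Proposition~\ref{Prop:Divisible} with $r=n$ to the other factor, $\fibQ_{n}\mid\fibQ_{2n+1}$, so $\fibQ_{n}\mid\fibQ_{2n+1}+\fibQ_{n}^{2}$. Combining yields $\fibP_n\fibQ_n\mid\charpoly{\lrev_{2n+1,2n+1}}$.

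No step here looks difficult; there are no coprimality subtleties to worry about because $\fibP_n$ and $\fibQ_{n-1}$ (or $\fibQ_n$) land in different factors of the product, so their product automatically divides it. The only thing that requires care is making sure the special cases $n=0$ and $n=1$ are compatible with the conventions $\fibP_0=\fibQ_0=1$ and $\fibQ_{-1}=0$ used throughout the paper, but in those cases both statements reduce to trivial divisibilities and the formulas in Theorem~\ref{Theo:L} still apply.
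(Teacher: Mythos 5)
Your proposal is correct and follows exactly the paper's argument: specialise Theorem~\ref{Theo:L} at $m=n$ and apply Proposition~\ref{Prop:Divisible} to each bracket (the paper simply states this in one line, while you spell out the factor-by-factor divisibilities, all of which are applied correctly).
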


\begin{proof}
 If we set $n=m$ in Theorem~\ref{Theo:L}, we get
 \begin{align*}
\charpoly{\lrev_{2n,2n}}&=\left[\fibQ_{2n-1}-\fibQ_{n-1}^2\right]\left[(x-1)\fibQ_{2n}+\fibP_n^2\right],\\
\charpoly{\lrev_{2n+1,2n+1}}&=\left[(x-1)\fibQ_{2n}-\fibP_{n}^2\right]\left[\fibQ_{2n+1}+\fibQ_{n}^2\right].
 \end{align*}
 Proposition~\ref{Prop:Divisible} establishes the claim.
\end{proof}

\subsection{Staircases} This subsection concerns the staircase $S_{m,3}$ with $m\geq 2$ as introduced in Example~\ref{Ex:SnakeGraphs}.

\begin{theo}
\label{Theo:Staircase}
\newcommand{\tmpvarB}{\left(\left(x-3\right)^2\right)}
The characteristic polynomials of $BB^T$ for the staircase snake graphs $S_{m,3}$ for $m\geq 2$ are given by the expressions
\begin{align*}
      \charpoly{S_{2k,3}}&=(x-2)\fibQ_k\tmpvarB\left[(x-2)\fibQ_k\tmpvarB+x\fibQ_{k-1}\tmpvarB\right],\\
    \charpoly{S_{2k+1,3}}&=\left[\fibQ_{k+1}\tmpvarB+(x-1)\fibQ_k\tmpvarB\right]^2.
\end{align*}
\end{theo}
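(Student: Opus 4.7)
The plan is to mirror the strategy used for Theorem~\ref{Theo:L}: identify the upper and lower boundaries of $S_{m,3}$, read off their neighbour sequences $(e_l)$ in the sense of Definition~\ref{Def:NeighbourCounting}, and then compute the resulting tridiagonal characteristic polynomials in closed form.

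In $S_{m,3}$ every turning tile is separated from the next by exactly one non-turning tile, and opposite corners of any square tile share a colour in the bipartition, so all the turns (both $2$-turns and $4$-turns) can be coloured black at once. Proposition~\ref{Prop:Tridiagonal} therefore applies and gives $\charpoly{BB^T}=\charpoly{B_1}\charpoly{B_2}$ with each $B_i$ tridiagonal. A direct case analysis of the staircase geometry yields the neighbour sequences: for $S_{2k,3}$ the upper boundary produces $(2,4,2,4,\ldots,4,2)$ while the lower boundary produces $(3,2,4,2,\ldots,4,2,3)$, both of length $2k+1$; for $S_{2k+1,3}$ the two sequences of length $2k+2$ are reverses of one another (lower $(3,2,4,2,\ldots,4,2)$ and upper $(2,4,2,\ldots,4,2,3)$), so the two tridiagonal blocks are conjugate via the order-reversing permutation and share a characteristic polynomial. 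This last observation explains why $\charpoly{S_{2k+1,3}}$ comes out as a perfect square.

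To extract the stated closed forms I would pass to transfer matrices. The recursion of Lemma~\ref{Lemma:Tridiagonal} can be rewritten as
\[
    \begin{pmatrix}\charpoly{l+1}\\ \charpoly{l}\end{pmatrix}=M(e_l)\begin{pmatrix}\charpoly{l}\\ \charpoly{l-1}\end{pmatrix},\qquad M(e)=\begin{pmatrix} x-e & -1 \\ 1 & 0\end{pmatrix},
\]
and setting $T=M(4)M(2)$ one computes $\det T=1$ and $\operatorname{tr} T=(x-3)^2-3=\fibQ_1((x-3)^2)$. Cayley--Hamilton together with the $\fibQ$-recursion then gives, by induction on $n$,
\[
    T^n=\fibQ_{n-1}((x-3)^2)\,T-\fibQ_{n-2}((x-3)^2)\,I.
\]
This identity is the mechanism that introduces the variable $(x-3)^2$ into the final formulas.

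The closing step is to multiply out the appropriate ordered product of transfer matrices applied to $\binom{1}{0}$ and to simplify the first coordinate using the identities $\fibP_k=\fibQ_k+\fibQ_{k-1}$ and $(x-1)\fibQ_k=\fibP_{k+1}+\fibP_k$ from Lemma~\ref{Lemma:RelationPandQ}. For the upper boundary of $S_{2k,3}$ the relevant product is $M(2)\,T^k$ and collapses to $(x-2)\fibQ_k((x-3)^2)$; for $S_{2k+1,3}$ the product $M(3)\,M(2)\,T^k$ gives $\fibQ_{k+1}((x-3)^2)+(x-1)\fibQ_k((x-3)^2)$. The main obstacle is the lower boundary of $S_{2k,3}$, whose transfer-matrix product $M(3)\,M(2)\,T^{k-1}\,M(3)$ carries an extra $M(3)$ at each end; evaluating it produces terms linear in $w=x-3$ that must be combined with the $\fibP$--$\fibQ$ identities in order to telescope into $(x-2)\fibQ_k((x-3)^2)+x\,\fibQ_{k-1}((x-3)^2)$, and the degenerate case $k=1$ (where $T^0=I$) should be verified separately for consistency.
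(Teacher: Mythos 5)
Your proposal is correct and follows essentially the same route as the paper: the same bipartition/tridiagonal reduction, the same neighbour sequences $(2,4,2,\ldots)$ and $(3,2,4,2,\ldots)$ with the appropriate final entries, and the same mechanism for producing the variable $(x-3)^2$ — your transfer matrix $T=M(4)M(2)$ with $\operatorname{tr}T=(x-3)^2-3$ and the Cayley--Hamilton identity $T^n=Q_{n-1}((x-3)^2)T-Q_{n-2}((x-3)^2)I$ is just a repackaging of the paper's combined two-step recursion $a_{2k+2}=((x-3)^2-3)a_{2k}-a_{2k-2}$, and the remaining simplifications via $P_k=Q_k+Q_{k-1}$ and $(x-1)Q_k=P_{k+1}+P_k$ coincide (I checked that your hardest case, $M(3)M(2)T^{k-1}M(3)$, does telescope to $(x-2)Q_k((x-3)^2)+xQ_{k-1}((x-3)^2)$ as claimed). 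Your reversal-conjugation explanation of why the odd case is a perfect square is a slightly cleaner statement of the paper's observation that $b_{2k+2}$ is also the characteristic polynomial of the reversed diagonal sequence.
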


\begin{proof}
\newcommand{\seqa}{a}
\newcommand{\seqb}{b}
\newcommand{\tmpvar}{u}
\newcommand{\tmpvarB}{\left(\left(x-3\right)^2\right)}
We define two infinite sequences of polynomials, $(\seqa_n)_{n\geq0}$ and $(\seqb_n)_{n\geq0}$, such that $\seqa_n$ is the characteristic polynomial of the $n\times n$ tridiagonal matrix as in Definition~\ref{Def:Tridiagonal} with alternating diagonal entries $(2,4,2,4,\ldots)$, and $\seqb_n$ is the same but with diagonal entries $(3,2,4,2,4\ldots)$ (where the $2,4$ part repeats).

We have $\seqa_0=1,\seqa_1=x-2,\seqb_0=1,\seqb_1=x-3$ and by Lemma~\ref{Lemma:Tridiagonal} we have for $k\geq 1$ the recursions
\begin{align*}
    \seqa_{2k}  &=(x-4)\seqa_{2k-1}-\seqa_{2k-2},&&&\seqb_{2k}  &=(x-2)\seqb_{2k-1}-\seqb_{2k-2},\\
    \seqa_{2k+1}&=(x-2)\seqa_{2k}-\seqa_{2k-1},&&&\seqb_{2k+1}&=(x-4)\seqb_{2k}-\seqb_{2k-1}.
\end{align*}
For $k\geq 1$ we can combine the two recursions for $\seqa_n$ as follows
\begin{align*}
    \seqa_{2k+2}&=(x-4)\seqa_{2k+1}-\seqa_{2k}\\
                &=(x-4)\left[(x-2)\seqa_{2k}-\seqa_{2k-1}\right]-\seqa_{2k}\\
                &=\left[(x-2)(x-4)-1\right]\seqa_{2k}-(x-4)\seqa_{2k-1}\\
                &=\left[(x-3)^2-1^2-1\right]\seqa_{2k}-(\seqa_{2k}+\seqa_{2k-2})\\
                &=\left((x-3)^2-3\right)\seqa_{2k}-\seqa_{2k-2}.
\end{align*}
Then with $\tmpvar(x)=(x-3)^2=x^2-6x+9$ we have $\seqa_{2k+2}=(u(x)-3)\seqa_{2k}-\seqa_{2k-2}$. Since we also have $\seqa_0=1=\fibP_0(\tmpvar(x))$ and $\seqa_2=x^2-6x+7=\tmpvar(x)-2=\fibP_1(\tmpvar(x))$, we can use the recursion in Remark~\ref{Rem:FibProdRec} to see that
\begin{align*}
    \seqa_{2k}=\fibP_k\left(\tmpvar(x)\right)=\fibP_k\tmpvarB.
\end{align*}
We then use the recursion, the above result, and the results from Lemma~\ref{Lemma:RelationPandQ} to get
\begin{align*}
    (x-4)\seqa_{2k+1}&=\seqa_{2k+2}+\seqa_{2k}\\
                     &=\fibP_{k+1}(\tmpvar(x))+\fibP_k(\tmpvar(x))\\
                     &=(\tmpvar(x)-1)\fibQ_k(\tmpvar(x))\\
                     &=(x-4)(x-2)\fibQ_k(\tmpvar(x))
\end{align*}
therefore we get
\begin{align*}
    \seqa_{2k+1}=(x-2)\fibQ_k(\tmpvar(x))=(x-2)\fibQ_k\tmpvarB.
\end{align*}
Next, notice that $\seqb_{2k+2}$ is also the characteristic polynomial of the $(2k+2)\times(2k+2)$ tridiagonal matrix as in Definition~\ref{Def:Tridiagonal} with diagonal entries (2,4,2,4,\ldots,2,3), hence for $k\geq 0$
\begin{align*}
    \seqb_{2k+2}&=(x-3)\seqa_{2k+1}-\seqa_{2k}\\
                &=(x-3)(x-2)\fibQ_k(\tmpvar(x))-\fibP_k(\tmpvar(x)).
\end{align*}
So for $k\geq 1$ we get
\begin{align*}
    \seqb_{2k}&=(x-3)(x-2)\fibQ_{k-1}(\tmpvar(x))-\fibP_{k-1}(\tmpvar(x))\\
                &=\left((x-3)^2+(x-3)\right)\fibQ_{k-1}(\tmpvar(x))-\fibQ_{k-1}(\tmpvar(x))-\fibQ_{k-2}(\tmpvar(x))\\
                &=\left(\tmpvar(x)-3\right)\fibQ_{k-1}(\tmpvar(x))-\fibQ_{k-2}(\tmpvar(x))+(x-1)\fibQ_{k-1}(\tmpvar(x))\\
                &=\fibQ_k(\tmpvar(x))+(x-1)\fibQ_{k-1}(\tmpvar(x)),
\end{align*}
Then, seperately verifying at $k=0$, we have for $k\geq 0$ that
\[ \seqb_{2k}=\fibQ_k(\tmpvar(x))+(x-1)\fibQ_{k-1}(\tmpvar(x)). \]
For the odd index $b_n$ terms we get for $k\geq 1$
\begin{align*}
    (x-2)\seqb_{2k+1}={}& \seqb_{2k+2}+\seqb_{2k}\\
                     ={}& (x-3)(x-2)\fibQ_k(\tmpvar(x))-\fibP_k(\tmpvar(x))\\ &+(x-3)(x-2)\fibQ_{k-1}(\tmpvar(x))-\fibP_{k-1}(\tmpvar(x))\\
                     ={}& (x-3)(x-2)(\fibQ_k(\tmpvar(x))+\fibQ_{k-1}(\tmpvar(x)))-(\tmpvar(x)-1)\fibQ_{k-1}(\tmpvar(x))\\
                     ={}& (x-3)(x-2)(\fibQ_k(\tmpvar(x))+\fibQ_{k-1}(\tmpvar(x)))-(x-4)(x-2)\fibQ_{k-1}(\tmpvar(x))\\
                     ={}& (x-2)[(x-3)\fibQ_k(\tmpvar(x))+\fibQ_{k-1}(\tmpvar(x))].
\end{align*}
Therefore, again separately verifying at $k=0$, we have for $k\geq 0$ that
\[ \seqb_{2k+1}=(x-3)\fibQ_k(\tmpvar(x))+\fibQ_{k-1}(\tmpvar(x)). \]

We now consider a staircase graph of the form $S_{m,3}$. In the notation of Subsection~\ref{Subsec:BlackTurn} we suppose that $B_1$ is the block of $BB^T$ parametrised by the upper boundary and denote by $\charpoly{l}^+=\charpoly{l}^+(x)$ the characteristic polynomials of its submatrices as in Definition~\ref{Def:PrincChar}. We denote by $\charpoly{l}^-$ the corresponding characteristic polynomials of submatrices of $B_2$. There are $m+1$ black vertices on each of the upper and lower boundaries, so $B_1$ and $B_2$ are both $(m+1)\times(m+1)$ matrices. In fact $B_1$ and $B_2$ are both tridiagonal matrices of the form given in Definition~\ref{Def:Tridiagonal}.

All diagonal entries of $B_1$ except for the last follow the alternating pattern $(2,4,2,4,\ldots)$, and for $B_2$ the pattern is $(3,2,4,2,4,\ldots)$ (where the $2,4$ part repeats), so for $0\leq n\leq m$ we have the following
\begin{alignat*}{2}
    \charpoly{n}^+&{}={}&&\seqa_n=\begin{cases}
    \fibP_k\tmpvarB     &\text{if $n=2k$ even};\\
    (x-2)\fibQ_k\tmpvarB&\text{if $n=2k+1$ odd};
    \end{cases}\\
    \charpoly{n}^-&{}={}&&\seqb_n=\begin{cases}
    \fibQ_k\tmpvarB+(x-1)\fibQ_{k-1}\tmpvarB&\text{if $n=2k$ even};\\
    (x-3)\fibQ_k\tmpvarB+\fibQ_{k-1}\tmpvarB&\text{if $n=2k+1$ odd}.
    \end{cases}
\end{alignat*}
The last diagonal entry of $B_1$ is $2$ if $m$ is even (hence the staircase $S_{m,3}$ ends with a vertical segment) and $3$ if $m$ is odd ($S_{m,3}$ ends with a horizontal segment), and similarly the last diagonal entry of $B_2$ is $3$ if $m$ is even and $2$ if $m$ is odd. We use this and the recursion from Remark~\ref{Rem:CharPolyRecursion} (or Lemma~\ref{Lemma:Tridiagonal}) to get
\begin{align*}
    \charpoly{m+1}^+&=\begin{cases}
    (x-2)\seqa_{2k}-\seqa_{2k-1}=\seqa_{2k+1}&\text{if $m=2k$ even};\\
    (x-3)\seqa_{2k+1}-\seqa_{2k}=\seqb_{2k+2}&\text{if $m=2k+1$ odd};
    \end{cases}\\
    \charpoly{m+1}^-&=\begin{cases}
    (x-3)\seqb_{2k}-\seqb_{2k-1}=\seqb_{2k+1}+\seqb_{2k}&\text{if $m=2k$ even};\\
    (x-2)\seqb_{2k+1}-\seqb_{2k}=\seqb_{2k+2}&\text{if $m=2k+1$ odd};
    \end{cases}
\end{align*}
thus, using the expressions we found for the $\seqa_n$ and $\seqb_n$ terms, we get
\begin{align*}
    \charpoly{m+1}^+&=\begin{cases}
    (x-2)\fibQ_k\tmpvarB&\text{if $m=2k$ even};\\
    \fibQ_{k+1}\tmpvarB+(x-1)\fibQ_k\tmpvarB&\text{if $m=2k+1$ odd};
    \end{cases}\\
    \charpoly{m+1}^-&=\begin{cases}
    (x-2)\fibQ_k\tmpvarB+x\fibQ_{k-1}\tmpvarB&\text{if $m=2k$ even};\\
    \fibQ_{k+1}\tmpvarB+(x-1)\fibQ_k\tmpvarB&\text{if $m=2k+1$ odd}.
    \end{cases}
\end{align*}
Finally, the full characteristic polynomial is given by the product $\charpoly{m+1}^+\charpoly{m+1}^-$.
\end{proof}

\section{Characteristic polynomials via continued fractions}

\subsection{Convergents of continued fractions}

In this section we use a continued fraction expansion as in Equation~(\ref{Eqn:NegConFrac}) to construct the characteristic polynomial of $BB^T$.

Suppose that $(a_1,\ldots,a_{k})$ is a sequence of positive integers or a sequence of non-constant polynomials in the ring $\mathbb{Q}[x]$.

\begin{defn}[Convergents]
\label{Def:Convergents}
We define two sequences $(p_0,\ldots,p_k)$ and $(q_0,\ldots,q_k)$ by the recursions $p_0=1$, $p_1=a_1$ and $p_{l}=a_lp_{l-1}+p_{l-2}$ for $l\in [2,k]$ and $q_0=0$, $q_1=1$ and $q_{l}=a_lq_{l-1}+q_{l-2}$ for $l\in [2,k]$. The quotients $p_l/q_l$ are called the \emph{convergents} of the continued fraction $[a_1,\ldots,a_k]$.
\end{defn}

Note that the elements $p_l$ and $q_l$ in Definition~\ref{Def:Convergents} are non-zero thanks to our assumptions on the sequence $(a_l)_{1\leq l\leq k}$. The following proposition is well-known.

\begin{prop} Let $l\in [1,k]$. The convergents satisfy
\begin{align*}
[a_1,a_2,\ldots,a_l]=a_1+\cfrac{1}{a_2 +\cfrac{1}{a_3 +\cfrac{1}{\ddots + \cfrac{1}{a_l}}}}=\frac{p_l}{q_l}.
\end{align*}
\end{prop}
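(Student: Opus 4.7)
My plan is to prove by induction on $l$ the slightly strengthened statement that for any non-zero quantity $z$ (real number or indeterminate),
\[
[a_1, \ldots, a_{l-1}, z] \;=\; \frac{z\, p_{l-1} + p_{l-2}}{z\, q_{l-1} + q_{l-2}},
\]
where I extend the recursion backwards by setting $p_{-1} = 0$ and $q_{-1} = 1$, consistently with the initial data $p_0 = 1, p_1 = a_1, q_0 = 0, q_1 = 1$ of Definition~\ref{Def:Convergents}. Specialising $z = a_l$ and invoking the defining recursions $p_l = a_l p_{l-1} + p_{l-2}$ and $q_l = a_l q_{l-1} + q_{l-2}$ then yields the proposition at once. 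The base case $l = 1$ is immediate: the right-hand side collapses to $(z \cdot 1 + 0)/(z \cdot 0 + 1) = z$, which equals $[z]$.

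For the inductive step I would use the elementary collapsing identity
\[
[a_1, \ldots, a_{l-1}, z] \;=\; [a_1, \ldots, a_{l-2},\, a_{l-1} + \tfrac{1}{z}],
\]
which is visible directly from the nested form of the continued fraction in Equation~(\ref{Eqn:CFrac}). Applying the induction hypothesis (at length $l-1$) with modified last entry $a_{l-1} + 1/z$, then clearing the fraction by multiplying numerator and denominator by $z$, one obtains $(z(a_{l-1} p_{l-2} + p_{l-3}) + p_{l-2})/(z(a_{l-1} q_{l-2} + q_{l-3}) + q_{l-2})$. Recognising $a_{l-1} p_{l-2} + p_{l-3} = p_{l-1}$ and similarly for $q_{l-1}$, the expression simplifies precisely to the target $(zp_{l-1} + p_{l-2})/(zq_{l-1} + q_{l-2})$.

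The single conceptual point worth flagging is the strengthening of the induction hypothesis itself. A naive induction trying to prove $[a_1, \ldots, a_l] = p_l/q_l$ directly does not propagate, because the inductive step forces one to evaluate a shorter continued fraction at the \emph{non-integer} (or non-polynomial) last entry $a_{l-1} + 1/z$. Allowing the last entry to be arbitrary from the outset removes this obstruction, after which the argument is routine manipulation of the recursions in Definition~\ref{Def:Convergents}.
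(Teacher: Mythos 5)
Your argument is correct: the strengthened induction hypothesis with an arbitrary nonzero last entry $z$, combined with the collapsing identity $[a_1,\ldots,a_{l-1},z]=[a_1,\ldots,a_{l-2},a_{l-1}+\tfrac{1}{z}]$ and the backward extension $p_{-1}=0$, $q_{-1}=1$, is the classical proof, and you rightly flag that the naive induction fails because the shortened fraction must be evaluated at the non-integer entry $a_{l-1}+\tfrac{1}{z}$. The paper itself offers no proof, dismissing the proposition as well-known, so there is nothing to compare against; your writeup supplies exactly the standard argument that is being implicitly invoked.
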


Here, the quotient $p_l/q_l$ is a positive rational number or an element in the field of fractions $\mathbb{Q}(x)$. Moreover, using the recursion it is easy to show that $p_lq_{l-1}-q_lp_{l-1}=\pm 1$ for every $l$. This equation implies that the numerator $p_l$ and the denominator $q_l$ are coprime.

\subsection{Characteristic polynomials as numerators of continued fractions}
\label{Subsec:ContinuedFracWhiteTurns}

Suppose that $G$ is a snake graph coloured such that all turns are black. We equip the graph with the Kasteleyn weighting from Proposition~\ref{Prop:Tridiagonal}. So the matrix $BB^T$ is a $2\times 2$ block diagonal matrix whose diagonal blocks $B_1$ and $B_2$ are indexed by the upper and lower boundary. We pick one of the boundaries, say the one corresponding to $B_1$, and label its black vertices by $1,2,\ldots,k$ in order. (Notice that in contrast to previous sections we begin the indexing with $1$ for technical reasons.)

\begin{defn}[Linear constituents]
For an integer $l\in [1,k]$ we put $c_l(x)=x-e_l\in \mathbb{Z}[x]$ where, as before, $e_l$ is the number of neighbours of the black vertex $l$ in $G$.
\end{defn}

\begin{theo}
\label{Theo:Numerator}
Let $G$ be a snake graph whose turns are all black. For any Kasteleyn weighting the characteristic polynomial of the weighted adjacency matrix $A$ can be written as $\charpoly{A}(t)=\charpoly{B_1}(t^2)\charpoly{B_2}(t^2)$
where $\charpoly{B_1}(x)=p(x)$ is the numerator of the continued fraction
\begin{align*}
    [[c_1(x),\ldots,c_k(x)]]=c_1(x)-\cfrac{1}{c_2(x) -\cfrac{1}{c_3(x) -\cfrac{1}{\ddots - \cfrac{1}{c_k(x)}}}}=\frac{p(x)}{q(x)}
\end{align*}
and the polynomial $\charpoly{B_2}(x)$ is constructed similarly using the second boundary graph.
\end{theo}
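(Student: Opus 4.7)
The plan is to reduce the statement to a straightforward comparison of two recursions.

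First I would handle the easy structural part. Proposition~\ref{Prop:CharPoly} gives $\charpoly{A}(t)=\charpoly{BB^T}(t^2)$, and Proposition~\ref{Prop:Tridiagonal} identifies $BB^T$ as a block-diagonal matrix $\operatorname{diag}(B_1,B_2)$ whose blocks are tridiagonal with $1$'s immediately off the diagonal; hence $\charpoly{BB^T}(x)=\charpoly{B_1}(x)\charpoly{B_2}(x)$. This factorisation gives the product form $\charpoly{A}(t)=\charpoly{B_1}(t^2)\charpoly{B_2}(t^2)$ at once, and reduces the theorem to the single identity $\charpoly{B_1}(x)=p(x)$ (the other factor being handled symmetrically).

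Second, I would exhibit both $\charpoly{B_1}(x)$ and $p(x)$ as the $k$-th term of a three-term recurrence with the same initial values. On the matrix side, denote by $\chi_l(x)$ the characteristic polynomial of the principal $l\times l$ submatrix of $B_1$; Lemma~\ref{Lemma:Tridiagonal} yields
\begin{align*}
    \chi_0=1,\qquad \chi_1=x-e_1=c_1(x),\qquad \chi_l=c_l(x)\chi_{l-1}-\chi_{l-2}\quad (l\geq 2).
\end{align*}
On the continued fraction side, I would define numerators $p_l$ and denominators $q_l$ for the truncations $[[c_1,\ldots,c_l]]$ by analogy with Definition~\ref{Def:Convergents}, but adapted to the negative continued fraction (replacing the $+$ in the recursion by a $-$). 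A direct calculation of the first few values shows
\begin{align*}
p_0=1,\qquad p_1=c_1,\qquad p_l=c_l p_{l-1}-p_{l-2}\quad (l\geq 2),
\end{align*}
and the same for $q_l$ with initial conditions $q_0=0$, $q_1=1$. The usual induction, clearing denominators in $[[c_1,\ldots,c_l]]=c_1-1/[[c_2,\ldots,c_l]]$ (or equivalently appending $c_l$ at the bottom), confirms both that these are indeed the numerator and denominator of the convergent and that they obey the stated recursion. Comparing, $\chi_l$ and $p_l$ satisfy identical recursions with identical initial data, so $\chi_l=p_l$ for every $l$; in particular $\charpoly{B_1}(x)=\chi_k=p_k=p(x)$.

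Finally I would remark that the argument is completely symmetric for the lower boundary: the same derivation with the vertices of the lower boundary (and the corresponding neighbour counts feeding into the block $B_2$) yields $\charpoly{B_2}(x)$ as the numerator of the analogous continued fraction, and invariance under gauge transformations (the proposition right after Definition~\ref{Def:Weighting} on unitary equivalence of weighted adjacency matrices for different Kasteleyn weightings) ensures that the statement holds for any Kasteleyn weighting, not merely the one from Definition~\ref{Def:ConstructionRealWeighting}. There is no genuine obstacle here; the only subtle point is verifying the sign convention in the recursion for $p_l$ so that it matches the minus sign appearing in the tridiagonal expansion, which I would do by expanding $[[c_1,c_2]]$ and $[[c_1,c_2,c_3]]$ explicitly before invoking induction.
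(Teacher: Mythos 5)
Your proposal is correct and follows essentially the same route as the paper: reduce to the block factorisation via Proposition~\ref{Prop:Tridiagonal} and Proposition~\ref{Prop:CharPoly}, then match the three-term recursion $\charpoly{l}=c_l\charpoly{l-1}-\charpoly{l-2}$ of the principal minors (Lemma~\ref{Lemma:Tridiagonal}) against the recursion for the numerators of the convergents. The one genuine difference is in the sign handling: the paper rewrites $[[c_1,\ldots,c_l]]$ as the ordinary continued fraction $[c_1,-c_2,\ldots,(-1)^{l+1}c_l]$ and applies Definition~\ref{Def:Convergents} verbatim, which forces it to carry a $4$-periodic sign $\varepsilon(l)$ through the induction, whereas you derive the minus-recursion $p_l=c_lp_{l-1}-p_{l-2}$ for the negative continued fraction directly (by the standard clearing-of-denominators induction you sketch, which does go through). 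Your version is slightly cleaner --- it gives $\charpoly{B_1}=p_k$ on the nose with the natural normalisation of the numerator, rather than up to the sign $\varepsilon(k)$ --- and your explicit appeal to the gauge-equivalence proposition to cover arbitrary Kasteleyn weightings is a point the paper leaves implicit.
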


\begin{proof}
Let $l\in [1,k]$. We consider the characteristic polynomial $\charpoly{l}=\charpoly{l}(x)$ of the principal submatrix of $B_1$ on rows $[1,l]$, see Defintion~\ref{Def:PrincChar}. We claim that $\charpoly{l}(x)=\varepsilon(l)p_l(x)$ where $p_l$ is the numerator of the continued fraction
\begin{align*}
[[c_1(x),c_2(x),\ldots,c_l(x) ]]=[c_1(x),-c_2(x),c_3(x),\ldots,(-1)^{l+1}c_l(x)]
\end{align*}
and $\varepsilon(l)\in\{\pm 1\}$ is a sign. More precisely, we claim that $\varepsilon\colon\mathbb{N}\to \{\pm 1\}$ is $4$-periodic with initial values $(\varepsilon(0),\varepsilon(1),\varepsilon(2),\varepsilon(3))=(1,1,-1,-1)$; in other words, $\varepsilon(l)=(-1)^{\sigma(l)}$ with $\sigma(l)=\tfrac{1}{4}[2l+(-1)^l-1]$ for all $l\geq 0$. Note that $\varepsilon(l)=-\varepsilon(l-2)$ and $\varepsilon(l)=(-1)^{l+1}\varepsilon(l-1)$ for all $l\geq 2$.

We prove the claim by induction. The claim is true for $l=1$ since $\charpoly{1}(x)=x-e_1$ and for $l=2$ since $\charpoly{2}(x)=x^2-(e_1+e_2)x+e_1e_2-1$. Remark~\ref{Rem:CharPolyRecursion} asserts that $\charpoly{l}(x)=c_l(x)\charpoly{l-1}(x)-\charpoly{l-2}(x)$ for $l \in [2,k]$. The right hand side of the last expression becomes $\varepsilon(l)[(-1)^{l+1}c_l(x)p_{l-1}(x)+p_{l-2}]$ if we apply the induction hypothesis. This is equal to $\varepsilon(l)p_l(x)$ by Definition~\ref{Def:Convergents}.

An application of Proposition~\ref{Prop:CharPoly} finishes the proof of the statement.
\end{proof}

\begin{exam} The lower boundary of the graph in Figure~\ref{Fig:NeighbourCount} has a continued fraction
    \begin{align*}
x-3-\cfrac{1}{x-2 -\cfrac{1}{x-4-\cfrac{1}{x-3}}}=\frac{x^4-12x^3+50x^2-84x+46}{x^3-9x^2+24x-19};
\end{align*}
the numerator generates
\begin{align*}
    \charpoly{B_1}(t)=\left(x^4-12x^3+50x^2-84x+46\right)\biggr\rvert_{x=t^2}=t^8-12t^6+50t^4-84t^2+46.
\end{align*}
\end{exam}

\begin{figure}

\begin{center}
\begin{tikzpicture}[every path/.style={dotted}]

\pgfdeclarelayer{background}
\pgfdeclarelayer{foreground}
\pgfsetlayers{background,main,foreground}

\definecolor{c1}{RGB}{150,150,150}

\newcommand{\x}{1.5cm} %%% side length of a tile
\newcommand{\R}{0.5pt}  %%% size of bullet
\newcommand{\s}{0.5cm}  %%% shift of labels
\newcommand{\sos}{0.2cm}  %%% shift of labels (signs)

\draw (0,0) node[fill,circle,minimum size=\R,color=c1]{};
\node at (0.5*\x,0*\x+\sos) {$\bm{+}$};
\draw (0,\x) node[fill,circle,minimum size=\R]{};

\draw (\x,0) node[fill,circle,minimum size=\R]{};
\node at (1*\x,0*\x-\s) {$3$};
\draw (\x,\x) node[fill,circle,minimum size=\R,color=c1]{};
\node at (\x+\sos,0.5*\x) {$\bm{+}$};

\draw (2*\x,0) node[fill,circle,minimum size=\R,color=c1]{};
\node at (2*\x+\sos,0.5*\x) {$\bm{-}$};
\draw (2*\x,\x) node[fill,circle,minimum size=\R]{};

\draw (3*\x,0) node[fill,circle,minimum size=\R]{};
\node at (3*\x,0*\x-\s) {$2$};
\draw (3*\x,\x) node[fill,circle,minimum size=\R,color=c1]{};
\node at (2.5*\x,\x+\sos) {$\bm{-}$};

\draw (2*\x,2*\x) node[fill,circle,minimum size=\R,color=c1]{};
\draw (3*\x,2*\x) node[fill,circle,minimum size=\R]{};
\node at (3*\x+\s,2*\x-\s) {$4$};
\node at (2.5*\x,2*\x+\sos) {{$\bm{+}$}};

\draw (2*\x,3*\x) node[fill,circle,minimum size=\R]{};
\draw (3*\x,3*\x) node[fill,circle,minimum size=\R,color=c1]{};
\node at (3*\x+\sos,2.5*\x) {{$\bm{+}$}};

\draw (4*\x,2*\x) node[fill,circle,minimum size=\R,color=c1]{};
\draw (4*\x,3*\x) node[fill,circle,minimum size=\R]{};
\node at (4*\x+\sos,2.5*\x) {{$\bm{-}$}};

\draw (5*\x,2*\x) node[fill,circle,minimum size=\R]{};
\node at (5*\x,2*\x-\s) {$3$};
\draw (5*\x,3*\x) node[fill,circle,minimum size=\R,color=c1]{};
\node at (5*\x+\sos,2.5*\x) {{$\bm{+}$}};

\draw (6*\x,2*\x) node[fill,circle,minimum size=\R,color=c1]{};
\draw (6*\x,3*\x) node[fill,circle,minimum size=\R]{};
\node at (6*\x+\sos,2.5*\x) {{$\bm{-}$}};

\begin{pgfonlayer}{background}
\path[draw] (0,0) to (3*\x,0);
\path[draw] (0,\x) to (3*\x,\x);

\path[draw] (2*\x,0) to (2*\x,3*\x);
\path[draw] (3*\x,0) to (3*\x,3*\x);

\path[draw] (2*\x,2*\x) to (6*\x,2*\x);
\path[draw] (2*\x,3*\x) to (6*\x,3*\x);

\path[draw] (4*\x,2*\x) to (4*\x,3*\x);
\path[draw] (5*\x,2*\x) to (5*\x,3*\x);
\path[draw] (6*\x,2*\x) to (6*\x,3*\x);

\path[draw] (0,0) to (0,\x);
\path[draw] (\x,0) to (\x,\x);

\end{pgfonlayer}

\end{tikzpicture}
\end{center}
     \caption{The neighbour count on the lower boundary}
     \label{Fig:NeighbourCount}
\end{figure}
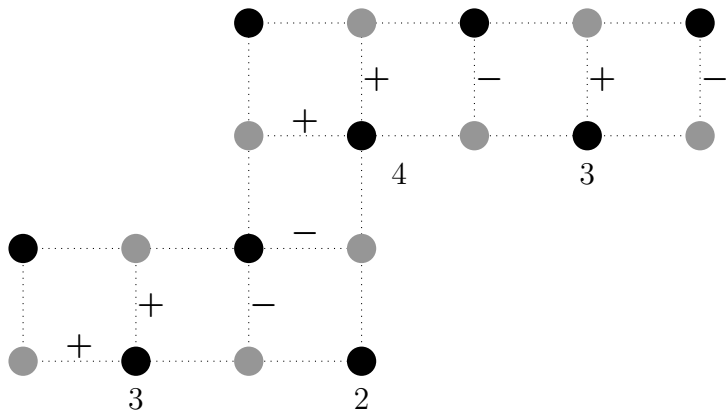

\subsection{Generalised snake graphs via rotations}
In Subsection~\ref{Subsec:ContinuedFracWhiteTurns} we have given a method to construct the characteristic polynomial when all turns of the given snake graph are black. In this subsection we present a way to transform a snake graph with white turns into a generalised snake graph and explain how to apply the results from Subsection~\ref{Subsec:ContinuedFracWhiteTurns} to generalised snake graphs.

\begin{defn}[Rotation at squares]
\label{Def:Rotation}
Let $G=(V,E)$ be a bipartite graph whose vertices can be partitioned in $V=V_0\sqcup V_1\sqcup V_2$ such that the following conditions hold.
\begin{itemize}
    \item[(i)] The full subgraph $T$ of $G$ on vertices $V_0$ is a square with vertices $A,B,C,D$.
    \item[(ii)] There are vertices $S_1,\ldots,S_n\in V_2$ and $R_1,\ldots,R_m\in V_2$ such that the edges $(D,S_i)$ for $i\in [1,n]$ and $(C,R_j)$ for $j\in [1,m]$ are the only edges in $G$ connecting a vertex in $V_2$ to a vertex in $V\backslash V_2$.
    \item[(iii)] A condition analogous to \textnormal{(ii)} holds for the full subgraph $G_1$ of $G$ on vertices $V_1$, see Figure~\ref{Fig:Rotation}.
\end{itemize}
The \emph{rotation} of $G$ at $T$ is the graph $r_T(G)=G'=(V,E')$ with the same vertices and
\begin{align*}
    E'=\left(E\backslash \left\lbrace (D,S_i),(C,R_j) \colon i\in [1,n], j\in [1,m]\right\rbrace\right) \cup \left\lbrace (B,S_i),(A,R_j)\colon i\in[1,n], j\in [1,m]\right\rbrace.
\end{align*}
\end{defn}

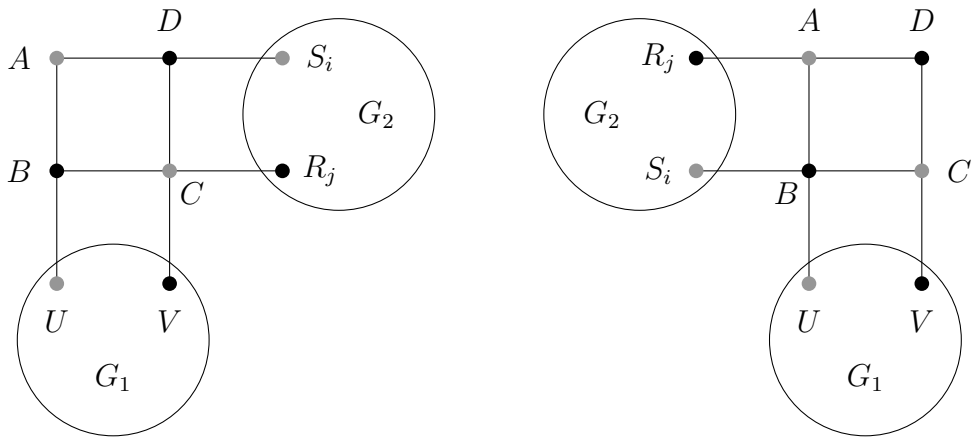
\begin{figure}
\begin{center}

\begin{tikzpicture}%[every path/.style={dotted}]

\pgfdeclarelayer{background}
\pgfdeclarelayer{foreground}
\pgfsetlayers{background,main,foreground}

\definecolor{c1}{RGB}{150,150,150}

\newcommand{\x}{1.5cm} %%% side length of a tile
\newcommand{\R}{0.5}  %%% size of bullet
\newcommand{\s}{0.5cm}  %%% shift of labels
\newcommand{\sos}{0.5cm}  %%% shift of labels (signs)
\newcommand{\rs}{5}  %%% number of rays
\newcommand{\sh}{10cm}  %%% distance between figures

\draw (0,0) node[fill,circle,scale=\R,color=c1]{};
\draw (0.6*\sos,-0.6*\sos) node{$C$};
\draw (0,\x) node[fill,circle,scale=\R]{};
\draw (0,\x+\sos) node{$D$};
\draw (-\x,0) node[fill,circle,scale=\R]{};
\draw (-\sos-\x,0) node{$B$};
\draw (-\x,\x) node[fill,circle,scale=\R,color=c1]{};
\draw (-\sos-\x,\x) node{$A$};

\draw (0,-\x-\sos) node{$V$};
\draw (-\x,-\x-\sos) node{$U$};

\draw (\x,0) node[fill,circle,scale=\R]{};
\draw (\x+\sos,0) node{$R_j$};
\draw (\x,\x) node[fill,circle,scale=\R,color=c1]{};
\draw (\x+\sos,\x) node{$S_i$};

\draw (0,-\x) node[fill,circle,scale=\R]{};
\draw (-\x,-\x) node[fill,circle,scale=\R,color=c1]{};

\draw (1.5*\x+\sos,0.5*\x) node{$G_2$};
\draw (-0.5*\x,-1.5*\x-\sos) node{$G_1$};

\draw (\sh,0) node[fill,circle,scale=\R,color=c1]{};
\draw (\sh+\sos,0) node{$C$};
\draw (\sh,\x) node[fill,circle,scale=\R]{};
\draw (\sh,\x+\sos) node{$D$};
\draw (\sh-\x,0) node[fill,circle,scale=\R]{};
\draw (\sh-\x-0.6*\sos,-0.6*\sos) node{$B$};
\draw (\sh-\x,\x) node[fill,circle,scale=\R,color=c1]{};
\draw (\sh-\x,\x+\sos) node {$A$};

\draw (\sh-2*\x,\x) node[fill,circle,scale=\R]{};
\draw (\sh-2*\x-\sos,0) node{$S_i$};
\draw (\sh-2*\x,0) node[fill,circle,scale=\R,color=c1]{};
\draw (\sh-2*\x-\sos,\x) node{$R_j$};

\draw (\sh,-\x) node[fill,circle,scale=\R]{};
\draw (\sh-\x,-\x) node[fill,circle,scale=\R,color=c1]{};

\draw (\sh-2.5*\x-\sos,0.5*\x) node{$G_2$};
\draw (\sh-0.5*\x,-1.5*\x-\sos) node{$G_1$};

\begin{pgfonlayer}{background}
\path[draw] (0,0) to (-\x,0);
\path[draw] (-\x,0) to (-\x,\x);
\path[draw] (-\x,\x) to (0,\x);
\path[draw] (0,\x) to (0,0);

%\path[draw] (\x,0) to (\x,\x);
\path[draw] (\x,\x) to (0,\x);
\path[draw] (\x,0) to (0,0);

%\path[draw] (0,-\x) to (-\x,-\x);
\path[draw] (-\x,-\x) to (-\x,0);
\path[draw] (0,-\x) to (0,0);

\draw (1.5*\x,0.5*\x) circle (0.85*\x);
\draw (-0.5*\x,-1.5*\x) circle (0.85*\x);

\path[draw] (\sh,0) to (\sh-\x,0);
\path[draw] (\sh-\x,0) to (\sh-\x,\x);
\path[draw] (\sh-\x,\x) to (\sh,\x);
\path[draw] (\sh,\x) to (\sh,0);

\path[draw] (\sh-\x,0) to (\sh-2*\x,0);
\path[draw] (\sh-2*\x,\x) to (\sh-\x,\x);
%\path[draw] (\sh-2*\x,\x) to (\sh-2*\x,0);

%\path[draw] (\sh,-\x) to (\sh-\x,-\x);
\path[draw] (\sh-\x,-\x) to (\sh-\x,0);
\path[draw] (\sh,-\x) to (\sh,0);

\draw (\sh,-\x-\sos) node{$V$};
\draw (\sh-\x,-\x-\sos) node{$U$};

\draw (\sh-2.5*\x,0.5*\x) circle (0.85*\x);
\draw (\sh-0.5*\x,-1.5*\x) circle (0.85*\x);

\end{pgfonlayer}

\end{tikzpicture}
\end{center}
     \caption{A rotation}
     \label{Fig:Rotation}
\end{figure}

\begin{lemma}
\label{Lemma:Rotate}
In the setup of Definition~\ref{Def:Rotation} the graphs $G=(V,E)$ and $r_T(G)=(V,E')$ have the same number of perfect matchings.
\end{lemma}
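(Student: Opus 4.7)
The plan is to construct an explicit bijection $\Phi \colon \PM(G) \to \PM(r_T(G))$ by exploiting the fact that $G$ and $r_T(G)$ share the same vertex set and differ only in how the vertices $R_j, S_i \in V_2$ attach to the tile $T$. Given $P \in \PM(G)$, I decompose it as a disjoint union $P = P_1 \sqcup P_2^{\mathrm{int}} \sqcup P_2^{\mathrm{ext}} \sqcup P_0$, where $P_1$ collects the edges of $P$ either inside $V_1$ or joining $V_1$ to $V_0$, $P_2^{\mathrm{int}}$ the edges of $P$ inside $V_2$, $P_2^{\mathrm{ext}}$ the edges of the form $(C, R_j)$ or $(D, S_i)$, and $P_0 = P \cap E(T)$. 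Writing $X_1 \subseteq \{B, C\}$ and $X_2 \subseteq \{C, D\}$ for the subsets of $V_0$ covered by $P_1$ and $P_2^{\mathrm{ext}}$ respectively, the matching axiom forces $X_1 \cap X_2 = \emptyset$, and since $A$ has no neighbours outside $T$ the subset $V_0 \setminus (X_1 \cup X_2)$ must be perfectly matched by $P_0$ inside the $4$-cycle $T$ with edges $AB, BC, CD, DA$.

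A direct enumeration over the possible pairs $(X_1, X_2)$ reveals that exactly five configurations are feasible, namely
\begin{align*}
(X_1, X_2) \in \bigl\{(\emptyset, \emptyset),\ (\emptyset, \{C, D\}),\ (\{B\}, \{C\}),\ (\{C\}, \{D\}),\ (\{B, C\}, \emptyset)\bigr\},
\end{align*}
with $P_0$ uniquely determined in the last four cases (as $\{AB\}$, $\{AD\}$, $\{AB\}$ and $\{AD\}$ respectively) and equal to either $\{AB, CD\}$ or $\{BC, DA\}$ in the first. Performing the same enumeration for $r_T(G)$, where now $V_2$ attaches to $\{A, B\}$ via the edges $(A, R_j)$ and $(B, S_i)$, yields the analogous list of five feasible configurations obtained from the above by substituting $X_2' \subseteq \{A, B\}$ for $X_2 \subseteq \{C, D\}$ under the relabelling $C \leftrightarrow A$, $D \leftrightarrow B$.

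The bijection $\Phi(P)$ is then defined by keeping $P_1 \cup P_2^{\mathrm{int}}$ unchanged, exchanging each edge $(C, R_j)$ of $P_2^{\mathrm{ext}}$ with $(A, R_j)$ and each $(D, S_i)$ with $(B, S_i)$, and inserting as $P_0'$ the unique $T$-matching covering $V_0 \setminus (X_1 \cup X_2')$; in the degenerate configuration $(\emptyset, \emptyset)$ I simply set $P_0' = P_0$. A symmetric construction yields the inverse $\Phi^{-1}$, whence $\Phi$ is a bijection and the lemma follows. The principal obstacle is the enumeration of feasible configurations and the verification that the relabelling $C \leftrightarrow A$, $D \leftrightarrow B$ puts them into one-to-one correspondence; this is entirely routine given the $4$-cycle structure of $T$ and the fact that the isolated vertex $A$ is forced into $P_0$.
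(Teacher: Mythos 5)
Your proof is correct and rests on the same underlying idea as the paper's: an explicit bijection $\Phi$ that fixes everything away from the tile $T$, transports each external edge $(C,R_j)$ to $(A,R_j)$ and $(D,S_i)$ to $(B,S_i)$, and repairs the matching inside the $4$-cycle. The difference is purely in how the case analysis is organised. The paper branches on how the vertex $A$ is matched (to $D$; to $B$ with $(C,D)$ present; to $B$ with $(C,D)$ absent) and invokes a parity argument on $V_2$ to conclude that the edges $(D,S_i)$ and $(C,R_j)$ occur either both or neither --- an argument that implicitly uses that the two colour classes of $V_2$ have equal size, which holds in the intended application but is not stated in Definition~\ref{Def:Rotation}. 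Your enumeration of the five feasible pairs $(X_1,X_2)$ sidesteps that parity argument entirely: the two ``unbalanced'' configurations $(\{B\},\{C\})$ and $(\{C\},\{D\})$, which the paper rules out by parity, are simply carried along by your relabelling $C\leftrightarrow A$, $D\leftrightarrow B$ and mapped to the corresponding configurations of $r_T(G)$ (empty or not, symmetrically on both sides). So your version is marginally more robust, at the cost of a longer enumeration; both are complete proofs.
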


\begin{proof}
Let $\operatorname{Match}(G)$ denote the set of perfect matchings of $G$.
We define a map 
\begin{align*}
\Phi\colon \operatorname{Match}(G)\to \operatorname{Match}(G')
\end{align*}
as follows. Consider a perfect matching $m\in \PM(G)$. Note that the vertex $A$ can be matched in two ways.
\begin{itemize}
    \item[(i)] The first way is $(A,D)\in m$. In this case $(D,S_i)\notin m$ for all $i\in [1,n]$. Moreover, $(C,R_j)\notin m$ for any $j\in [1,m]$ for parity reasons. Hence, $m\subseteq E'$ is a perfect matching of $G'$ and we put $\Phi(m)=m$.
    \item[(ii)] The second way is $(A,B)\in m$.
    \begin{itemize}
        \item[(iia)] If $(C,D)\in m$, then $m\subseteq E'$ and we put $\Phi(m)=m$.
        \item[(iib)] If $(C,D)\notin m$, then $(D,S_i)\in m$ for some $i\in [1,n]$ because $D$ must be matched. Furthermore, $(C,R_j)\in m$ for some $j\in [1,m]$ for parity reasons. We define
        \begin{align*}
            \Phi(m)=\left(m \backslash \left\lbrace\right (A,B),(D,S_i),(C,R_j)\rbrace\right) \cup \left\lbrace (C,D),(A,R_j),(B,S_i)\right\rbrace.
        \end{align*}
    \end{itemize}
\end{itemize}
It is easy to see that $\Phi$ is bijective.
\end{proof}

\begin{defn}[Generalised snake graphs]
A \emph{generalised snake graph} is a graph that can be obtained from a snake graph by applying a sequence of rotations.
\end{defn}

\begin{rem}
Every generalised snake graph is a concatenation of a sequence of square tiles such that each tile is glued to the previous tile along an edge. Moreover, every generalised snake graph is planar.
\end{rem}

Let us generalise the notions from Subsection~\ref{Subsec:SpecialVertices}.

\begin{defn}[Turns]
 Let $G$ be a generalised snake graph. A vertex of $G$ is called a \emph{turn} if one of the following conditions holds.
\begin{itemize}
    \item[(i)] It is a vertex of at least $3$ tiles.
    \item[(ii)] It is diagonally opposite (across a tile) a turn as defined in (i), is a vertex of exactly one tile, and is neither a vertex of the first tile nor a vertex of the last tile.
\end{itemize}
A turn is called an \emph{$n$-turn} if it is adjacent to exactly $n$ vertices (so that $n\in\{2,4\}$ if G is a snake graph).
A tile of G with exactly two turns as vertices is called a \emph{turning tile}.
\end{defn}
Note that $2$ is the maximum number of turns a tile can have, that a turn defined by condition (ii) is always a $2$-turn, and that a turn defined by condition (i) is never a $2$-turn.
No turn is ever a $3$-turn, and for $n>3$ any $n$-turn is diagonally opposite exactly $n-3$ $2$-turns.

The proof of the following proposition is straightforward.

\begin{prop}
\label{Prop:Blackening}
Given a snake graph $G$, then there exists a sequence of rotations that transforms $G$ into a generalised snake graph whose turns are all black.
\end{prop}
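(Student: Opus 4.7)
I will prove the claim by induction on the number of turning tiles of $G$ whose two turns are white. The base case is zero white turning tiles, where there is nothing to do. For the inductive step I pick a white turning tile $T$ and apply a single rotation at $T$ in the sense of Definition~\ref{Def:Rotation}; I then verify that this rotation turns both turns of $T$ black while leaving the colour of every other turn of the graph unchanged, so the count of white turning tiles strictly decreases.

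\textbf{Setup and rotation.} Fix a bipartition of the vertices of $G$ into black and white. Since the $2$-turn and the $4$-turn of any turning tile are diagonally opposite vertices of that tile, they share the same colour, so one may speak of the colour of a turning tile. Let $T$ be a white turning tile, and label its vertices so that its boundary cycle reads $A$--$B$--$C$--$D$--$A$ with $C$ the $4$-turn and $A$ the $2$-turn (both white), and $B,D$ the remaining (black) vertices, diagonally opposite in $T$. Because $T$ is a turning tile it is neither the first nor the last tile of $G$; the subsnake $V_1$ of tiles preceding $T$ is attached to $T$ along the edge $B$--$C$, and the subsnake $V_2$ of tiles following $T$ is attached to $T$ along the edge $C$--$D$. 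This matches the hypotheses of Definition~\ref{Def:Rotation}; I apply the rotation and obtain a graph $G'$ in which $V_2$ is attached to $T$ along the edge $A$--$B$ instead.

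\textbf{Verification and main obstacle.} In $G'$ the vertex $B$ lies in three tiles---namely $T$, the last tile of $V_1$, and the new $4$-cycle $\{A,B,R_1,S_1\}$ that replaces the old first tile of $V_2$---so $B$ is a type-(i) turn, and its diagonal $D$ (now contained only in $T$) is a type-(ii) turn. Both $B$ and $D$ are black, so $T$ remains a turning tile but with black turns. The rotation alters only the edges joining $\{C,D\}$ to $V_2$, so the bipartite colouring is preserved, $V_1$ is untouched, and every tile of $G$ besides $T$ and the old first tile of $V_2$ appears unchanged in $G'$; consequently every turn located in $V_1$ or further inside $V_2$ survives at the same vertex with the same colour. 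A short case analysis at the seam between $T$ and $V_2$ shows that the only turn whose position can migrate is a type-(i) turn originally sitting at $D$, and this happens precisely when $D$ belonged to three tiles of $G$; in this case the turn migrates to $B$, and since $B$ and $D$ are diagonally opposite in the square $T$ they share a colour, so the migration is colour-preserving. The delicate step is exactly this seam analysis: one must verify that the new $4$-cycle cannot produce any new white turn, which follows from the observation that its edges $A$--$B$ and $R_1$--$S_1$ are opposite sides of the cycle, so any new type-(i) turn it creates must sit at the black corner $B$. Iterating the rotation finitely many times yields a generalised snake graph all of whose turns are black.
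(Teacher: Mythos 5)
The paper offers no argument for Proposition~\ref{Prop:Blackening} beyond the remark that the proof is straightforward, so there is nothing to compare line by line; your proposal supplies exactly the intended argument (rotate at each white turning tile, using the fact that a rotation in the sense of Definition~\ref{Def:Rotation} swaps the turn pair $\{A,C\}$ of the turning tile for the opposite-coloured diagonal pair $\{B,D\}$), and it is correct in substance. Two small points of precision are worth tightening. First, your sentence ``every tile of $G$ besides $T$ and the old first tile of $V_2$ appears unchanged in $G'$'' is not quite right in the case you yourself later isolate, namely when $D$ lies in three tiles of $G$: there the second tile of $V_2$ also changes (its vertex $D$ is replaced by $B$), and $B$ then lies in four tiles of $G'$ rather than three; none of this harms the conclusion, since the only vertices whose tile-count changes are $A,B$ (each $+1$) and $C,D$ (each $-1$), so the only possible new type-(i) turn is at the black vertex $B$, and every displaced type-(ii) turn ($D$, $R_1$, and the vertex opposite $B$ in a possible fourth tile) is diagonal to $B$ or $D$ across a square and hence black. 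Second, the induction should formally be carried out on generalised snake graphs (or, equivalently, one should note that since adjacent turning tiles of a snake graph carry oppositely coloured turns, the local configuration at every other white turning tile is untouched by the rotation at $T$, so the rotations may simply be performed one after another along the snake); as written, the inductive hypothesis is stated only for snake graphs while the object produced after one rotation is a generalised snake graph. With these two clarifications the argument is complete.
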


\newcommand{\black}{\operatorname{black}}

\begin{exam}
Figure~\ref{Fig:Equivalent} shows the staircase $S_{6,2}$ and a generalised snake graph whose turns are all black that can be obtained from $S_{6,2}$ by a sequence of rotations.
\end{exam}

The generalised snake graph from Proposition~\ref{Prop:Blackening} is unique and we denote it by $\black(G)$.

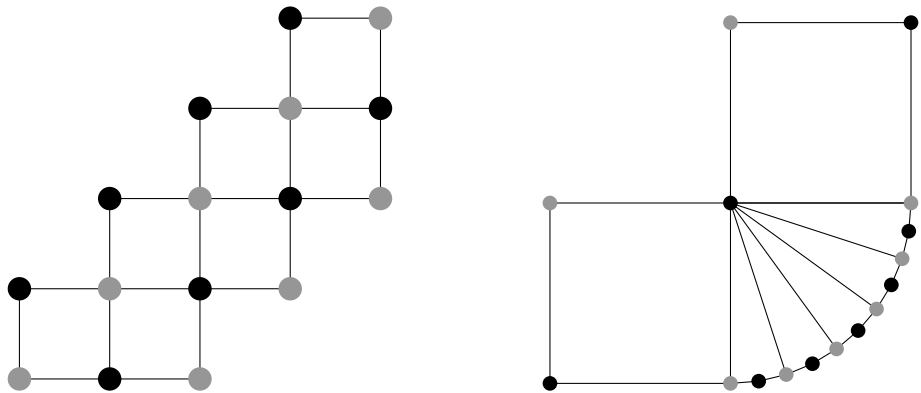
\begin{figure}

\begin{center}
\begin{tikzpicture}%[every path/.style={dotted}]

\pgfdeclarelayer{background}
\pgfdeclarelayer{foreground}
\pgfsetlayers{background,main,foreground}

\definecolor{c1}{RGB}{150,150,150}

\newcommand{\x}{1.2cm} %%% side length of a tile
\newcommand{\R}{0.8}  %%% size of bullet
\newcommand{\s}{0.5cm}  %%% shift of labels
\newcommand{\sos}{0.5cm}  %%% shift of labels (signs)

\draw (0,0) node[fill,circle,scale=\R,color=c1]{};

%\draw (-\sos,\x) node{$A$};
%\draw (-\sos,0) node{$B$};

\draw (0,\x) node[fill,circle,scale=\R]{};

\draw (\x,\x) node[fill,circle,color=c1,scale=\R]{};
\draw (\x,0) node[fill,circle,scale=\R]{};

\draw (2*\x,0) node[fill,circle,scale=\R,color=c1]{};
\draw (2*\x,\x) node[fill,circle,scale=\R]{};

\draw (2*\x,2*\x) node[fill,circle,scale=\R,color=c1]{};
\draw (\x,2*\x) node[fill,circle,scale=\R]{};

\draw (3*\x,\x) node[fill,circle,scale=\R,color=c1]{};
\draw (3*\x,2*\x) node[fill,circle,scale=\R]{};

\draw (3*\x,3*\x) node[fill,circle,scale=\R,color=c1]{};
\draw (2*\x,3*\x) node[fill,circle,scale=\R]{};

\draw (4*\x,2*\x) node[fill,circle,scale=\R,color=c1]{};
\draw (4*\x,3*\x) node[fill,circle,scale=\R]{};

\draw (4*\x,4*\x) node[fill,circle,scale=\R,color=c1]{};
\draw (3*\x,4*\x) node[fill,circle,scale=\R]{};

%\draw (4*\x,4*\x+\sos) node{$D$};
%\draw (3*\x,4*\x+\sos) node{$C$};

%\draw (5*\x,3*\x) node[fill,circle,scale=\R,color=c1]{};
%\draw (5*\x,4*\x) node[fill,circle,scale=\R]{};

\begin{pgfonlayer}{background}
\path[draw] (0,0) to (2*\x,0);
\path[draw] (0,\x) to (3*\x,\x);
\path[draw] (0,0) to (0,\x);

\path[draw] (\x,0) to (\x,2*\x);
\path[draw] (2*\x,0) to (2*\x,3*\x);

\path[draw] (\x,2*\x) to (4*\x,2*\x);
\path[draw] (3*\x,\x) to (3*\x,4*\x);

\path[draw] (2*\x,3*\x) to (4*\x,3*\x);
\path[draw] (3*\x,4*\x) to (4*\x,4*\x);
\path[draw] (4*\x,2*\x) to (4*\x,4*\x);

%\path[draw] (5*\x,3*\x) to (5*\x,4*\x);
\end{pgfonlayer}

\end{tikzpicture}\hspace{2cm}\begin{tikzpicture}%[every path/.style={dotted}]

\pgfdeclarelayer{background}
\pgfdeclarelayer{foreground}
\pgfsetlayers{background,main,foreground}

\definecolor{c1}{RGB}{150,150,150}

\newcommand{\x}{2.4cm} %%% side length of a tile
\newcommand{\R}{0.5}  %%% size of bullet
\newcommand{\s}{0.5cm}  %%% shift of labels
\newcommand{\sos}{0.5cm}  %%% shift of labels (signs)
\newcommand{\rs}{5}  %%% number of rays

\draw (0,0) node[fill,circle,scale=\R]{};
\draw (-\x,0) node[fill,circle,scale=\R,color=c1]{};
%\draw (-\x-\sos,0) node{$A$};

\draw (0,-\x) node[fill,circle,scale=\R,color=c1]{};
\draw (-\x,-\x) node[fill,circle,scale=\R]{};
%\draw (-\x-\sos,-\x) node{$B$};

\draw (\x,0) node[fill,circle,scale=\R,color=c1]{};
\draw (0,\x) node[fill,circle,scale=\R,color=c1]{};
\draw (\x,\x) node[fill,circle,scale=\R]{};
%\draw (0,\x+\sos) node{$C$};
%\draw (\x,\x+\sos) node{$D$};

\foreach \number in {2,...,\rs}{
\draw (270-90/\rs+90*\number/\rs:\x) node[fill,circle,scale=\R,color=c1]{};
}

\foreach \number in {1,...,\rs}{
\draw (270-45/\rs+90*\number/\rs:\x) node[fill,circle,scale=\R]{};
}

\begin{pgfonlayer}{background}
\path[draw] (0,0) to (-\x,0);
\path[draw] (-\x,0) to (-\x,-\x);
\path[draw] (-\x,-\x) to (0,-\x);
\path[draw] (0,-\x) to (0,0);

\path[draw] (0,0) to (\x,0);
\path[draw] (\x,0) to (\x,\x);
\path[draw] (\x,\x) to (0,\x);
\path[draw] (0,\x) to (0,0);

\foreach \number in {1,...,\rs}{
\path[draw] (270+90*\number/\rs:\x) to (0,0);
\path[draw] (270-45/\rs+90*\number/\rs:\x) to (270-90/\rs+90*\number/\rs:\x);
\path[draw] (270-45/\rs+90*\number/\rs:\x) to (270+90*\number/\rs:\x);
}

\end{pgfonlayer}

\end{tikzpicture}
\end{center}
     \caption{Transformation into a graph with black turns}
     \label{Fig:Equivalent}
\end{figure}

\subsection{Continued fractions for graphs with white turns}
To generalise Theorem~\ref{Theo:Numerator} we refine the notion of upper and lower boundary. Suppose that $G$ is a generalised snake graph.

\newcommand{\BE}{\operatorname{B}}

Suppose that $G$ has at least two tiles. As in Subsection~\ref{Subsec:SpecialVertices}, we will refer to the edges that bound the infinite face as boundary edges. The \emph{start edge} of $G$ is the edge of the first square tile connecting the $2$ vertices that do not belong to any other tile. The \emph{end edge} is defined similarly for the last tile. The subgraph of $G$ formed by the boundary edges is a circular graph $C(G)$.

If we further remove the start and the end edge from the circular graph $C(G)$, then we obtain a disjoint union of two path graphs which we will call the \emph{upper} and \emph{lower boundary} of $G$ as in Definition~\ref{Def:UpperLowerBoundary}. Here, the path graph containing the lower left vertex of the first tile is the upper boundary if the next tile is above the first tile, otherwise it is the lower boundary. Vertices in the upper or lower boundary are called \emph{upper} and \emph{lower} vertices, respectively.

Musiker, Schiffler and Williams~\cite{MS10,MSW11} construct snake graphs from triangulated surfaces. In this article we consider triangulated polygons since every snake graph can be realised as the snake graph of a diagonal inside a triangulated polygon. Here, a triangulation of a convex polygon is a maximal set of diagonals that do not intersect except possibly at endpoints.

We fix a convex polygon $\widetilde{\Sigma}$ and a triangulation $\widetilde{T}$ of $\widetilde{\Sigma}$. Let $\gamma$ be a diagonal in $\widetilde{\Sigma}$ that is not part of the triangulation. Let $\Sigma\subseteq \widetilde{\Sigma}$ be the union of the triangles of $\widetilde{T}$ that intersect $\gamma$. Then $\Sigma$ is again a polygon, and it is naturally triangulated by a set $T\subseteq \widetilde{T}$. We label the vertices of $\Sigma$ in clockwise order by $0,1,\ldots,m-1$ for some $m\geq 3$, and we will let $k\geq 1$ denote the natural number such that $\gamma$ connects vertex $0$ to vertex $k+1$. Let $G=G_{\gamma}$ be the snake graph associated with $\gamma$.

Let $(a_1,\ldots,a_n)$ be the sign sequence of $G$. Without loss of generality we may assume that $n$ is even after applying the procedure in Remark~\ref{Rem:Unique} if necessary. (This procedure corresponds to reversing the choice of the internal edge in the last tile.) We use Morier-Genoud and Ovsienko's formula \cite[Equation (1.2)]{MO19} to define a sequence
\begin{align*}
(e_1,\ldots,e_{k'})=
\big(a_1+1,\underbrace{2,\ldots,2}_{a_2-1},\,
a_3+2,\underbrace{2,\ldots,2}_{a_4-1},\ldots,
a_{n-1}+2,\underbrace{2,\ldots,2}_{a_{n}-1}\big).
\end{align*}

\begin{lemma}[Morier-Genoud--Ovsienko]
\label{Lemma:TriangleCount}
We have $k=k'$. Moreover, for any $l\in [1,k]$ the number $e_l$ is equal to the number of triangles in $T$ incident to vertex $l$.
\end{lemma}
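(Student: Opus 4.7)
The plan is to view this statement as a geometric reinterpretation of Morier-Genoud and Ovsienko's algebraic identity~(\ref{Eqn:TwoContinuedFractions}) in the language of the triangulated polygon $(\Sigma,T)$. The continued fraction $[a_1,\ldots,a_n]$ already encodes the sign sequence of $G_\gamma$ by \c{C}anak\c{c}\i--Schiffler's result, while $(e_1,\ldots,e_{k'})$ is defined so that $[[e_1,\ldots,e_{k'}]]=[a_1,\ldots,a_n]$ via~(\ref{Eqn:TwoContinuedFractions}). Both parts of the claim then follow once each $e_l$ is identified geometrically with a triangle count at a vertex of $\Sigma$ on the upper side of $\gamma$.

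First, I would set up a dictionary between the sign sequence of $G_\gamma$ and the triangulation $T$. Enumerating the triangles of $T$ as $\Delta_1,\ldots,\Delta_{m-2}$ in the order in which $\gamma$ traverses them, each internal $\Delta_i$ has exactly one vertex on one side of $\gamma$ and two on the other. The sign assigned to the internal edge between $\Delta_i$ and $\Delta_{i+1}$ records the side on which the new singleton vertex of $\Delta_{i+1}$ sits. A maximal run of equal signs of length $a_j$ in the sign sequence therefore corresponds to a maximal \emph{fan} of consecutive triangles sharing a common vertex on the opposite side of $\gamma$; the apex of such a fan is an upper or lower vertex of $\Sigma$, depending on the parity of $j$, and the number of triangles in the fan equals $a_j$ or $a_j+1$ according to whether the fan is bounded by a pivot diagonal on $0$, $1$, or $2$ sides.

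Second, I would translate~(\ref{Eqn:TwoContinuedFractions}) term by term. The odd-indexed entries $c_1=a_1+1$ and $c_{2j+1}=a_{2j+1}+2$ correspond to \emph{pivot vertices} on the upper side of $\gamma$ where two consecutive fans meet: such a vertex is the apex of one triangle from each neighbouring fan plus the $a_{2j+1}-1$ triangles shared by it along its own fan, together giving $a_{2j+1}+2$ (or $a_1+1$ at the initial endpoint, because one of the two adjacent fans degenerates into the corner triangle $\Delta_1$ at vertex $0$). Each inserted copy of $2$ corresponds to a vertex of $\Sigma$ strictly in the interior of a single fan; such a vertex is incident to exactly the two triangles sharing the diagonal through it. This matches $e_l$ with the number of triangles of $T$ incident to the $l$-th upper vertex, and the total count $k'=\sum_{j\text{ even}}a_j$ coincides with the number of upper vertices in $\Sigma$ strictly between $0$ and $k+1$, giving $k=k'$.

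The main obstacle will be the bookkeeping at the two endpoints of $\gamma$ and the resulting $+1$ versus $+2$ correction in the formula for $(e_l)$. The corner triangles $\Delta_1$ and $\Delta_{m-2}$ are exceptional because they each have an endpoint of $\gamma$ as a vertex, and the two auxiliary internal edges of $G_\gamma$ (the lower side of the first tile and the upper side of the last tile) must be incorporated into the sign sequence with signs consistent with the convention in Remark~\ref{Rem:Unique} so that~(\ref{Eqn:TwoContinuedFractions}) applies without off-by-one errors. Once these conventions are fixed, the lemma reduces to the Morier-Genoud--Ovsienko identity~\cite{MO19}, with the geometric interpretation of the right-hand side providing the claimed triangle counts.
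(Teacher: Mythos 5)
Your plan is sound in outline, but it takes a genuinely different route from the paper: the paper does not prove this lemma at all, it simply cites Morier-Genoud--Ovsienko~\cite[Proposition~2.1]{MO19} and reduces the remaining work to checking that their combinatorial description of the sequence $(a_i)$ agrees with the \c{C}anak\c{c}{\i}--Schiffler sign sequence (via \cite[Proof of Theorem~5.3 and Figure~18]{CS13}). What you propose is essentially a self-contained reconstruction of the geometric content of that cited proposition: the dictionary between maximal sign runs and fans of triangles with a common apex on the opposite side of $\gamma$, followed by a term-by-term reading of Equation~(\ref{Eqn:TwoContinuedFractions}) in which the inserted $2$'s are vertices interior to a fan (incident to exactly two triangles) and the entries $a_{2j+1}+2$ are the fan apices. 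That is exactly the right picture, and your first paragraph is, in effect, the compatibility check the paper flags as the only thing left to verify; so your approach buys independence from the reference at the cost of carrying out the endpoint bookkeeping that you explicitly defer. Two points to tighten if you execute the plan: (1) the arithmetic in your pivot-vertex count does not close as written --- ``one triangle from each neighbouring fan plus $a_{2j+1}-1$ triangles from its own fan'' gives $a_{2j+1}+1$, not $a_{2j+1}+2$, so the decomposition of the triangles at an apex needs to be restated carefully (the apex of a fan of $a$ triangles also lies in the one triangle on either side where the opposite boundary advances, and the fan itself contributes $a$ triangles, not $a-1$); and (2) the identity $k=k'=\sum_{j \text{ even}}a_j$ should be derived from the same fan count rather than asserted, since it is exactly the statement that the even-indexed runs record the advances along the boundary arc from $0$ to $k+1$ containing $1,\ldots,k$. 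Neither issue is fatal, but both sit precisely in the ``bookkeeping at the endpoints'' that your sketch leaves open, which is where all the actual content of the lemma lives.
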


The proof of the previous statement can be found in \cite[Proposition 2.1]{MO19}, after we have convinced ourselves that the combinatorial description of the sequence $(a_i)_{1\leq i\leq n}$, see \cite[Section~1.2, (1)]{MO19}, agrees with the sign sequence, compare \c{C}anak\c{c}{\i}--Schiffler~\cite[Proof of Theorem~5.3 and Figure~18]{CS13}.

There is a similar statement about vertices on the boundary of $\Sigma$ if we go from $0$ to $k+1$ in counterclockwise direction. These numbers are related to the sign sequence after reversing the choice of the internal edge in the first tile.

Note that, by construction of the snake graph $G$ and the associated generalised snake graph $\black(G)$, the upper boundary of $\black(G)$ contains exactly $k$ black vertices. For $l\in [1,k]$ the number of triangles in $T$ incident to $l$ is equal to the number of (white) vertices in $G$ adjacent to $l$. Lemma~\ref{Lemma:TriangleCount} implies the following corollary.

\begin{coro}
\label{Coro:BlackVertices}
Let $l\in [1,k]$. Then $e_l$ is equal to the number of neighbours of the black vertex $l$ in $\black(G)$.
\end{coro}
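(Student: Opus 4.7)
The strategy is to chain Lemma~\ref{Lemma:TriangleCount} with a degree-preservation property of the rotations introduced in Definition~\ref{Def:Rotation}, so that every step links a geometric quantity to a combinatorial one without requiring a recomputation from scratch.

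First I would invoke Lemma~\ref{Lemma:TriangleCount} to rewrite $e_l$ as the number of triangles of the triangulation $T$ incident to the polygon vertex $l$. Next, by the Musiker--Schiffler--Williams snake graph construction there is a natural bijection between triangles of $T$ incident to $l$ and white vertices of $G$ adjacent to the corresponding upper-boundary black vertex, which we also denote by $l$; this is precisely the content of the remark made immediately before the corollary statement, and it yields the intermediate equality $e_l = \deg_G(l)$, where $\deg_G(l)$ counts neighbours of $l$ in $G$.

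The remaining step is to establish $\deg_{\black(G)}(l) = \deg_G(l)$. Here the key observation is the following degree-preservation property of the rotation operation. Inspecting Definition~\ref{Def:Rotation}, the rotation $r_T$ alters only the degrees of the four corners $A, B, C, D$ of the pivoting tile: every $S_i$ merely swaps its edge to $D$ for an edge to $B$, every $R_j$ swaps $(C,R_j)$ for $(A,R_j)$, and each vertex of $G_1$ is left untouched. Consequently any vertex that never appears as a corner of a pivoting tile throughout the sequence of rotations used in Proposition~\ref{Prop:Blackening} keeps its original neighbour count.

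The main technical obstacle is to verify that each upper-boundary black vertex $l$ actually avoids being a corner of a pivot tile during the blackening procedure. My plan is to argue this by induction on the length of the rotation sequence of Proposition~\ref{Prop:Blackening}: the rotations are performed exactly at the tiles containing a white turn, and a case analysis of the local configuration near such a turn should show that the vertices $l$ on the upper boundary always occupy an $S_i$-, $R_j$-, or $G_1$-position relative to the pivot rather than a corner position. Combining this with the preceding paragraph gives $e_l = \deg_G(l) = \deg_{\black(G)}(l)$, as claimed.
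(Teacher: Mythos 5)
There is a genuine gap, located precisely in the step you flag as the main technical obstacle: the degree-preservation claim $\deg_G(l)=\deg_{\black(G)}(l)$ is false, and so is the intermediate equality $e_l=\deg_G(l)$. A rotation $r_T$ transfers $n$ edges from $D$ to $B$ and $m$ edges from $C$ to $A$, so it changes the degrees of all four corners of the pivot tile; since every vertex of a (generalised) snake graph lies on the upper or lower boundary, the two black corners of a pivot tile are themselves boundary black vertices, so they cannot all be kept in ``$S_i$-, $R_j$-, or $G_1$-position''. Indeed, changing these degrees is the entire purpose of the blackening procedure: it collects at a single black vertex of $\black(G)$ all the edges coming from the triangles of $T$ incident to the polygon vertex $l$, whereas in $G$ itself a polygon vertex incident to many triangles labels \emph{several} distinct snake-graph vertices (the snake graph unfolds the fan), none of which has degree $e_l$. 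Figure~\ref{Fig:Equivalent} is a concrete counterexample: for $G=S_{6,2}$ the sign sequence is a single run of eight plus signs, so $[8]=[7,1]$ gives $k=1$ and $e_1=8$; the black vertex $1$ of $\black(G)$ is the centre of the fan and has degree $8$, while every vertex of $G$ has degree at most $4$. Thus both links of your chain $e_l=\deg_G(l)=\deg_{\black(G)}(l)$ break.

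The intended argument does not pass through $G$ at all. One combines Lemma~\ref{Lemma:TriangleCount} (which identifies $e_l$ with the number of triangles of $T$ incident to the polygon vertex $l$) with the observation that, by construction of $\black(G)$, these triangles are in bijection with the white neighbours of the black vertex $l$ in $\black(G)$. If you want to make that observation rigorous by induction on the rotation sequence, the correct invariant is not that degrees are preserved, but that after each rotation the edges incident to the black corner $A$ (respectively $B$) are exactly those contributed by the triangles of $T$ at the corresponding polygon vertex that have been traversed so far; you would then track how Definition~\ref{Def:Rotation} moves the $R_j$-edges from $C$ onto $A$ and check that this reassembles the fan of triangles at each polygon vertex.
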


The results of Subsections~\ref{Subsec:Kasteleyn}, \ref{Subsec:WeightedAdjacency} and \ref{Subsec:BipartiteStructure} generalise to generalised snake graphs. Especially, any generalised snake graph admits a Kasteleyn weighting and all Kasteleyn weightings are equivalent. With the terms just redefined the proof of Proposition~\ref{Prop:Tridiagonal} can be easily modified to apply to generalised snake graphs $G$ where all turns are black, as can various results that follow from this Proposition. In particular, if $B$ is the weighted bipartite adjacency matrix of $G$, then $BB^T$ is a $2\times 2$ block diagonal matrices whose diagonal blocks are triadiagonal matrices. Hence, we can compute the characteristic polynomial of $BB^T$ with the recursions from Section~\ref{Sec:Recursions} or as the numerator of a continued fraction. As before, the data required for this is given by the numbers of neighbours of the black vertices on one of the two boundary path graphs.

\subsection{Perfect matchings}

\begin{theo}
\label{Theo:Matchings}
Suppose that $H$ is a generalised snake graph whose turns are all black, equipped with a Kasteleyn weighting $w$. Let $B$ be the weighted bipartite adjacency matrix of $H$ and let $B_1$ and $B_2$ be the diagonal blocks of the matrix $BB^T$ indexed by the black vertices on the upper and lower boundary. Then $\lvert\operatorname{det}(B_1)\rvert=\lvert\operatorname{det}(B_2)\rvert$ is equal to the number of perfect matchings of $H$.
\end{theo}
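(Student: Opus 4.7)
The plan is to combine Kasteleyn's theorem (which controls only the product $\operatorname{det}(B_1)\operatorname{det}(B_2)$) with the continued-fraction description of $\charpoly{B_1}$ from Theorem~\ref{Theo:Numerator} (which pins $\operatorname{det}(B_1)$ down exactly); once $\lvert\operatorname{det}(B_1)\rvert=\PM(H)$ is proved, the equality $\lvert\operatorname{det}(B_2)\rvert=\PM(H)$ is forced by division.

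First I would record the product relation. The generalisation of Proposition~\ref{Prop:Tridiagonal} to generalised snake graphs with black turns ensures that $BB^T$ is block diagonal with diagonal blocks $B_1$ and $B_2$, so $\operatorname{det}(BB^T)=\operatorname{det}(B_1)\operatorname{det}(B_2)$. Writing $H=\black(G)$ for an underlying snake graph $G$, Kasteleyn's Theorem~\ref{Thm:Kasteleyn} applied to $H$ together with Lemma~\ref{Lemma:Rotate} (which guarantees $\PM(H)=\PM(G)$) then yield
\[
\operatorname{det}(B_1)\operatorname{det}(B_2) \;=\; \operatorname{det}(B)^2 \;=\; \PM(H)^2 \;=\; \PM(G)^2.
\]

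Next I would compute $\lvert\operatorname{det}(B_1)\rvert$ directly. Since $\charpoly{B_1}(0)=(-1)^{m_1}\operatorname{det}(B_1)$ it suffices to compute $\lvert\charpoly{B_1}(0)\rvert$. By (the generalisation of) Theorem~\ref{Theo:Numerator}, $\charpoly{B_1}(x)$ is the numerator of $[[x-e_1,\ldots,x-e_{m_1}]]$; specialising to $x=0$ produces the numerator of $[[-e_1,\ldots,-e_{m_1}]]$. A short induction on the convergent recursion $p_l=c_lp_{l-1}-p_{l-2}$ shows that this numerator differs from the positive numerator of $[[e_1,\ldots,e_{m_1}]]$ only by the global sign $(-1)^{m_1}$. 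Now Corollary~\ref{Coro:BlackVertices} identifies the sequence $(e_1,\ldots,e_{m_1})$ with the Morier-Genoud--Ovsienko triangle-count sequence $(c_1,\ldots,c_k)$ of $G$; Equation~\eqref{Eqn:TwoContinuedFractions} gives $[[e_1,\ldots,e_{m_1}]]=[a_1,\ldots,a_n]$; and \c{C}anak\c{c}{\i}--Schiffler identifies the numerator of the right hand side with $\PM(G)=\PM(H)$. Combining these yields $\lvert\operatorname{det}(B_1)\rvert=\PM(H)$, and dividing the displayed product relation by $\PM(H)$ gives $\lvert\operatorname{det}(B_2)\rvert=\PM(H)$.

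The main obstacle I anticipate is the bookkeeping: aligning the positive expansion $[a_1,\ldots,a_n]$ of \c{C}anak\c{c}{\i}--Schiffler with the negative expansion $[[c_1,\ldots,c_k]]$ of Theorem~\ref{Theo:Numerator} via Equation~\eqref{Eqn:TwoContinuedFractions}, tracking the sign that appears when substituting $-e_l$ for $e_l$ in the convergent recursion, and checking that the generalised snake graph framework of Subsection~\ref{Subsec:BipartiteStructure} preserves both the Kasteleyn structure and the continued-fraction interpretation for $H=\black(G)$. Once these translations are in place, the argument closes very quickly.
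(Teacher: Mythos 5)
Your proposal is correct and follows essentially the same route as the paper: Theorem~\ref{Theo:Numerator} evaluated at $x=0$ gives $\lvert\operatorname{det}(B_1)\rvert$ as the numerator of $[[e_1,\ldots,e_k]]$, which is identified with $\PM(G)=\PM(H)$ via Corollary~\ref{Coro:BlackVertices}, Equation~(\ref{Eqn:TwoContinuedFractions}), the \c{C}anak\c{c}{\i}--Schiffler formula and Lemma~\ref{Lemma:Rotate}, and then the Kasteleyn product relation $\lvert\operatorname{det}(B_1)\operatorname{det}(B_2)\rvert=\PM(H)^2$ forces $\lvert\operatorname{det}(B_2)\rvert=\PM(H)$. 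Your explicit sign bookkeeping for the substitution $x=0$ (the factor $(-1)^{m_1}$ from the convergent recursion) is a correct elaboration of the paper's terser ``$\charpoly{B_1}(0)=\pm p$''.
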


\begin{proof}
There is a snake graph $G$ such that $H$ is obtained from $G$ by a sequence of rotations. We consider the sign sequence $(a_1,\ldots,a_n)$ of $G$ and, without loss of generality, we may assume that $n$ is even after applying the procedure in Remark~\ref{Rem:Unique} if necessary. The numerator in \c{C}anak\c{c}{\i}--Schiffler's Formula
\begin{align*}
    [a_1,a_2,\ldots,a_n]=\frac{p}{q},
\end{align*}
see Equation~(\ref{Eqn:Expansion}), is equal to the number of perfect matchings of $G$, that is, $\PM(G)=p$. Hirzebruch and Morier-Genoud--Ovsienko's Formula, see Equation~(\ref{Eqn:TwoContinuedFractions}), implies
\begin{align*}
[[e_1,e_2,\ldots,e_k]]=\frac{p}{q}.
\end{align*}
By Corollary~\ref{Coro:BlackVertices}, $k$ is equal to the number of black vertices in one of the boundary components of $H=\black(G)$, say the one corresponding to $B_1$, and for any $l\in [1,k]$ the number $e_l$ is equal to the number of (white) vertices adjacent to the (black) vertex $l$ in $G$. The substitution $x=0$ allows us to conclude from Theorem~\ref{Theo:Numerator} that $\charpoly{B_1}(0)=\pm p$; this implies $\lvert \operatorname{det}(B_1)\rvert=\lvert \charpoly{B_1}(0)\rvert =p$. 

Notice that $p=\PM(G)=\PM(H)$ as a consequence of Lemma~\ref{Lemma:Rotate}. If $A$ denotes the weighted adjacenct matrix of $H$, then Proposition~\ref{Prop:CharPoly} implies $\operatorname{det}(A)=\pm \operatorname{det}(B_1)\operatorname{det}(B_2)$. Thanks to Temperley--Fisher and Kasteleyn's Theorem~\ref{Thm:Kasteleyn} we may rewrite the previous equations as $p^2=\pm \operatorname{det}(B_1)\operatorname{det}(B_2)$; this implies $\lvert \operatorname{det}(B_2)\rvert=p$.
\end{proof}

\begin{exam}
The graph in Figure~\ref{Fig:NeighbourCount} has the sign sequence $(+,+,-,-,+,+,-,+,-)$ which translates into a continued fraction $[2,2,2,1,1,1]=\frac{46}{19}$. On the other hand, the lower boundary of the graph has $4$ black vertices which have $3$, $2$, $4$ and $3$ neighbours, respectively, yielding the continued fraction $[[3,2,4,3]]=\frac{46}{19}$. The numerator $46$ gives us the number of perfect matchings of the graph.

On the other hand, $[[2,4,2,3,2]]=[1,1,2,2,1,2]=\frac{46}{27}$. The first expression arises when we apply Theorem~\ref{Theo:Matchings} to the the upper boundary. The second expression arises when we construct the sign sequence but declare the left edge of the first tile to be internal instead of the lower edge.
\end{exam}

\begin{rem}
Let us revisit Theorem~\ref{Theo:Matchings}. Suppose that the matrix $B_1$ is indexed by the lower boundary without loss of generality. Let $G'=(V',E')$ be the full subgraph of $G$ obtained by removing the black vertices on the upper boundary and all edges incident to them. Then the restriction of the weighting $w$ to $E'$ is a Kasteleyn weighting for $G'$ because $G'$ only has one face (the infinite face). The weighted adjacency matrix $A'$ of $G'$ is a symmetric $2\times 2$ block matrix  whose blocks are indexed by the black and white vertices of $G'$. Let $B'$ be the upper right block of $A'$, that is, $B'$ describes the adjacency between black and white vertices in $G'$. By construction $B_1=B'(B')^T$. It is possible to show that $\lvert \operatorname{det}(B_1)\rvert$ is the square of the number of perfect matchings of $G'$. To be more precise, notice that a perfect matching of $G'$ is a matching of $G$ that covers all the black vertices. Then it is possible to show that every such matching can be extended uniquely to a perfect matching of $G$.   
\end{rem}

\bigskip
\noindent
\textbf{Acknowledgments.}
\phantomsection%
\addcontentsline{toc}{section}{Acknowledgments} The article summarises an undergraduate summer research project the authors conducted at the University of Kent. J. Bradshaw received an Undergraduate Research Bursary URB 18-19 19 from the London Mathematical Society and the School of Mathematics, Statistics and Actuarial Science. P. Lampe was supported by EPSRC grant EP/M004333/1. D. Ziga is grateful to the University of Kent for financial support.

\bibliographystyle{hyperalphaabbr}
\bibliography{snake}

\end{document}